\newtheorem{theorem}{Theorem}
\newtheorem{corollary}{Corollary}
\newtheorem{lemma}{Lemma}
\newtheorem{proposition}{Proposition}
\newtheorem{remark}{Remark}
\renewcommand{\H}{\mathbb{H}}
\newcommand{\E}{\mathbb{E}}
\newcommand{\R}{\mathbb{R}}
\newcommand{\T}{\mathbb{T}}
\newcommand{\G}{\mathbb{G}}
\newcommand{\mF}{\mathcal{F}}
\newcommand{\supp}{\text{supp}}
\newcommand{\eps}{\varepsilon}
\newenvironment{enumerate*}%
\begin{document}

\begin{frontmatter}
\title{Nonparametric statistical inference for drift vector fields of multi-dimensional diffusions}
\runtitle{Nonparametric inference for diffusions}

\begin{aug}
\author{\fnms{Richard} \snm{Nickl}\ead[label=e1]{r.nickl@statslab.cam.ac.uk}}
\and
\author{\fnms{Kolyan} \snm{Ray}\ead[label=e2]{kolyan.ray@kcl.ac.uk}}

\runauthor{R. Nickl and K. Ray}

\affiliation{University of Cambridge and King's College London}

\address{Statistical Laboratory\\
Department of Pure Mathematics\\
\quad and Mathematical Statistics\\ 
University of Cambridge\\
CB3 0WB Cambridge\\
United Kingdom\\
\printead{e1}}

\address{Department of Mathematics\\
King's College London\\
Strand\\
London WC2R 2LS\\
United Kingdom\\
\printead{e2}}

\end{aug}

\begin{abstract}
The problem of determining a periodic Lipschitz vector field $b=(b_1, \dots, b_d)$ from an observed trajectory of the solution $(X_t: 0 \le t \le T)$ of the multi-dimensional stochastic differential equation
\begin{equation*}
dX_t = b(X_t)dt + dW_t, \quad t \geq 0,
\end{equation*}
where $W_t$ is a standard $d$-dimensional Brownian motion, is considered. Convergence rates of a penalised least squares estimator, which equals the maximum a posteriori (MAP) estimate corresponding to a high-dimensional Gaussian product prior, are derived. These results are deduced from corresponding contraction rates for the associated posterior distributions. The rates obtained are optimal up to log-factors in $L^2$-loss in any dimension, and also for supremum norm loss when $d \le 4$.  Further, when $d \le 3$, nonparametric Bernstein-von Mises theorems are proved for the posterior distributions of $b$. From this we deduce functional central limit theorems for the implied estimators of the invariant measure $\mu_b$. The limiting Gaussian process distributions have a covariance structure that is asymptotically optimal from an information-theoretic point of view.
\end{abstract}

\begin{keyword}[class=MSC]
\kwd[Primary ]{62G20}
\kwd[; secondary ]{62F15}
\kwd{65N21}
\end{keyword}

\begin{keyword}
\kwd{penalised least squares estimator}
\kwd{asymptotics of nonparametric Bayes procedures}
\kwd{Bernstein-von Mises theorem}
\kwd{uncertainty quantification}
\end{keyword}

\end{frontmatter}

\tableofcontents

\section{Introduction}

For $W_t= (W_t^1, \dots, W_t^d)$ a $d$-dimensional Brownian motion and $b=(b_1, \dots, b_d)$ a Lipschitz vector field, consider the multi-dimensional Markov diffusion process $(X_t=(X_t^1, \dots, X_t^d): t \ge 0)$ describing the solution to the stochastic differential equation (SDE) 
\begin{equation} \label{eq:SDE}
dX_t= b(X_t)dt + dW_t, \quad \quad X_0 = x_0\in \R^d,~  t \geq 0.
\end{equation}
The random process $(X_t: t \ge 0)$ describes a Brownian motion whose trajectories are subject to spatially variable displacements enforced by the drift vector field $b$. We are interested in recovering the parameter $b$ based on observing the process up to time $T$. A closely related problem is that of estimating the invariant measure $\mu_b$ of the diffusion, which in the \textit{ergodic} case   describes the probabilities 
\begin{equation} \label{erginv}
\mu_b(A) =^{a.s.} \lim_{T \to \infty} \frac{1}{T}\int_0^T 1_A(X_t)dt
\end{equation}
corresponding to the average asymptotic time spent by the process $(X_t)$ in a given measurable subset $A$ of the state space. 

While the one-dimensional case $d=1$ is well studied (e.g., \cite{K04}, \cite{D05, DR06, vdmeulen2006, LLL11, pokern2013, vanwaaij2016,A18, AS18b, AS18a}), comparably little is known about the important multi-dimensional setting, particularly when $b$ is modelled in a nonparametric or high-dimensional way. In the measurement model we consider here, Dalalyan and Rei\ss~\cite{DR07} first obtained convergence rates of multivariate nonparametric kernel-type estimators. Schmisser \cite{S13} established adaptive $L^2$-convergence rates of certain model selection based projection estimators and Strauch \cite{S15,S16, S18} obtained adaptive convergence rate results for $b$ in pointwise and $L^2$-loss, and for $\mu_b$ in $\|\cdot\|_\infty$-loss -- more discussion can be found below.


For observations $(X_t: 0 \le t \le T)$, the likelihood function is directly available from Girsanov's theorem and has a convenient `Gaussian' form in the parameter $b$. This motivates the use of likelihood based inference procedures: the estimators $\hat b_T$ for $b$ we study in the present paper are minimisers of a penalised likelihood (or least squares) criterion over a high-dimensional approximation space. In fact, since the penalties we use are squared Hilbert norms, $\hat b_T$ equals a Bayesian `maximum a posteriori' (MAP) estimate arising from a truncated Gaussian series prior. The Bayesian interpretation of $\hat b_T$ is exploited in our proofs and has further appeal since it directly suggests uncertainty quantification methodology (`posterior credible sets'). In particular, posterior sampling is feasible even for `real-world' discrete data by simulation techniques, see \cite{BPRF06, PPRS12, BFS16, vdMS17, vdMS17b} and references therein.  


Let us briefly describe our contributions: we obtain convergence rates of $\hat b_T$ to the `true' vector field $b_0$ generating equation (\ref{eq:SDE}) and also frequentist contraction rates about $b_0$ for the corresponding posterior distributions, both in $L^2$- and $\|\cdot\|_\infty$-distances. For $L^2$-loss the rates obtained are minimax optimal (up to log-factors) over H\"older classes in any dimension, and this remains true for $\|\cdot\|_\infty$-loss whenever dimension $d \le 4$. When $d \le 3$, we further prove nonparametric Bernstein-von Mises theorems that establish asymptotic normality of the re-centred and scaled posterior distributions $\sqrt T(b-\hat b_T)|(X_t: 0 \le t \le T)$ in a (large enough) function space.  From this we deduce central limit theorems for the implied plug-in estimators for the invariant density $\mu_b$. The proofs imply that the limiting covariances obtained coincide with the semiparametric information lower bounds for these estimation problems. We exploit that the non-linear identification map $b \mapsto \mu_b$ can be shown to be `one-smoothing' -- as inference on $b$ is approximately  a nonparametric regression problem \cite{DR07}, this offers an analytical explanation for why the invariant density $\mu_b$ of the process can be estimated at $1/\sqrt T$ rate in stronger norms than is the case in i.i.d.~density estimation. 


The multi-dimensional case $d \ge 2$ is fundamentally more challenging than the one-dimensional one for various reasons. First, when $d=1$ properties of diffusion local times can be used to take advantage of regularity properties of the sample paths of $(X_t)$ as in \cite{vdmeulen2006, DR06, pokern2013, vanwaaij2016, AS18a, AS18b}, whereas for $d>1$ these local times are no longer appropriately defined. Second, Markovian concentration properties can be derived using martingale techniques combined with mapping properties of the generator of the underlying semigroup (via the Poisson equation and It\^o's formula, see Lemma \ref{lem:maxim}). In dimension one this involves the study of an explicitly solvable ordinary differential equation (ODE), whereas for $d\ge 2$ the theory of elliptic partial differential equations (PDEs) is required. PDE techniques are an effective alternative to the functional inequalities used in \cite{DR07, S16, S18}, in particular the requirement that $b$ be a gradient vector field $\nabla B$ for some $B:\mathbb R^d \to \mathbb R$, and thus of \textit{reversibility} of $(X_t)$ -- used in the references \cite{DR07, S13, S16, S18} -- can be avoided this way; neither does $X_0 \sim \mu_b$ have to be started in equilibrium as in \cite{S13}. To simplify the PDE arguments in our proofs we restrict to periodic vector fields $b$. In our setting, periodicity ensures the required mixing properties of $(X_t)$, replacing spectral gap assumptions in \cite{DR07, S16, S18}. The techniques of the present paper extend in principle, albeit at the expense of considerable technicalities, to the non-periodic case if $b$ is known outside of a compact subset of $\mathbb R^d$ and upon employing assumptions on $b$ as in \cite{PV01}. Finally, that $b$ is not required to be a gradient field is crucial in the multi-dimensional setting for the use of Bayesian (or penalisation) methods as standard Gaussian priors for $b$ will draw gradient vector fields with probability zero. Moreover, for $d>1$ the potential absence of reversibility of $(X_t)$ introduces some fundamentally new features to the inference problem at hand, since the invariant measure $\mu_b$ no longer identifies the law $P_b$ of the process $(X_t)$ -- see after Proposition \ref{as:true_drift} below. Unlike in the one-dimensional case (e.g., \cite{NS17a}, \cite{A18}), Bayesian inference thus cannot be based on a prior assigned directly to the invariant measure $\mu_b$. In contrast we show how Gaussian priors for $b$ give valid Bayesian models for the data and allow one to make optimal inference on $b$ and $\mu_b$.


 Our proofs employ techniques from Bayesian non- and semi-parametric statistics, specifically \cite{CN13, CN14, castillo2014, CR15}. In this regard our results are related to recent investigations of Bayesian inverse problems \cite{KvdVvZ11, R13, ALS13, DLSV13, KS18, NvdGW18}, Bernstein-von Mises theorems \cite{N17, MNP17, NS17b}, \cite{RR12, R14, R17}, \cite{GK18} and diffusion models \cite{vdmeulen2006, pokern2013, vanwaaij2016, NS17a, vdMvZ13, GS14, A18}.

\section{Main results}

\subsection{Basic notation and definitions}

Let $\T^d$ denote the $d$-dimensional torus, isomorphic to $(0,1]^d$ if opposite points on the cube are identified.
By $L^2(\T^d)$ we denote the usual $L^2$-spaces with respect to Lebesgue measure $dx$ on $\T^d$ equipped with inner product $\langle \cdot, \cdot \rangle=\langle \cdot, \cdot \rangle_{L^2}$. Let $\mu$ be a probability measure on $\T^d$. If its Lebesgue density, also denoted by $\mu$, exists and is bounded and bounded away from zero, then an equivalent norm $\|\cdot\|_\mu$ on $L^2(\T^d)$ arises from the inner product $\langle f,g \rangle_\mu = \int fg d\mu$ for $f,g \in L^2(\T^d)$. The symbol $L^2_{0}(\T^d)$ denotes the subspace of functions $f$ for which $\int_{\T^d}f(x)dx=0$, and $L^2_\mu(\T^d)$ denotes the subspace for which $\int_{\T^d} f d\mu =0$.

We define the space $C(\T^d)=C^0(\T^d)$ of continuous functions on $\T^d$ normed by the usual supremum norm $\|\cdot\|_\infty$. For $s >0$, we denote by $C^s(\T^d)$ the usual H\"older spaces of $[s]$-times continuously differentiable functions on $\T^d$, where $[s]$ is the integer part of $s$. For $s\in \R$, let $H^s(\T^d)$ denote the usual Sobolev space of functions from $\T^d$ to $\R$ (defined by duality when $s<0$). They form the special case $p=q=2$ in the scale of Besov spaces $B^s_{pq}(\T^d), 1 \le p,q \le \infty,$ see Chapter 3 of \cite{ST87} for definitions, where it is also shown that $C^s(\T^d)$ embeds continuously into $B^s_{\infty \infty}(\T^d), s \ge 0$. When no confusion may arise, we employ the same function space notation for vector fields $f=(f_1,\dots,f_d)$. For instance $f\in H^s \equiv (H^s)^{\otimes d}$ will then mean that each $f_j \in H^s(\T^d)$ and the norm on $H^s$ is given by $\|f\|_{H^s}^2 = \sum_{j=1}^d \|f_j\|_{H^s}^2.$ We shall repeatedly use multiplication inequalities for Besov-Sobolev norms,
\begin{equation}\label{mult}
\|fg\|_{B^s_{pq}} \leq c(s,p,q, d) \|f\|_{B^s_{pq}} \|g\|_{B^s_{\infty \infty}} \le c'(s,p,q, d) \|f\|_{B^s_{pq}} \|g\|_{C^s}, s \ge 0.
\end{equation}
Starting with a periodised Daubechies' wavelet basis of $L^2(\T)$, we consider a tensor product wavelet basis of $L^2(\T^d)$ given by 
\begin{equation} \label{wavbas}
\begin{split}
\{\Phi_{l,r}:  r =0,&.., \max(0,2^{ld}-1), ~l=\{-1,0\} \cup \mathbb N \},\\
& V_J \equiv \textrm{span}(\Phi_{l,r}: r, l \le J),
\end{split}
\end{equation}
  see Section 4.3 of \cite{ginenickl2016}, where the base Daubechies wavelets are taken `$S$-regular', $S \in \mathbb N$. The dimension of $V_J$ is $O(2^{Jd})$ as $J \to \infty$, and the decay of wavelet coefficients in this basis, or equivalently the scaling of approximation errors from $L^2$-projections $P_{V_J}$ onto $V_J$, characterise the norms of the Besov spaces $B^s_{pq}(\T^d)$ and Sobolev spaces $H^s(\T^d)$ (p.370f.~in \cite{ginenickl2016}). 

If $\mu$ is a probability measure on some metric space, then $Z \sim \mu$ means that $Z$ is a random variable in that space drawn from the distribution $\mu$, also called the law $\mathcal L(Z)=\mu$ of $Z$. We write $Z_T \to^d Z$, or $Z_T \to^d \mathcal L(Z)$ when no confusion can arise, to denote the usual notion of weak convergence of the laws $\mathcal L(Z_T) \to \mathcal L(Z)$ as $T \to \infty$, see, e.g., Chapter 11 in \cite{D02}. 

For a normed linear space $(X, \|\cdot\|_X)$,  the topological dual space is $$X^* = (X,\|\cdot\|_X)^* := \{L: X \to \mathbb R \text{ linear s.t. } |L(x)| \le C\|x\|_X ~ \forall x \in X, C>0\},$$ which is a Banach space for the norm $\|L\|_{X^*} \equiv \sup_{x \in X, \|x\|_X \le 1} |L(x)|.$ We will sometimes use the symbols $\lesssim, \gtrsim, \simeq$ to denote one- or two-sided inequalities up to multiplicative constants that may either be universal or `fixed' in the context where the symbols appear. We also write $(\cdot)_+= \max(\cdot,0)$ to denote the non-negative part of a real number, and $a \vee b,  a\wedge b$ to denote maximum and minimum of real numbers $a,b$, respectively.

\subsection{Diffusions with periodic drift; likelihood, prior and posterior}

Consider the SDE (\ref{eq:SDE}) where the vector field $b:\R^d \mapsto \R^d$ is Lipschitz continuous and one-periodic, that is $b(\cdot+m)=b(\cdot)$ for every $m\in \mathbb{Z}^d$.  Then a strong pathwise solution of this SDE exists which is a $d$-dimensional diffusion Markov process $X_t=(X_t^1,\dots,X_t^d)$. We denote by $P_b=P_b^x$ the cylindrical probability measure describing the law of $(X_t)$ in path space $C([0, \infty) \to \mathbb R^d)$ when $X_0=x$; its restriction $P_b^T=P_b^{T,x}$ to the separable space $C([0,T] \to \mathbb R^d)$ describes the law of the process $X^T\equiv(X_t:t\in[0,T])$ until time $T$, see, e.g., Sections 24 and 39 in \cite{B11}. We suppress the dependence on the starting value $x$ as our results do not depend on it. 

We seek to recover the drift function $b: \T^d \to \mathbb R^d$ from an observed trajectory $X^T$. The periodic model (which has also been used in \cite{pokern2013, vanwaaij2016} when $d=1$) is convenient in our context as it effectively confines the diffusion process $(X_t)$ to a bounded state space $\T^d$. To be precise, while our diffusion takes values in the whole of $\R^d$ (in particular $(X_t)$ will not be globally recurrent), the values of the process $(X_t)$ modulo $\mathbb Z^d$ contain all relevant statistical information. In particular, we have (arguing as in the proof of Lemma \ref{lem:LAN_expansion} below),
 \begin{equation*}\label{erglln}
\frac{1}{T}\int_0^T \varphi(X_t)dt \to^{P_b} \int_{\T^d} \varphi d\mu_b \text{ as } T \to \infty,~~ \forall \varphi \in C(\T^d),
\end{equation*}
where $\mu_b$ is a uniquely defined probability measure on $\T^d$ and where we identify $\varphi$ with its periodic extension to $\R^d$ on the left-hand side. The measure $\mu_b$ has the usual probabilistic interpretation as an invariant measure appearing in the limit of ergodic averages, but for our purposes it is more convenient to define it in terms of a partial differential equation involving the generator of the diffusion Markov process. Heuristically, if $(P_t = e^{tL}: t \ge 0)$ is the transition operator of a diffusion process with invariant measure $\mu$ and generator $L$, then we can differentiate the invariant identity $\int P_t [\varphi] d\mu = \int \varphi d\mu ~\forall t$ at $t=0$, so that $\int L\varphi d\mu =0$ for all smooth $\varphi$. If $L^*$ is the adjoint operator for the standard $L^2$-inner product, then it must satisfy $\int \varphi L^*\mu =0$ for all smooth $\varphi$, and hence necessarily $L^*\mu=0$ (in the weak sense), which can be used to identify $\mu$ via the adjoint generator $L^*$. 

More precisely, in our setting the generator $L:H^2(\T^d) \rightarrow L^2(\T^d)$ is 
\begin{align}\label{eq:generator}
L = L_b= \frac{1}{2} \Delta + b.\nabla = \frac{1}{2}\sum_{i=1}^d \frac{\partial^2}{\partial x_i^2} + \sum_{i=1}^d b_i(\cdot) \frac{\partial}{\partial x_i},
\end{align}
and from integration by parts the adjoint operator for $\langle \cdot, \cdot \rangle_{L^2}$ equals
\begin{equation} \label{eq:cgen}
L^* = L_b^* = \frac{1}{2}\Delta - b . \nabla - div(b),~~~div(b)=\sum_{j=1}^d \frac{\partial b_j}{\partial x_j},
\end{equation}
so that $\mu_b$ can be identified as the (weak) solution of the PDE
\begin{equation} \label{cpde}
L_b^* \mu_b \equiv \frac{1}{2} \Delta \mu_b - b . \nabla \mu_b - div(b) \mu_b = 0.
\end{equation}
One can prove the following result (see after (\ref{stark}) in Section \ref{pdeeee} below).
\begin{proposition}\label{as:true_drift}
Let $b \in C^1(\T^d)$. A unique periodic solution $\mu_b $ to (\ref{cpde}) satisfying $\int_{\T^d} d\mu_b=1$ exists. Moreover, $\mu_b$ is Lipschitz continuous and bounded away from zero on $\T^d$, with $\|1/\mu_b\|_\infty$ and the Lipschitz constant $\|\mu_b\|_{Lip}$ depending on $b$ only through a bound for $\|b\|_\infty$.
\end{proposition}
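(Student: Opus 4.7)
The plan is to combine standard elliptic PDE techniques with a Krylov--Bogolyubov averaging argument, all applied to the adjoint equation rewritten in divergence form. A direct computation gives
\[
L_b^*\mu \;=\; \tfrac12\Delta\mu - b\cdot\nabla\mu - \mathrm{div}(b)\mu \;=\; \mathrm{div}\bigl(\tfrac12\nabla\mu - b\mu\bigr),
\]
so $L_b^*\mu_b=0$ is a uniformly elliptic divergence-form PDE whose only coefficient is the bounded vector field $b$. Existence and uniqueness up to a scalar come from Fredholm theory: since $\T^d$ is a compact manifold, $L_b:H^2(\T^d)\to L^2(\T^d)$ is a Fredholm operator of index zero. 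Constants lie in $\ker L_b$, and the strong maximum principle on the compact torus forces any $\varphi$ with $L_b\varphi=0$ to be constant, so $\ker L_b=\R\cdot 1$; by the Fredholm alternative, $\dim\ker L_b^*=1$.

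To produce a positive, normalised representative I would apply Krylov--Bogolyubov: for any probability measure $\nu$ on $\T^d$, the Cesaro averages $T^{-1}\int_0^T P_t^*\nu\,dt$ are tight by compactness of $\T^d$, and any weak-$*$ cluster point $\mu_b$ satisfies $\mu_b(L_b\varphi)=0$ for all smooth $\varphi$ via a routine semigroup computation, i.e.\ $L_b^*\mu_b=0$ distributionally. Elliptic bootstrap in the divergence-form equation then produces a continuous (in fact $C^{1,\alpha}$) density, automatically nonnegative and, after the normalisation $\int\mu_b=1$, uniquely determined by the Fredholm step above. For the lower bound on $\mu_b$ I would invoke the Moser--Harnack inequality for nonnegative weak solutions of divergence-form equations with $L^\infty$ lower-order coefficients: $\sup\mu_b\le C_H\inf\mu_b$ with $C_H$ depending only on $\|b\|_\infty$ (the leading-order coefficient being fixed at $1/2$). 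Combined with $\int\mu_b=1$ on the unit-volume torus (forcing $\inf\le 1\le\sup$), this yields $\inf\mu_b\ge 1/C_H$, hence $\|1/\mu_b\|_\infty\le C_H$, with the claimed dependence on $b$.

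For the Lipschitz bound I would treat $\tfrac12\Delta\mu_b=\mathrm{div}(b\mu_b)$ as a Poisson equation on $\T^d$ and apply Calder\'on--Zygmund bounds for the Riesz-type kernel $\nabla\otimes\nabla(-\Delta)^{-1}$ to get $\|\nabla\mu_b\|_{L^p}\le C_p\|b\|_\infty\|\mu_b\|_{L^p}$ for each $p<\infty$, then bootstrap into $W^{2,p}$ and use Sobolev embedding $W^{2,p}\hookrightarrow C^{0,1}$ for $p>d$. The main obstacle I anticipate is arranging that the resulting Lipschitz constant depends on $b$ only through $\|b\|_\infty$, since a naive Schauder bootstrap imports higher H\"older norms of $b$ and Calder\'on--Zygmund fails at the $L^\infty$ endpoint. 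Overcoming this requires exploiting the special Fokker--Planck structure: the vector field $V:=\tfrac12\nabla\mu_b-b\mu_b$ is divergence-free, so $\nabla\mu_b=2b\mu_b+2V$ with $V$ reconstructible from $b\mu_b$ via the Leray/Helmholtz decomposition on $\T^d$; combined with the uniform Harnack bound $\|\mu_b\|_\infty\le C_H$, this should give a gradient estimate depending on $b$ only through $\|b\|_\infty$, completing the proposition.
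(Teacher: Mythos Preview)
Your Fredholm argument for existence and uniqueness matches the paper's approach in its appendix (Section~\ref{pdeeee}) and is correct. Your route to the lower bound via the Moser--Harnack inequality for the divergence-form equation $\mathrm{div}(\tfrac12\nabla\mu-b\mu)=0$ is also valid and gives the dependence on $\|b\|_\infty$ only; the paper instead cites heat kernel estimates from Norris for this step, so here you take a different but equally legitimate path.

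The Lipschitz bound, however, has a real gap in your argument. Your own Helmholtz remedy runs into precisely the obstruction you already identified: writing $\nabla\mu_b=2b\mu_b+2V$ with $V=-\mathbb{P}(b\mu_b)$ divergence-free expresses $\nabla\mu_b$ as $2(I-\mathbb{P})(b\mu_b)=2\nabla\Delta^{-1}\mathrm{div}(b\mu_b)$, and $I-\mathbb{P}$ is exactly a Riesz-type Calder\'on--Zygmund operator, hence not bounded on $L^\infty$. So ``$V$ reconstructible from $b\mu_b$ via Leray'' does not yield an $L^\infty$ bound on $V$ or on $\nabla\mu_b$ from $\|b\mu_b\|_\infty$ alone; one only gets $\nabla\mu_b\in\mathrm{BMO}$, which is the standard logarithmic failure. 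Bootstrapping through $W^{2,p}$ likewise fails because $\tfrac12\Delta\mu_b=\mathrm{div}(b\mu_b)=\mu_b\,\mathrm{div}(b)+b\cdot\nabla\mu_b$ imports $\mathrm{div}(b)$.

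The paper circumvents this by going parabolic: using the invariance identity $\mu_b(y)=\int p_t(x,y)\mu_b(x)\,dx$ at a fixed positive time together with heat kernel gradient estimates $|\nabla_y p_t(x,y)|\le C(t,d,\|b\|_\infty)$ uniformly in $x$ (as in Norris, p.~167f.), one immediately obtains $\|\nabla\mu_b\|_\infty\le C(d,\|b\|_\infty)$. This is the missing ingredient; if you want a purely elliptic proof with dependence only on $\|b\|_\infty$, you would need something beyond standard Calder\'on--Zygmund at the endpoint, and the heat kernel route is the efficient way to get it.
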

One may show (e.g., as after (\ref{chain}) below) that for smoother vector fields $b$ the resulting invariant measure actually equals a classical $C^2$-solution of (\ref{cpde}), but for existence of $\mu_b$ a weak solution suffices.

If $b$ arises as a gradient vector field $\nabla B$ for some $B \in C^2(\T^d)$, one can check directly that $\mu_b \propto e^{2B}$ is a classical solution of (\ref{cpde}), and we can then recover $b$ from $\mu_b$ via $b=(1/2)\nabla \log \mu_b$. But the invariant measure $\mu_b$ does \textit{not} identify $b$ or the law $P_b$ of $(X_t: t \ge 0)$ for general vector fields $b$ (unless $d=1$). To see this, start with  $b=\nabla B$ and invariant measure $\mu_b \propto e^{2B}$. For any smooth \textit{divergence free} vector field $\bar v$ and $v=\bar v/\mu$ (so that $div(v\mu)=0$) one checks by integration by parts that $\int \phi L^*_{b+v} \mu_b=\int \mu_b L_{b+v}\phi=0$ for all smooth $\phi$, and as a consequence $\mu_b$ is also the invariant measure for $L_{b+v}$. Thus any statistical approach to recover $b$ via first estimating $\mu_b$ is bound to fail in our general setting. 

We instead propose likelihood-based inference methods. The log-likelihood function $\ell_T(b)$ of our measurement model can be obtained from Girsanov's theorem (Section 17.7 in \cite{B11}): for any periodic and Lipschitz $b: \T^d \to \mathbb R^d$,
\begin{equation}\label{eq:likelihood}
e^{\ell_T(b)} = \frac{dP_b^T}{dP_0^T}(X^T) = \exp \Big( -\frac{1}{2} \int_0^T \|b(X_t)\|^2 dt + \int_0^T b(X_t).dX_t \Big),
\end{equation}
where $P_0^T$ is the law of a  $d$-dimensional Brownian motion $(W_t:t\in[0,T])$.

Our approach to inference on $b$ amounts to computing a penalised maximum likelihood estimator over a high-dimensional wavelet approximation space. More precisely, set
\begin{equation} \label{mapest}
\hat b_T =  \hat b(X^T) = \text{argmin}_{b \in V_J^{\otimes d}} \big[-\ell_T(b) + \frac{1}{2}\|b\|_{\mathbb H}^2\big],
\end{equation}
where $V_J^{\otimes d}=\otimes_{j=1}^d V_J$ (cf.~(\ref{wavbas})) and $\|\cdot\|_\mathbb H$ is a Hilbert tensor norm on $V_J^{\otimes d}$. The estimator $\hat b_T$ has a natural Bayesian interpretation as the maximum a posteriori (MAP) estimate arising from a mean zero Gaussian prior $\Pi = \otimes_{j=1}^d \Pi_j$ on $V_J^{\otimes d}$ with reproducing kernel Hilbert space $\mathbb H$. Indeed, the posterior distribution $\Pi(\cdot|X^T)$ arising from observing $X^T \sim P_b^T$ is of the form
\begin{equation} \label{postform}
d\Pi(b|X^T) = \frac{e^{\ell_T(b)}d\Pi(b)}{\int e^{\ell_T(b)}d\Pi(b)} \propto e^{\ell_T(b) - \frac{1}{2}\|b\|_{\mathbb H}^2}, ~~b \in V_J^{\otimes d}.
\end{equation}
Our proofs imply that the denominator in the last expression is finite and non-zero with probability approaching one under the law of $X^T$ as $T \to \infty$. The map $(b,c) \mapsto \int_0^T b(X_t)c(X_t)dt+\langle b, c\rangle_\mathbb H$ induces an inverse covariance $D^{-1}_H$ on some linear subspace $H \subset V_J^{\otimes d}$. [Since $1 \in V_J^{\otimes d}$, $dim H \neq 0$, and our proofs imply in fact that $H=V_J^{\otimes d}$ with probability approaching one as $T \to \infty$.] By characterisations of Gaussian laws (e.g., Theorem 9.5.7 in \cite{D02}) and linearity of $b \mapsto \int_0^T b(X_t).dX_t$, the distribution $\Pi(\cdot|X^T)$ is thus Gaussian on $V_J^{\otimes d}$ and the MAP estimate (\ref{mapest}) equals the posterior mean $E^{\Pi}[b|X^T]$.

The Gaussian process priors $\Pi=\Pi_T$ we will use here are constructed from high-dimensional wavelet expansions for $b =(b_1,\dots,b_d)$ of the form:
\begin{equation}\label{eq:Gauss_prior_wav}
b_j = \sum_{l \le J} \sum_{r=0}^{2^{ld}-1} \sigma_l g_{l,r,j} \Phi_{l,r},~~ g_{l,r,j} \sim^{iid} \mathcal N(0, 1), \quad j=1, \dots, d,
\end{equation}
where the $\Phi_{l,r}$ form a $S$-regular periodised wavelet basis of $L^2(\mathbb T^d)$ (cf.~(\ref{wavbas})), where $J = J_T \to \infty$ as $T \to \infty$ in a way to be chosen below, and where the weights $\sigma_l$ govern the regularisation prescribed by the penalty functional. We will tacitly assume throughout that $S$ is large enough (depending on parameters $s,a,\alpha$ to be specified). We choose wavelets for convenience and $B$-spline bases, which give rise to the same MAP estimates, could have been used as well. Recall (p.75 in \cite{ginenickl2016}) that the Gaussian process \eqref{eq:Gauss_prior_wav} has reproducing kernel Hilbert space (RKHS) inner product of tensor form
\begin{align}\label{eq:RKHS_inner_prod}
\langle g_1,g_2 \rangle_{\H} = \sum_{j=1}^d \sum_{l\leq J} \sum_{r=0}^{2^{ld}-1} \sigma_l^{-2} \langle g_{1,j},\Phi_{l,r} \rangle_{L^2} \langle g_{2,j},\Phi_{l,r} \rangle_{L^2}, ~~g_1,g_2 \in V_J^{\otimes d}.
\end{align}

\subsection{Contraction rates for the posterior distribution and MAP estimate}\label{res1}

We now give results concerning the concentration of the posterior measure $\Pi(\cdot|X^T)$ around the `ground truth' vector field $b_0$ that generated $X^T$ according to the diffusion equation (\ref{eq:SDE}). This implies convergence rates of the same order of magnitude for the MAP estimate $\hat b_T$ (see Corollary \ref{MAP}). We denote the `true' invariant measure from Proposition \ref{as:true_drift} by $\mu_0=\mu_{b_0}$.

Our first theorem gives a contraction rate in the `natural distance' induced by the statistical experiment, following the general theory \cite{GvdV17, vdmeulen2006}. Initially this distance is a `random Hellinger semimetric' (see Theorem \ref{thm:contraction_general} below). In dimension $d=1$, the theory of diffusion local times can then be used to compare this metric to the standard $\|\cdot\|_{\mu_0}, \|\cdot\|_{L^2}$-distances \cite{vdmeulen2006, pokern2013, vanwaaij2016}, but when $d>1$ such local time arguments are not available. We instead exploit concentration properties of the high-dimensional random matrices induced by the Hellinger semimetric on $V_J^{\otimes d}$ (Lemma \ref{lem:bilinear}).

\begin{theorem}\label{thm:contraction}
Let $s>\max(d/2,1),d \in \mathbb N$. Suppose $b_0 \in C^{s}(\T^d) \cap H^{s}(\T^d)$. Consider the Gaussian prior $\Pi_T$ from \eqref{eq:Gauss_prior_wav} with $2^J \approx T^\frac{1}{2a+d}$ and $\sigma_l = 2^{-l(\alpha+d/2)}$ for $a>\max(d-1,1/2)$ and $0\leq \alpha \leq a$. Then for $\varepsilon_T = T^{-\frac{a\wedge s}{2a+d}}(\log T)$ and every $M_T \rightarrow \infty$, as $T\rightarrow \infty$,
\begin{align*}
\Pi_T \left( b:\|b-b_0\|_{\mu_0} \geq M_T\varepsilon_T | X^T \right) \rightarrow^{P_{b_0}} 0.
\end{align*}
In particular, if $a=s$ then $\varepsilon_T = T^{-\frac{s}{2s+d}}(\log T)$.
\end{theorem}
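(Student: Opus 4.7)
The plan is to apply the generic posterior contraction framework for diffusions of van der Meulen--van der Vaart (via Theorem~\ref{thm:contraction_general} promised later in the paper), which yields contraction in a random Hellinger-type semimetric $\tilde h_T(b, b_0)^2 \simeq \frac{1}{T}\int_0^T \|b(X_t) - b_0(X_t)\|^2 dt$ upon verifying three standard ingredients: sufficient prior mass around $b_0$ in a Kullback--Leibler neighbourhood, a sieve carrying nearly all prior mass with controlled metric entropy, and exponentially powerful tests for $\tilde h_T$-alternatives. The novel step is then to upgrade the random-metric contraction to the deterministic $\|\cdot\|_{\mu_0}$-norm claimed in the theorem.

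For the prior mass condition, Girsanov's formula (\ref{eq:likelihood}) gives $K(P_{b_0}^T, P_b^T) = \tfrac{1}{2} E_{b_0} \int_0^T \|b(X_t) - b_0(X_t)\|^2 dt \simeq (T/2)\|b - b_0\|_{\mu_0}^2$ (and analogously for the variance), so it suffices to produce a small-ball bound $\Pi(\|b - b_0\|_{\mu_0} \le \varepsilon_T) \ge e^{-cT\varepsilon_T^2}$. With $\sigma_l = 2^{-l(\alpha+d/2)}$ and $2^J \simeq T^{1/(2a+d)}$, the prior (\ref{eq:Gauss_prior_wav}) is the truncation to $V_J^{\otimes d}$ of a Gaussian series whose RKHS is equivalent to a truncated $H^{\alpha+d/2}$-ball; the standard Gaussian concentration-function calculation (cf.\ Section~11.4 of \cite{ginenickl2016}), applied to $b_0 \in H^s \cap C^s$ and optimised in $\alpha \le a$, delivers the rate $\varepsilon_T = T^{-(a \wedge s)/(2a+d)}(\log T)$. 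I would take the sieve $\mathcal B_T$ to be a Gaussian-concentration ball in $V_J^{\otimes d}$ intersected with a fixed $C^\beta$-bound; its prior complement is exponentially small by Borell--TIS, and since $\dim V_J^{\otimes d} = O(2^{Jd}) \simeq T\varepsilon_T^2$, any $\tilde h_T$-entropy bound follows trivially by dimension counting.

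Testing for the random metric $\tilde h_T$ is standard: likelihood-ratio tests in $\tilde h_T$-balls have the required exponential type I/II errors by the Gaussian structure of the increments $dX_t - b_0(X_t)dt = dW_t$, as in \cite{vdmeulen2006}. This verifies the hypotheses of Theorem~\ref{thm:contraction_general} and produces $\tilde h_T$-contraction at rate $\varepsilon_T$. The crucial remaining step is the uniform comparison
\begin{equation*}
(1 - o_{P_{b_0}}(1))\|b - b_0\|_{\mu_0}^2 \le \tilde h_T(b, b_0)^2 \le (1 + o_{P_{b_0}}(1))\|b - b_0\|_{\mu_0}^2 \quad \text{on } \mathcal B_T.
\end{equation*}
Expanding $b - b_0 = \sum_\lambda c_\lambda \Phi_\lambda$ in the wavelet basis (modulo a bias term of order $\varepsilon_T$ from $b_0 \notin V_J^{\otimes d}$), this comparison is equivalent to operator-norm concentration of the empirical Gram matrix $\hat\Sigma_T = (\frac{1}{T}\int_0^T \Phi_\lambda(X_t)\Phi_\mu(X_t)\,dt)_{\lambda,\mu}$ around its ergodic counterpart $\Sigma_{\mu_0} = (\int \Phi_\lambda \Phi_\mu\, d\mu_0)_{\lambda,\mu}$, which is precisely the content of Lemma~\ref{lem:bilinear}.

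The main technical obstacle is this matrix concentration bound uniformly over a growing dimension $2^{Jd} \simeq T^{d/(2a+d)}$: since $(X_t)$ is neither i.i.d.\ nor a martingale, classical matrix Bernstein does not apply directly. The route, delivered by Lemma~\ref{lem:maxim}, is to solve the Poisson equation $L_{b_0} u_\varphi = \varphi - \int \varphi\, d\mu_0$ using elliptic PDE theory for the adjoint generator (\ref{eq:cgen}) on the torus, and then apply It\^o's formula to represent $\tfrac{1}{T}\int_0^T \varphi(X_t)\,dt - \int \varphi \,d\mu_0$ as $\tfrac{1}{T}\int_0^T \nabla u_\varphi(X_t).dW_t$ plus a boundary term of order $1/T$. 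Combined with $L^\infty$-bounds on $u_\varphi$ and $\nabla u_\varphi$ obtained from elliptic regularity (where periodicity of $b_0$ plays a central role), the resulting martingale representation yields Bernstein-type exponential deviations sharp enough to survive a union bound over the $O(2^{2Jd})$ entries of $\hat\Sigma_T$.
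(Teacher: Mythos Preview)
Your proposal is essentially correct and tracks the paper's proof closely: invoke the general contraction framework (Theorem~\ref{thm:contraction_general}), verify the small-ball, sieve and entropy conditions via the Gaussian concentration-function machinery (the paper cites Theorem~4.5 of \cite{vdvaart2008}), and upgrade from the random Hellinger metric $h_T$ to $\|\cdot\|_{\mu_0}$ via the restricted-isometry Lemma~\ref{lem:bilinear}, whose engine is exactly the Poisson-equation/It\^o representation of Lemma~\ref{lem:maxim} that you sketch.

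The only substantive imprecision is your last sentence. An entry-wise union bound over the $O(2^{2Jd})$ entries of $\hat\Sigma_T-\Sigma_{\mu_0}$ does not by itself deliver operator-norm control: passing from a max-entry bound to an operator-norm bound costs a factor of the dimension $v_J\simeq 2^{Jd}$, and the resulting condition for the comparison to go through becomes $a>\max(3d/2-1,1)$ rather than the stated $a>\max(d-1,1/2)$. The paper does \emph{not} argue entry-wise. Instead, in the proof of Lemma~\ref{lem:bilinear} it works directly with the quadratic form $u\mapsto u^T(\hat\Gamma-\Gamma)u$ and uses the standard $\delta$-net reduction on the unit ball $\Theta=\{u:u^T\Gamma u\le 1\}$ (see (\ref{maxred})): a covering of cardinality $e^{c_0 v_J}$ reduces the supremum to a finite maximum, and at each net point one applies Lemma~\ref{lem:maxim} to the \emph{scalar} functional $f_u=g_u^2-\|g_u\|_{\mu_0}^2$, whose $d_L$-size is estimated via Lemma~\ref{lem:metric}. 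The $e^{c_0 v_J}$ union-bound cost is then absorbed into the subgaussian tail by taking $x\sim 2^{Jd/2}$, which is what produces the sharp condition $a>\max(d-1,1/2)$.
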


Since we wish to perform the primary regularization via the truncation level $J$ rather than the variance scaling $\alpha$ we have taken $0 \leq \alpha \leq a$. 

\begin{remark}[Adaptation] \normalfont
The previous theorem extends to adaptive priors, where $J$ is randomised according to a hyperprior on $\mathbb N$ of the form $\Pi(J=j)\sim \exp\{-C2^{jd}\}$, without requiring knowledge of the smoothness $s$. Given the techniques underlying Theorem \ref{thm:contraction}, the proof of such a result follows standard patterns (e.g., \cite{R13}, \cite{A18}) and is left to the reader. 
\end{remark}

From the previous theorem, and imposing slightly stronger conditions on $b_0$ and $\Pi_T$, one can obtain perturbation approximations of the  Laplace transform of $\Pi(\cdot|X^T)$ by the Laplace transform of a certain Gaussian distribution (see Proposition \ref{prop:laplace_trans}), which makes more precise `semiparametric' tools available for the analysis of the posterior distribution. Following ideas in \cite{castillo2014} (see also \cite{CN14, CR15, C17, NS17b}) we obtain  contraction results in the $\|\cdot\|_\infty$-norm.

\begin{theorem} \label{super}
Let $a\wedge s>\max(3d/2 -1,1), d \in \mathbb N$. Suppose $b_0 \in C^{s}(\T^d) \cap H^{s}(\T^d)$. Consider the Gaussian  prior $\Pi_T$ from \eqref{eq:Gauss_prior_wav} with $2^J \approx T^\frac{1}{2a+d}$ and $\sigma_l = 2^{-l(\alpha+d/2)}$ for $0\leq \alpha < a\wedge s-d/2$. Assume further that $a\leq s+1$ if $d\leq 4$ or $a \leq s+d/2-1$ if $d\geq 5$. Then for every $\delta>5/2$,
\begin{align*}
\Pi_T \Big(  b: \sum_{j=1}^d \|b_j-b_{0,j}\|_{\infty} \geq  (\log T)^\delta T^{-\frac{s\wedge[a-(d/2-2)_+]}{2a+d}} \big| X^T \Big) \rightarrow^{P_{b_0}} 0~\text{ as } T \to \infty.
\end{align*}
In particular, if $a=s, 0 \leq \alpha \leq s-d/2$ and $d \le 4$, then the convergence rate is $(\log T)^\delta T^{-\frac{s}{2s+d}}$.
\end{theorem}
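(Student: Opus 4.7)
The plan is to upgrade the $\|\cdot\|_{\mu_0}$-contraction of Theorem \ref{thm:contraction} to a $\|\cdot\|_\infty$-contraction by a multiscale wavelet argument based on the Laplace transform approximation in Proposition \ref{prop:laplace_trans}. The preliminary ingredient is that Theorem \ref{thm:contraction} in the present parameter range places posterior mass on a set of $L^2(\mu_0)$-radius $T^{-(a\wedge s)/(2a+d)}\log T$ around $b_0$, and coupling this with the prior support in $V_J^{\otimes d}$ and the smoothness of $b_0$ yields a ``posterior regularity'' statement placing $b$ in a bounded ball of some Besov space $B^\beta_{\infty\infty}(\T^d)$ with posterior probability tending to one; this input is used below to tame the high-frequency tail.

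The key analytic input is Proposition \ref{prop:laplace_trans}: for admissible vector-valued test fields $\psi$, the posterior Laplace transform $\E^\Pi\bigl[e^{\lambda\langle b-b_0,\psi\rangle_{L^2}}\,\big|\,X^T\bigr]$ is well approximated by a Gaussian Laplace transform centred at $\langle\hat b_T-b_0,\psi\rangle_{L^2}$, with variance proxy of order $T^{-1}$ times a $\mu_0$-weighted norm of a ``one-step'' correction of $\psi$ obtained from the Poisson equation $L_{b_0}u_\psi=\psi-\mu_0[\psi]$. A Chernoff bound then yields sub-Gaussian posterior tails
\begin{equation*}
\Pi\bigl(|\langle b-\hat b_T,\psi\rangle_{L^2}|\ge u\,\tau_T(\psi)\,\big|\, X^T\bigr)\le 2 e^{-u^2/2}(1+o_{P_{b_0}}(1)),
\end{equation*}
uniformly for $\psi$ in a suitable Besov-bounded family, with $\tau_T(\psi)^2\lesssim T^{-1}\|\psi\|_{\mu_0}^2$ modulo the PDE correction.

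Choose a cut-off $J^*\le J$ and split $\|b-b_0\|_\infty\le \|P_{V_{J^*}^{\otimes d}}(b-b_0)\|_\infty+\|(I-P_{V_{J^*}^{\otimes d}})(b-b_0)\|_\infty$. For the first term, apply the sub-Gaussian bound with $\psi=\Phi_{l,r}e_j$ (the $j$-th coordinate carrying a wavelet) for $l\le J^*$, take a union bound over the $O(d\,2^{J^*d})$ such indices, and use $\|\Phi_{l,r}\|_{\mu_0}\lesssim 1$ to obtain
\begin{equation*}
\|P_{V_{J^*}^{\otimes d}}(b-\hat b_T)\|_\infty\lesssim\sum_{l\le J^*}2^{ld/2}\max_{r,j}|\langle b_j-\hat b_{T,j},\Phi_{l,r}\rangle|\lesssim 2^{J^*d/2}(\log T)^{1/2}T^{-1/2}
\end{equation*}
on a set of posterior mass tending to one; adding the deterministic term $\|P_{V_{J^*}^{\otimes d}}(\hat b_T-b_0)\|_\infty$, which is of the same or smaller order by a parallel frequentist analysis, closes the low-frequency part. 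For the second term, the contribution from $b_0$ is a Besov approximation error $\lesssim 2^{-J^*s}$, while the contribution from $b\in V_J^{\otimes d}\setminus V_{J^*}^{\otimes d}$ is handled by combining the $L^2$-posterior rate from Theorem \ref{thm:contraction} with the posterior-regularity statement above and an inverse estimate; in dimensions $d\ge 5$, the sharp rate is recovered only by exploiting the two-order PDE smoothing encoded in Proposition \ref{prop:laplace_trans} and Lemma \ref{lem:bilinear}, and this is precisely where the $(d/2-2)_+$ correction arises.

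Balancing $J^*$ so that the low-frequency stochastic error $2^{J^*d/2}T^{-1/2}(\log T)^{1/2}$, the approximation bias $2^{-J^*s}$, and the high-frequency leak are all of comparable order then produces the claimed rate $(\log T)^\delta T^{-s\wedge[a-(d/2-2)_+]/(2a+d)}$. The lower bounds $0\le\alpha<a\wedge s-d/2$ and $a\wedge s>\max(3d/2-1,1)$ are precisely those needed for the Laplace-transform approximation of Proposition \ref{prop:laplace_trans} and for the posterior-regularity input, while the upper bounds $a\le s+1$ (resp.\ $a\le s+d/2-1$) keep the prior-truncation bias $\lesssim 2^{-Js}=T^{-s/(2a+d)}$ below the announced rate. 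The main obstacle is the high-frequency control just described: a naive inverse estimate yields a surplus factor of order $2^{Jd/2}$ times the $L^2$-rate, and only the elliptic regularity of the generator $L_{b_0}$ (via the Poisson equation and Lemma \ref{lem:bilinear}) recovers the correct dimension-dependent exponent.
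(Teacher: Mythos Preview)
Your general architecture---localise via the $L^2$-contraction of Theorem~\ref{thm:contraction}, then use the Laplace transform expansion of Proposition~\ref{prop:laplace_trans} to obtain sub-Gaussian control of posterior wavelet coefficients, then sum over scales---is correct in outline, but several of the key technical devices are missing or misidentified, and the route you sketch differs from the paper's in ways that matter.

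First, the paper does \emph{not} introduce an intermediate cutoff $J^*$ and does not balance low- versus high-frequency contributions. Since the prior (hence posterior) is supported on $V_J^{\otimes d}$, the only ``high-frequency'' term is the deterministic bias $\|P_{V_J}[b_{0,j}]-b_{0,j}\|_\infty\lesssim 2^{-Js}$, which is negligible. Everything else is handled at full resolution $J$ via a single maximal bound over all $\lambda\le J$, $k$, $j$; see Lemma~\ref{maxwav}(i). Your ``high-frequency leak'' term does not exist in the paper's argument.

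Second, and more importantly, you apply the Laplace transform expansion with $\psi=\Phi_{l,r}e_j$ directly. This does not work: the natural inner product in the LAN expansion is $\langle\cdot,\cdot\rangle_{\mu_0}$, so to recover $\langle b_j-b_{0,j},\Phi_{\lambda,k}\rangle_{L^2}$ one must take $\gamma=\tilde\Phi_{\lambda,k,j}=P_{V_J}[a_\lambda\Phi_{\lambda,k}/\mu_0]$ (the $\mu_0$-reweighted, projected direction) in Proposition~\ref{prop:laplace_trans}. This produces a residual ``semiparametric bias''
\[
S_T(b)=u\sqrt{T}\,a_\lambda\langle \mu_0(b_j-b_{0,j}),\Phi_{\lambda,k}/\mu_0-P_{V_J}[\Phi_{\lambda,k}/\mu_0]\rangle_{L^2},
\]
whose control (Lemma~\ref{lem:semipara_bias}) is the heart of the proof. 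It is precisely here---not in the PDE smoothing of Lemma~\ref{lem:bilinear}---that the $(d/2-2)_+$ penalty arises: for $d\ge 5$ one must take the coarser scaling $a_\lambda=2^{\lambda d/2}2^{-J(d-2)}(\log T)^{-\eta}$ to make $S_T(b)=o(|u|)$, and this choice feeds through to the final rate.

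Third, the scale-dependent weights $a_\lambda$ are the mechanism by which the envelope $|\Gamma_T|_2=O((\log T)^{-\eta})$ is kept small enough for $R_T\to 0$ in Proposition~\ref{prop:laplace_trans}(i). Without them, taking all $\Phi_{\lambda,k}$ with unit scaling would give $|\Gamma_T|_2\sim 1$ but would not allow the uniform-in-$\lambda$ control needed for the sum over scales. Your union-bound sketch over $O(2^{J^*d})$ wavelets does not address this normalisation issue. The conditions $a\le s+1$ (resp.\ $a\le s+d/2-1$) are used in Lemma~\ref{lem:semipara_bias} to control the $2^{-J(s+1+d/2)}$ (resp.\ $2^{-J(s+d-1)}$) bias term, not to control the truncation bias $2^{-Js}$ as you suggest.
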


By Gaussianity of the posterior distribution, the previous theorems translate into convergence rates of the MAP estimates from (\ref{mapest}).

\begin{corollary}\label{MAP}
Let $\hat{b}_T = E^{\Pi_T}[b|X^T]$. Under the conditions of Theorem \ref{thm:contraction},  for every $M_T \to \infty$, $$\|\hat{b}_T - b_0\|_{\mu_0} = O_{P_{b_0}} (M_T T^{-\frac{a \wedge s}{2a+d}}\log T) \quad  \text{ as } T \to \infty,$$ while under the conditions of Theorem \ref{super}, for every $\delta>5/2$, $$\|\hat{b}_T - b_0\|_{\infty} = O_{P_{b_0}} (T^{-\frac{s\wedge[a-(d/2-2)_+]}{2a+d}} (\log T)^\delta) \quad \text{as }T\rightarrow \infty.$$
\end{corollary}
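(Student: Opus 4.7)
The plan is to convert the contraction rates of Theorems~\ref{thm:contraction} and~\ref{super} into convergence rates for the MAP $\hat b_T$ by exploiting the Gaussianity of the posterior $\Pi(\cdot|X^T)$ on $V_J^{\otimes d}$, which is recorded in the paragraph following~\eqref{postform}. The key structural fact I will use is that a Gaussian law is symmetric about its mean; since $\hat b_T$ equals both the MAP estimate and the posterior mean, one has $\Pi(b \in B | X^T) = \Pi(2\hat b_T - b \in B | X^T)$ for every measurable $B \subset V_J^{\otimes d}$.

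First I would fix the norm $\|\cdot\|_X$ under consideration (either $\|\cdot\|_{\mu_0}$ or $\|\cdot\|_\infty$) and denote by $r_T$ the corresponding contraction rate from the relevant theorem. I would then argue by symmetrisation: letting $A = \{b : \|b - b_0\|_X \leq r_T\}$, the contraction theorem yields $\Pi(A | X^T) > 1/2$ on an event of $P_{b_0}$-probability tending to one, and posterior symmetry about $\hat b_T$ then gives $\Pi(2\hat b_T - A | X^T) > 1/2$ as well. Since two sets of posterior mass greater than $1/2$ cannot be disjoint, there exists $b$ with $\|b - b_0\|_X \leq r_T$ and $\|b - (2\hat b_T - b_0)\|_X \leq r_T$ simultaneously; the triangle inequality then yields $2\|\hat b_T - b_0\|_X \leq 2r_T$. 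Specialising to the two theorems with $r_T = M_T T^{-(a \wedge s)/(2a+d)}\log T$ and $r_T = (\log T)^{\delta} T^{-(s \wedge [a-(d/2-2)_+])/(2a+d)}$ respectively produces the two claimed rates.

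I do not anticipate a real obstacle: the Gaussianity of $\Pi(\cdot|X^T)$ is already established in the paper, and the symmetrisation step is elementary. The only (minor) point to keep track of is that the contraction theorems give posterior mass tending to $1$, which is comfortably stronger than the ``$>1/2$'' that the argument actually needs. A conceptual benefit of this approach is that it sidesteps any need to estimate posterior second moments (as the alternative Jensen plus Cauchy--Schwarz route would require): symmetry about the posterior mean does all the work.
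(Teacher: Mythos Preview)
Your argument is correct. It differs from the paper's proof in that the paper invokes Anderson's Lemma to show that $\hat b_T$ maximises the map $b' \mapsto \Pi_T(\|b-b'\|_X \le r_T \mid X^T)$ and then cites Theorem~2.5 of Ghosal, Ghosh and van der Vaart (2000) to conclude. Your symmetrisation argument is more elementary: you use only that a Gaussian law is symmetric about its mean (so $A$ and $2\hat b_T - A$ have equal posterior mass), together with the pigeonhole observation that two sets of mass exceeding $1/2$ must intersect. This is strictly weaker than Anderson's Lemma, which asserts that the Gaussian mass of a convex symmetric set centred at the mean dominates that of every translate, and it avoids the external citation entirely. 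The paper's route has the advantage of being a recognisable template (Anderson plus Ghosal--Ghosh--van der Vaart is standard in the contraction-rate literature), while yours is self-contained and arguably cleaner here since the posterior is exactly Gaussian rather than merely log-concave.
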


\begin{proof}
Consider the function $$H(b')  = \Pi_T (b: \|b-b'\|_{\mu_0} \leq M_T \varepsilon_T |X^T), ~b' \in V_J^{\otimes d}.$$ The posterior is a Gaussian measure on the finite-dimensional space $V_J^{\otimes d}$, centered at $\hat{b}_T$. Since $\|\cdot\|_{\mu_0}$-norm balls centred at the origin are convex symmetric sets, Anderson's Lemma (Theorem 2.4.5 of \cite{ginenickl2016}) yields that $\hat{b}_T$ is a maximizer of $H$. Using Theorem 2.5 in \cite{ghosal2000} with the contraction rate from Theorem \ref{thm:contraction}, we deduce that $\|\hat{b}_T-b_0\|_{\mu_0} = O_{P_{b_0}}(M_T \varepsilon_T)$ as $T\rightarrow \infty$. The $\|\cdot\|_\infty$-rate follows similarly using the contraction rate from Theorem \ref{super}.
\end{proof}

Up to log-factors, the $\|\cdot\|_{L^2}$-rates obtained are minimax optimal for any dimension $d$ (the lower bounds follow, e.g., via the asymptotic equivalence results in \cite{DR07}, see also \cite{S15,S16}). The $\|\cdot\|_\infty$-rates are then also optimal whenever $d \le 4$, up to log-factors. The sub-optimality of our rate for $d>5$  is related to the presence of common semiparametric `bias terms' in the approximation-theoretic Lemma \ref{lem:semipara_bias} below. 

\subsection{Bernstein-von Mises theorems for $b$}

We now adopt the framework of nonparametric Bernstein-von Mises theorems from \cite{CN13, CN14}, see also the recent contributions \cite{C17, R17, N17, NS17b, MNP17}. The idea is to obtain a Gaussian approximation for the posterior distribution in a  function space in which $1/\sqrt T$-convergence rates can be obtained. We will view the re-centred and re-scaled posterior draws  $\sqrt T(b - \hat b_T)|X^T$ as (conditionally on $X^T$) random vector fields acting linearly on test functions $\phi=(\phi_1, \dots, \phi_d)$ by integration
$$\Big(\phi \mapsto \sqrt T \int_{\T^d} (b-\hat b_T).\phi: \phi \in B^\rho_{1\infty} \big| X^T\Big),$$
and show that a Bernstein-von Mises theorem holds true uniformly in $\phi$ belonging to any bounded subset of the Besov space $B^\rho_{1\infty} ,\rho>d/2$, $d\leq 3$. Equivalently, the limit theorem holds for the probability laws induced by these stochastic processes in the `dual' Banach space $(B^\rho_{1\infty})^*$. The limit will be the tight Gaussian probability measure $\mathcal N_{b_0}$ on $(B^\rho_{1\infty})^*$ induced by the centred Gaussian white noise process $(\mathbb W_0(\phi): \phi \in B^\rho_{1\infty})$ with covariance $$E \mathbb W_0(\phi) \mathbb W_0(\phi') = \langle \phi, \phi' \rangle_{1/\mu_0} =\sum_{j=1}^d\int_{\T^d} \phi_j(x) \phi_j'(x) \mu^{-1}_0(x),~~\phi, \phi' \in B^\rho_{1\infty};$$ its existence is established in the proof of the following theorem. 

By embedding other spaces into $B^\rho_{1\infty}$ one may deduce various further limit theorems from the results below, for example in negative Sobolev spaces $H^{-\rho}=(H^{\rho})^*, \rho>d/2$. For the applications to estimation of $\mu_b$ in the next subsection, this particular choice of Besov space is, however, crucial, and restriction to the simpler scale of Sobolev spaces would be insufficient to obtain the results in Section \ref{bvmsec} below.

For two probability measures $\tau, \tau'$ on a metric space $(S,e)$, define the bounded Lipschitz (BL) metric for weak convergence (p.157 in \cite{D14}) by 
$$\beta_S(\tau,\tau')= \sup_{F: S \to \mathbb R, \|F\|_{Lip} \le 1} \left|\int_S F d(\tau-\tau') \right|,$$
$$\|F\|_{Lip} \equiv \sup_{x\in S}|F(x)| +\sup_{x\neq y, x,y \in S} \frac{|F(x)-F(y)|}{e(x,y)}.$$

\begin{theorem}\label{bvm1}
Let $1 \le d \le 3$, $\rho>d/2$, $a >\max(3d/2 -1,1)$ and let $s \ge a$ be such that $s>a-1+d/2$. Suppose $b_0 \in C^{s}(\T^d) \cap H^{s}(\T^d)$.  Let $\Pi_T$ be the Gaussian prior from (\ref{eq:Gauss_prior_wav}) with $\sigma_l = 2^{-l(\alpha+d/2)}, 0 \le \alpha < a \wedge s-d/2$ and $J$ chosen such that $2^J \approx T^{1/(2a+d)}$. Let $\tilde \Pi_T(\cdot|X^T)$ be the conditional law $\mathcal L(\sqrt T(b - \hat b_T)|X^T)$, where $b \sim \Pi_T(\cdot|X^T)$ and $\hat b_T=E^{\Pi_T}[b|X^T]$ is the posterior mean, and let $\mathcal N_{b_0}$ denote the law in $(B^\rho_{1\infty})^*$ of a centred Gaussian white noise process for $\langle \cdot, \cdot \rangle_{1/\mu_0}$. Then, as $T \to \infty$,
\begin{equation} \label{bvmdis}
\beta_{(B^\rho_{1\infty})^*}(\tilde \Pi(\cdot|X^T), \mathcal N_{b_0}) \to^{P_{b_0}} 0.
\end{equation}
\end{theorem}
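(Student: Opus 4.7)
The plan is to exploit the Gaussianity of the posterior $\Pi_T(\cdot|X^T)$ noted after \eqref{postform}, together with the general recipe for nonparametric Bernstein--von Mises theorems developed in \cite{CN13, CN14, castillo2014}. Since $\sqrt{T}(b-\hat{b}_T)|X^T$ is a centered Gaussian random element of the finite-dimensional sieve $V_J^{\otimes d}$, I would reduce weak convergence in the Banach space $(B^\rho_{1\infty})^*$ to two ingredients: (i) finite-dimensional convergence of the conditional Laplace transform
$$\mathcal L_T(\phi) := E^{\Pi_T}\Big[\exp\Big(\sqrt{T}\int_{\T^d}(b-\hat{b}_T)\cdot\phi \Big)\Big| X^T\Big] \to \exp\Big(\tfrac{1}{2}\langle\phi,\phi\rangle_{1/\mu_0}\Big)$$
in $P_{b_0}$-probability for each $\phi \in B^\rho_{1\infty}$; and (ii) asymptotic tightness of the laws in $(B^\rho_{1\infty})^*$, which together yield convergence in the bounded Lipschitz metric $\beta_{(B^\rho_{1\infty})^*}$.

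For (i), Gaussianity makes $\log \mathcal L_T(\phi)$ a quadratic form in $\phi$ determined by the posterior covariance operator $D_T = (A_T + C_\Pi^{-1})^{-1}$ on $V_J^{\otimes d}$, where $A_T$ is the empirical Gram matrix induced by $(b,c) \mapsto \int_0^T b(X_t)\cdot c(X_t)dt$ and $C_\Pi$ is the prior covariance from \eqref{eq:RKHS_inner_prod}. Using the LAN expansion of Lemma \ref{lem:LAN_expansion} together with the Laplace-transform perturbation result Proposition \ref{prop:laplace_trans}, it is enough to show $T\langle\phi,D_T\phi\rangle \to \int_{\T^d}|\phi|^2/\mu_0$. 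This decomposes into (a) the ergodic statement that $T^{-1}A_T$ concentrates around the multiplication operator by $\mu_0$, via the bilinear matrix concentration of Lemma \ref{lem:bilinear}; and (b) negligibility of the prior contribution $C_\Pi^{-1}$, which follows from $\sigma_l = 2^{-l(\alpha+d/2)}$ with $\alpha < a\wedge s - d/2$ and the choice $2^J \approx T^{1/(2a+d)}$. Tested against $\phi$, this yields $T D_T \phi \to M_{1/\mu_0}\phi$ in the appropriate sense, giving the asymptotic variance $\langle\phi,\phi\rangle_{1/\mu_0}$.

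For (ii), I would use the wavelet characterization $\|\phi\|_{B^\rho_{1\infty}} \asymp \sup_l 2^{l(\rho-d/2)}\sum_r|\langle\phi,\Phi_{l,r}\rangle_{L^2}|$ and, paralleling arguments in \cite{CN13, castillo2014, CN14}, bound the posterior linear process $\phi \mapsto \sqrt{T}\int(b-\hat{b}_T)\cdot\phi$ scale-by-scale over $l \le J$, then handle the tail $l > J$ via the decay of wavelet coefficients of elements of $B^\rho_{1\infty}$ with $\rho > d/2$. The existence of the limit $\mathcal N_{b_0}$ as a tight Gaussian Radon measure on $(B^\rho_{1\infty})^*$ follows since, for $\rho > d/2$, the unit ball of $B^\rho_{1\infty}$ is compactly embedded into $L^2$. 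The main obstacle is controlling the discrepancy $T\langle\phi,D_T\phi\rangle - \langle\phi,\phi\rangle_{1/\mu_0}$ \emph{uniformly} over $\phi$ in bounded subsets of $B^\rho_{1\infty}$. This couples the PDE-theoretic semiparametric bias analysis of Lemma \ref{lem:semipara_bias} (via the Poisson equation for $L_b^*$) with the high-dimensional matrix concentration of Lemma \ref{lem:bilinear}; it is precisely here that the restrictions $d \le 3$ and $s > a-1+d/2$ enter, as they are exactly the regimes in which the bias is $o(T^{-1/2})$ uniformly over the required class of test vector fields.
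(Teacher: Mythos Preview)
Your overall architecture—finite-dimensional Laplace transform convergence plus tightness in $(B^\rho_{1\infty})^*$—matches the paper's. However, there are two substantive gaps in your execution.

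First, in step (i) you conflate two distinct routes. The direct covariance route writes the (centred Gaussian) posterior variance of $\sqrt{T}\langle b,\phi\rangle_{L^2}$ as $T\langle P_{V_J^{\otimes d}}\phi, D_T P_{V_J^{\otimes d}}\phi\rangle$ with $D_T=(A_T+C_\Pi^{-1})^{-1}$. Lemma \ref{lem:bilinear} gives $T^{-1}A_T\approx\Gamma$ on $V_J^{\otimes d}$, where $\Gamma$ is the $\mu_0$-Gram matrix, so $TD_T\approx\Gamma^{-1}$. But $\langle P_{V_J}\phi,\Gamma^{-1}P_{V_J}\phi\rangle_{\ell^2}$ is \emph{not} automatically close to $\langle\phi,\phi\rangle_{1/\mu_0}$: the inverse of $\mu_0$-multiplication restricted to $V_J$ differs from $1/\mu_0$-multiplication restricted to $V_J$ by precisely the projection bias quantified in Lemma \ref{lem:semipara_bias}. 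You invoke that lemma only in your tightness discussion (ii), but it is already needed in (i) to identify the limiting variance. The paper avoids this matrix inversion entirely: it applies Proposition \ref{prop:laplace_trans} with the specific ``least favourable'' perturbation $\gamma=P_{V_J^{\otimes d}}[\phi/\mu_0]$, which directly produces the term $\tfrac{u^2}{2}\|\gamma\|_{\mu_0}^2\to\tfrac{u^2}{2}\|\phi\|_{1/\mu_0}^2$ in the exponent, while Lemma \ref{lem:semipara_bias} controls the residual bias $S_T(b)$ and Lemma \ref{lem:change_of_meas} handles the ratio of integrals in \eqref{lambo}. You mention Proposition \ref{prop:laplace_trans} but omit both of these follow-up steps; in particular the change-of-measure/localisation Lemma \ref{lem:change_of_meas} is never invoked.

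Second, you propose to centre at $\hat b_T=E^{\Pi_T}[b|X^T]$ from the outset. The paper instead centres at the explicit surrogate $\hat G_J(\phi)=\langle b_0,\phi\rangle_{L^2}+T^{-1}\int_0^T P_{V_J^{\otimes d}}[\phi/\mu_0](X_t)\cdot dW_t$, proves the Bernstein--von Mises statement for this centring, and only at the end deduces $\sqrt{T}(\hat b_T-\hat G_J)=o_{P_{b_0}}(1)$ in $(B^\rho_{1\infty})^*$ from convergence of (Gaussian) moments. Working with $\hat G_J$ is what allows the scale-by-scale tightness bounds to be expressed as maximal inequalities for $\sqrt{T}\langle b_j-b_{0,j},\Phi_{\lambda,k}\rangle_{L^2}$, which is exactly the content of Lemma \ref{maxwav}(ii)—the engine of the paper's tightness argument that you do not cite. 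Without this intermediate centring, your step (ii) would require direct control of $b-\hat b_T$ in the dual norm, hence separate control of $\hat b_T-b_0$, making the argument circular.
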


As in related situations in \cite{CN13, N17},  the condition $\rho>d/2$ cannot be relaxed as otherwise the limiting process does not exist as a tight probability measure in $(B^\rho_{1\infty})^*$. Also the choices $p=1, q=\infty$ are maximal for Besov spaces. From convergence of moments in (\ref{bvmdis}) we deduce the following.

\begin{theorem}\label{mapas}
Under the conditions of the previous theorem, the MAP estimate $\hat b_T = E^\Pi[b|X^T]$ satisfies, as $T \to \infty$, 
$$\sqrt T (\hat b_T - b_0) \to^d \mathcal N_{b_0} \text{ in } (B^\rho_{1\infty})^*.$$
\end{theorem}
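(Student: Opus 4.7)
\emph{Plan.} The goal is to upgrade the BvM convergence (\ref{bvmdis}), which concerns the posterior \emph{centered at} $\hat b_T$, into a frequentist CLT for the deterministic–given–data shift $\sqrt T(\hat b_T - b_0)$ under $P_{b_0}$. I would combine Theorem \ref{bvm1} with a direct LAN-type linearization of $\hat b_T - b_0$ derived from the first-order conditions defining the MAP in (\ref{mapest}), followed by an application of the martingale CLT and a tightness argument inherited from the BvM.

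\emph{Step 1 (LAN expansion of the MAP).} Taking the G\^ateaux derivative of $b \mapsto -\ell_T(b) + \tfrac12 \|b\|_{\mathbb H}^2$ at $\hat b_T$ in a direction $\phi \in V_J^{\otimes d}$ and substituting $dX_t = b_0(X_t)\,dt + dW_t$ yields the identity
\[
\int_0^T (b_0 - \hat b_T)(X_t) \cdot \phi(X_t)\,dt + \int_0^T \phi(X_t) \cdot dW_t \;=\; \langle \hat b_T, \phi \rangle_{\mathbb H}.
\]
Replacing the first time integral by $T \langle b_0 - \hat b_T, \phi\rangle_{\mu_0}$ via the quantitative ergodic estimate of Lemma \ref{lem:maxim} (resolving the generator through the Poisson equation for $L_{b_0}$), controlling the resulting non-linear remainder using the $\|\cdot\|_\infty$-contraction rate of Theorem \ref{super}, and bounding the penalty bias $\langle \hat b_T, \phi\rangle_{\mathbb H}$ by $o_{P_{b_0}}(\sqrt T)$ uniformly in $\phi$ (exploiting the smoothness gap $s > a - 1 + d/2$ and prior scaling of Theorem \ref{bvm1}), one arrives, after replacing $\phi$ by $\phi/\mu_0$ (admissible since $\mu_0$ is smooth and bounded below, by Proposition \ref{as:true_drift}), at the LAN-type expansion
\[
\sqrt T\, \langle \hat b_T - b_0, \phi\rangle_{L^2} \;=\; \frac{1}{\sqrt T}\int_0^T (\phi/\mu_0)(X_t) \cdot dW_t + r_T(\phi),
\]
with $\sup_{\|\phi\|_{B^\rho_{1\infty}} \le 1} |r_T(\phi)| \to^{P_{b_0}} 0$. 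The same linearization can be extracted from the Laplace transform asymptotics of Proposition \ref{prop:laplace_trans} by identifying the coefficient linear in the test direction.

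\emph{Step 2 (Martingale CLT and tightness).} The stochastic integral $T^{-1/2}\int_0^T (\phi/\mu_0)(X_t)\cdot dW_t$ is a continuous $P_{b_0}$-martingale whose quadratic variation $T^{-1}\int_0^T \|(\phi/\mu_0)(X_t)\|^2\,dt$ converges in $P_{b_0}$-probability to $\|\phi\|_{1/\mu_0}^2$ by the ergodic theorem. The martingale CLT then delivers finite-dimensional convergence of $\sqrt T(\hat b_T - b_0)$ to the centred Gaussian white noise $\mathbb W_0$ with covariance $\langle\cdot,\cdot\rangle_{1/\mu_0}$. For tightness in $(B^\rho_{1\infty})^*$, observe that $\mathcal L(\sqrt T(b - b_0)|X^T)$ is the translate of $\tilde\Pi(\cdot|X^T)$ by the deterministic shift $\sqrt T(\hat b_T - b_0)$; since $\tilde\Pi(\cdot|X^T)$ is BL-close to the tight Gaussian limit $\mathcal N_{b_0}$ by Theorem \ref{bvm1} and the uniform remainder $r_T$ from Step 1 is negligible in the dual norm, the shift sequence is automatically tight in $(B^\rho_{1\infty})^*$. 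Combining finite-dimensional convergence with tightness gives weak convergence to $\mathcal N_{b_0}$ in $(B^\rho_{1\infty})^*$.

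\emph{Main obstacle.} The principal difficulty is establishing \emph{uniform} control of the remainder $r_T(\phi)$ over the unit ball of $B^\rho_{1\infty}$ rather than for individual test functions, as required for convergence in the Banach space $(B^\rho_{1\infty})^*$. This needs a quantitative ergodic theorem uniform over this class (via elliptic PDE estimates for the Poisson equation for $L_{b_0}$, cf.~Lemma \ref{lem:maxim}), and a careful treatment of the RKHS-penalty contribution $\langle \hat b_T,\phi\rangle_{\mathbb H}$, which exploits the full force of the smoothness gap between the prior regularization (\ref{eq:Gauss_prior_wav}) and the assumed regularity of $b_0$ in Theorem \ref{bvm1}.
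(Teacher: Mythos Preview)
Your approach is correct in outline but takes a genuinely different route from the paper. The paper does not analyse the first-order conditions of the MAP at all. Instead, it first proves the BvM (Theorem \ref{bvm1}) with an \emph{auxiliary} centering $\hat G_J(\eta) = \langle b_0,\eta\rangle_{L^2} + T^{-1}\int_0^T P_{V_J^{\otimes d}}[\eta/\mu_0](X_t)\cdot dW_t$, and then exploits that the posterior is \emph{exactly} Gaussian on $V_J^{\otimes d}$: weak convergence in probability of Gaussian laws implies convergence of their Bochner means, and since $\mathcal N_{b_0}$ has mean zero this yields $\sqrt T(\hat b_T - \hat G_J)=o_{P_{b_0}}(1)$ in $(B^\rho_{1\infty})^*$ (equation (\ref{asylin})) essentially for free. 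Theorem \ref{mapas} then reduces to a direct CLT for $\sqrt T(\hat G_J - b_0)$, whose finite-dimensional and tightness ingredients are recycled from the BvM proof itself.

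Your route derives the linearization $\hat b_T \approx \hat G_J$ from the MAP score equation and hence must establish, by hand, uniform control over $\|\phi\|_{B^\rho_{1\infty}}\le 1$ of (i) the empirical-process remainder from the ergodic replacement, (ii) the RKHS bias $\langle \hat b_T,\phi\rangle_{\mathbb H}$, and (iii) the projection error from substituting $P_{V_J}[\phi/\mu_0]$ for $\phi/\mu_0$ --- exactly the work that Proposition \ref{prop:laplace_trans} and Lemma \ref{lem:semipara_bias} do inside the BvM proof. This is viable (and closer in spirit to classical M-estimation arguments), but it duplicates machinery. Note also that your tightness argument in Step 2 is slightly circular: once the uniform bound on $r_T$ from Step 1 is in hand, tightness of $\sqrt T(\hat b_T - b_0)$ follows directly from subgaussianity of the martingale term and needs no appeal to Theorem \ref{bvm1}. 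What the paper's argument buys is that the delicate uniform remainder control is never needed for the MAP separately --- Gaussianity of the posterior converts the BvM into the frequentist CLT by a soft moment argument.
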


A confidence set for $b$ can now be constructed by using the posterior quantiles to create a multiscale ball around $\hat b_T$, which can be further intersected with smoothness information as in \cite{CN13, CN14} to obtain confidence bands that are valid and near-optimal also in $\|\cdot\|_\infty$-diameter.

As remarked at the end of Section \ref{res1}, the presence of semi-parametric bias terms prevents our proof from giving a Bernstein-von Mises theorem when $d \ge 4$, and also necessitates $s>a-1+d/2$ in Theorem \ref{bvm1}. Unlike in Theorem \ref{super}, the case $d=4$ is excluded as we need to suppress $\log T$-factors to obtain precise limit distributions. Similar phenomena occur in nonparametric smoothing (e.g. Section 3.6 in \cite{GN09}).

\subsection{Bayesian inference on the invariant measure}\label{bvmsec}

We now turn to the problem of making inference on the invariant measure $\mu_{b}$. Frequentist estimators of $\mu_b$ can be suggested directly based on (\ref{erginv}), e.g., \cite{S18}. For the Bayesian statistician, modelling $\mu_b$ directly by a prior is not coherent since $\mu_b$ does not identify the law $P_b^T$ generating the likelihood (\ref{eq:likelihood}) (cf.~the discussion after Proposition \ref{as:true_drift}). Instead, given the MAP estimate $\hat b_T$, we can (numerically) solve (\ref{cpde}) to obtain a point estimate $\mu_{\hat b_T}$. For uncertainty quantification we can generate posterior samples $\mu_{b}|X^T$ from $b \sim \Pi_T(\cdot|X^T)$. Although numerical solvers for elliptic PDEs such as (\ref{cpde}) are available, this algorithm may be computationally expensive. Nonetheless, it gives a principled Bayesian approach to inference on $\mu_b$ that, as the results in this section show, is optimal from an information theoretic point of view.

For the formulation of the following general result, we define spaces $$\mathbb B_r = B^r_{1\infty}(\T^d) \cap L^2(\T^d), \quad r>0,~d\leq 3,$$ normed by $\|\cdot\|_{L^2}  + \|\cdot\|_{B^r_{1\infty}}$; as before the conditional laws $\mathcal L(\sqrt T(\mu_b-\mu_{\hat b_T})|X^T)$ induce stochastic processes in the normed dual space $\mathbb B^*_r$ via actions $$g \mapsto \sqrt T \int_{\T^d} (\mu_b-\mu_{\hat b_T}) g, \quad ~g \in \mathbb B_r,$$ and weak convergence occurs in $\mathbb B^*_r$. We note that the inverse $L_{b_0}^{-1}$ of the generator $L_{b_0}$ from (\ref{eq:generator}) exists as a well-defined mapping from $L^2_{\mu_0}(\T^d)$ into $H^2(\T^d)\cap L^2_0(\T^d)$, see Lemma \ref{regest} in Section \ref{pdeeee}. We  postpone the special case $d=1$ to Theorem \ref{invaclt} below.

\begin{theorem}\label{invabvm}
Let $d=2,3$ and $r>d/2-1$. Under the conditions of Theorem \ref{bvm1}, if $\mu_b, \mu_{\hat b_T}$ are the solutions of (\ref{cpde}) (invariant measures) associated with a posterior draw $b \sim \Pi_T(\cdot|X^T)$ and $\hat b_T=E^{\Pi_T}[b|X^T]$, respectively, then for $\tau(\cdot|X^T)$ the conditional law $\mathcal L(\sqrt T (\mu_{b} - \mu_{\hat b_T})|X^T)$ in $\mathbb B^*_r$ we have
$$\beta_{\mathbb B^*_r} (\tau(\cdot|X^T), \mathcal N_{\mu_{0}}) \to^{P_{b_0}} 0,~\text{ and } \sqrt T (\mu_{\hat b_T} - \mu_0) \to ^d \mathcal N_{\mu_0}~in~\mathbb B^*_r$$ as $T \to \infty,$ where $\mathcal N_{\mu_{0}}$ is the tight Borel probability measure on $\mathbb B^*_r$ induced by the centred Gaussian process $\mathbb M$ with covariance metric $$E\mathbb M(g) \mathbb M(g') = \langle \nabla L_{b_0}^{-1} [\bar g],  \nabla L_{b_0}^{-1} [\bar g'] \rangle_{\mu_{0}},~~\bar g =g - \int_{\T^d} g d\mu_{0},~~ g, g' \in \mathbb B_r.$$ 
\end{theorem}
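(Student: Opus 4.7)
The plan is to reduce the claim to the Bernstein--von Mises theorem for $b$ (Theorem \ref{bvm1}) via a linearisation of the map $b\mapsto\mu_b$ at $b_0$. Differentiating $L_b^*\mu_b=0$ in $b$ at $b_0$ identifies the derivative as $\dot\mu[h]=(L_{b_0}^*)^{-1}\mathrm{div}(h\mu_0)$, which is well defined since $\mathrm{div}(h\mu_0)$ has zero Lebesgue mean by periodicity, and subtracting $L_{b_0}^*\mu_0=0$ from $L_b^*\mu_b=0$ yields the exact identity
$$\mu_b-\mu_0 \;=\; \dot\mu[b-b_0]\;+\;(L_{b_0}^*)^{-1}\mathrm{div}\bigl((b-b_0)(\mu_b-\mu_0)\bigr).$$
Splitting $\mu_b-\mu_{\hat b_T}=\dot\mu[b-\hat b_T]+R(b-b_0)-R(\hat b_T-b_0)$ with the remainder $R(\cdot)$ genuinely quadratic in its argument, the problem reduces to (i) transferring the BVM for $\sqrt T(b-\hat b_T)$ through the linearisation $\dot\mu$, and (ii) showing $\sqrt T\,R=o_{P_{b_0}}(1)$ in $\mathbb B_r^*$.

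For (i), fix $g\in\mathbb B_r$ and set $\bar g := g-\int_{\T^d} g\,d\mu_0 \in L^2_{\mu_0}$. Since $\int\dot\mu[h]\,dx=0$, using integration by parts and the adjoint identity $\int(L_{b_0}f)\psi=\int f(L_{b_0}^*\psi)$ gives
$$\int_{\T^d} g\,\dot\mu[h]\,dx \;=\; \int_{\T^d} L_{b_0}^{-1}[\bar g]\,\mathrm{div}(h\mu_0)\,dx \;=\; -\int_{\T^d} h\cdot \Phi(g)\,dx,\qquad \Phi(g):=\mu_0\,\nabla L_{b_0}^{-1}[\bar g].$$
Extending the $L^2$-estimates of Lemma \ref{regest} to the Besov scale (the principal part $\Delta$ is an isomorphism of $B^{r+2}_{1\infty}/\mathbb{R}$ onto $B^{r}_{1\infty}/\mathbb{R}$ and the lower-order perturbation by $b_0\cdot\nabla$ with smooth $b_0$ is handled by standard Schauder/paramultiplication arguments), $L_{b_0}^{-1}$ gains two derivatives, so $\nabla L_{b_0}^{-1}[\bar g]\in B^{r+1}_{1\infty}$; since $\mu_0$ inherits higher regularity from $b_0\in C^s$ via the PDE (\ref{cpde}), multiplication by $\mu_0$ preserves $B^{r+1}_{1\infty}$ by (\ref{mult}). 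Hence $\Phi:\mathbb B_r\to B^\rho_{1\infty}$ is bounded linear with $\rho=r+1>d/2$, and its adjoint $\Phi^*:(B^\rho_{1\infty})^*\to\mathbb B_r^*$ pushes the tight Gaussian $\mathcal N_{b_0}$ forward to a tight Gaussian on $\mathbb B_r^*$ whose covariance evaluates to $\langle\Phi(g),\Phi(g')\rangle_{1/\mu_0}=\langle\nabla L_{b_0}^{-1}[\bar g],\nabla L_{b_0}^{-1}[\bar g']\rangle_{\mu_0}$, identifying $\mathcal N_{\mu_0}$.

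Because the bounded-Lipschitz distance is preserved (up to the factor $\|\Phi\|\vee 1$) under continuous linear pushforward, applying $\Phi^*$ to Theorem \ref{bvm1} yields $\beta_{\mathbb B_r^*}(\Phi^*_*\tilde\Pi_T(\cdot|X^T),\mathcal N_{\mu_0})\to^{P_{b_0}}0$, and applying it to Theorem \ref{mapas} yields $\sqrt T\,\Phi^*(\hat b_T-b_0)\to^d\mathcal N_{\mu_0}$ in $\mathbb B_r^*$. The remaining and main obstacle is the uniform remainder bound: by the same duality,
$$\|R(h)\|_{\mathbb B_r^*}\;\lesssim\;\sup_{\|g\|_{\mathbb B_r}\le 1}\|\nabla L_{b_0}^{-1}[\bar g]\|_{L^2}\,\|h\|_\infty\,\|\mu_{b_0+h}-\mu_0\|_{L^2}\;\lesssim\;\|h\|_\infty\,\|h\|_{L^2},$$
where $L^2$-ellipticity gives $\|\nabla L_{b_0}^{-1}[\bar g]\|_{L^2}\lesssim\|g\|_{L^2}\lesssim\|g\|_{\mathbb B_r}$, and the PDE identity $\mu_b-\mu_0=(L_{b_0}^*)^{-1}\mathrm{div}((b-b_0)\mu_b)$ combined with boundedness of $(L_{b_0}^*)^{-1}\mathrm{div}:L^2\to L^2$ gives $\|\mu_b-\mu_0\|_{L^2}\lesssim\|b-b_0\|_{L^2}$. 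Inserting the $L^\infty$-rate from Theorem \ref{super} and the $L^2$-rate from Theorem \ref{thm:contraction}, one finds $\sqrt T\,\|b-b_0\|_\infty\|b-b_0\|_{L^2}=O_{P_{b_0}}(T^{1/2-2a/(2a+d)}(\log T)^c)=o(1)$ provided $a>d/2$, which is ensured by $a>\max(3d/2-1,1)$ for $d\in\{2,3\}$; Corollary \ref{MAP} handles the analogous MAP-side remainder, completing both assertions.
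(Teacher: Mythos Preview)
Your proposal is correct and follows essentially the same route as the paper: linearise $b\mapsto\mu_b$ via the adjoint PDE, identify the derivative as $\dot\mu[h]=(L_{b_0}^*)^{-1}\mathrm{div}(h\mu_0)$, show $\Phi(g)=\mu_0\nabla L_{b_0}^{-1}[\bar g]$ maps $\mathbb B_r$ boundedly into $B^{r+1}_{1\infty}$ using the Besov regularity estimate (\ref{besovmap}), and then push Theorem~\ref{bvm1} (with $\rho=r+1>d/2$) through this continuous linear map.

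There is one organisational difference worth noting. The paper linearises at the moving base point $b$ via the exact decomposition (\ref{deltam}), $\mu_b-\mu_{b+h}=(L_b^*)^{-1}[f_h]+(L_{b+h}^*)^{-1}[\bar f_h]$, and must then separately argue that the linear part $(L_b^*)^{-1}[f_{h,b}]$ can be replaced by $(L_{b_0}^*)^{-1}[f_{h,b_0}]$ (the terms $A_1,A_2$). Your identity $\mu_b-\mu_0=\dot\mu[b-b_0]+(L_{b_0}^*)^{-1}\mathrm{div}\bigl((b-b_0)(\mu_b-\mu_0)\bigr)$ linearises directly at $b_0$, so subtraction immediately gives $\mu_b-\mu_{\hat b_T}=\dot\mu[b-\hat b_T]+R(b-b_0)-R(\hat b_T-b_0)$ with a remainder that already involves only $(L_{b_0}^*)^{-1}$. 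This is slightly cleaner; your remainder bound $\|R(h)\|_{\mathbb B_r^*}\lesssim\|h\|_\infty\|h\|_{L^2}$ is also marginally sharper than the paper's $\|h\|_\infty^2$, though both suffice under $a>d/2$.

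One point you glossed over: the constant in $\|\mu_b-\mu_0\|_{L^2}\lesssim\|b-b_0\|_{L^2}$ depends on $\|\mu_b\|_\infty$, which must be controlled uniformly over posterior draws. The paper handles this by first localising to $\bar D_T$ (where $\|b\|_{B^1_{\infty\infty}}=O(1)$, hence $\|\mu_b\|_{\mathrm{Lip}}=O(1)$ by Proposition~\ref{as:true_drift}) via (\ref{tvlim}); you should insert the same localisation step. Also, the paper restricts to $d/2-1<r<1$ in the continuity estimate (\ref{chain}), which is without loss since convergence in $\mathbb B_r^*$ for small $r$ implies it for larger $r$ by embedding.
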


This theorem has various corollaries, upon using the richness of the spaces $\mathbb B_r, r>d/2-1$. For instance, since $H^r$ embeds continuously into $\mathbb B^r$ on the bounded domain $\T^d$, one deduces weak convergence in $P_{b_0}$-probability of the conditional laws in the negative Sobolev spaces $H^{-r}(\T^d)=(H^r(\T^d))^*$:
\begin{equation*}
\beta_{H^{-r}} \big(\mathcal L(\sqrt T (\mu_{b} - \mu_{\hat b_T})|X^T), \mathcal N_{\mu_0}\big) \to_{T \to \infty}^{P_{b_0}} 0, ~~ r>d/2-1, ~d=2,3.
\end{equation*}

\subsubsection{Bayesian inference on invariant probabilities}Indicator functions of measurable subsets $C$ of $\T^d$ of finite perimeter define elements of $B^1_{1\infty}(\T^d)$ (proved, e.g., as in Lemma 8b, \cite{GN08}) and we can thus make inference on invariant probabilities $\mu_b(C)= \int_{\T^d} 1_C d\mu$ for $d=2,3$. Let $\mathcal C = \mathcal C_K$ be a class of Borel subsets of $(0, 1]^d$ that have perimeter bounded by a fixed constant $K$.  This includes, in particular, all convex subsets of $\T^d$ (e.g., Remark 5 in \cite{GN08}). Then the collection of functions $\{1_C: C \in \mathcal C\}$ is bounded in $B^1_{1\infty}(\T^d) \cap L^2(\T^d)$, and for the resulting set-indexed process of posterior invariant probabilities $(\mu_b(C):C \in \mathcal C), b \sim \Pi_T(\cdot|X^T),$ we deduce from Theorem \ref{invabvm} and the continuous mapping theorem 
\begin{equation*} 
\beta\big( \mathcal L(\sqrt T( \mu_b(\cdot) - \mu_{\hat b_T}(\cdot))|X^T ), \mathcal N_{\mu_0}\big) \to^{P_{b_0}} 0, ~ \sqrt T (\mu_{\hat b_T}-\mu_{0}) \to^d \mathcal N_{\mu_0} \text{ in } \ell^\infty(\mathcal C),
\end{equation*}
as $ T\to \infty$, where $\beta=\beta_{\ell^\infty(\mathcal C)}$, $\ell^\infty(\mathcal C) \supset \mathbb B_r^*$ is the Banach space of bounded functions on $\mathcal C$ (see Proposition 3.7.24 in \cite{ginenickl2016} for a precise definition of $\beta_S$ for non-separable $S$). One further deduces that the estimated invariant probabilities induced by the MAP estimate $\hat b_T$ obey the limit law
\begin{equation*}
\sqrt T \sup_{C \in \mathcal C}|\mu_{\hat b_T}(C)-\mu_{0}(C)| \to^d \sup_{C\in \mathcal C}|\mathbb M(1_C)|<\infty ~a.s., ~T\to \infty.
\end{equation*}

\subsubsection{The one-dimensional case}

We finally turn to the special case $d=1$, where the proof of Theorem \ref{invabvm} needs adaptations as then $r>d/2-1$ can be negative. We obtain a central limit theorem for the invariant probability densities $(\mu_b(x), x \in \T)$ viewed as random functions in  $C(\T)$. 

\smallskip

For $d=1$ the solution map $L_{b}^{-1}$ from before Theorem \ref{invabvm} has a representation $L_{b}^{-1}[g]=\int_\T G_b(\cdot,y)g(y)dy, g \in L^2_{\mu_b}(\T),$ with periodic Green kernel $G_{b}: \T \times \T \to \mathbb R$ such that $G_b(\cdot,x)\in H^1(\T)$ for all $x \in \T$. This follows, e.g., from directly deriving explicit expressions for the solution $v$ of the ODE $bv' + v''/2 = (2\mu_b)^{-1}(\mu_b v')'= g$, where $\mu_b \propto e^{2 B}$ and $B'=b$. 

\begin{theorem}\label{invaclt}
Under the conditions of Theorem \ref{bvm1} with $d=1, a>3/2$, if $\mu_b(x), \mu_{\hat b_T}(x), x \in \T,$ are the invariant probability density functions associated to  $b \sim \Pi(\cdot|X^T), \hat b_T=E^{\Pi_T}[b|X^T]$, respectively, then, as $T \to \infty$,
\begin{equation*} 
\beta_{C(\T)}\big( \mathcal L(\sqrt T(\mu_b - \mu_{\hat b_T})|X^T), \bar {\mathcal N}_{b_0}\big) \to^{P_{b_0}} 0,\text{ and }\sqrt T (\mu_{\hat b_T} - \mu_0) \to^d \bar {\mathcal N}_{b_0} ~in~C(\T),
\end{equation*}
where $\bar {\mathcal N}_{b_0}$ is the Borel probability law in $C(\T)$ induced by the centred Gaussian random function $(\bar {\mathbb M}(x): x \in \T)$ with covariance $$E\bar{\mathbb M}(x) \bar{\mathbb M}(x') = \int_\T \frac{d}{dy} G_{b_0}(y,x) \frac{d}{dy} G_{b_0}(y,x') d\mu_0(y) ,~~x,x' \in \T.$$ 
\end{theorem}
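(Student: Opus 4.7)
My plan follows the strategy of Theorem \ref{invabvm}: linearise $b \mapsto \mu_b$ using the PDE $L_b^* \mu_b = 0$, apply the Bernstein--von Mises Theorem \ref{bvm1} to the resulting linear semiparametric functional, and absorb the quadratic remainder via the $\|\cdot\|_\infty$-contraction rate of Theorem \ref{super}. The distinctive feature of $d=1$ is that convergence is sought in $C(\T)$ rather than in a dual Besov space, which amounts to indexing the functional CLT by the singular point-evaluation functionals $\delta_x$, $x \in \T$. For the linearisation, $L_b^* f = f''/2 - (bf)'$ so subtracting $L_{b_0}^* \mu_0 = 0$ from $L_b^* \mu_b = 0$ yields $L_{b_0}^*(\mu_b - \mu_0) = ((b - b_0)\mu_b)'$. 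For each $x \in \T$, 1D ODE theory (cf.\ the Green kernel discussion preceding Theorem \ref{invaclt}) provides $v_x \in H^1(\T)$ with $L_{b_0} v_x = \delta_x - \mu_0(x) \cdot 1$ distributionally and $\int v_x = 0$, with $v_x'(y) = \partial_y G_{b_0}(y,x)$ up to a smooth $x$-dependent correction. Pairing against $v_x$, integrating by parts, and splitting $\mu_b = \mu_0 + (\mu_b - \mu_0)$ gives the pointwise linearisation
\begin{equation*}
\mu_b(x) - \mu_0(x) = -\int_\T \partial_y G_{b_0}(y,x) (b-b_0)(y) \mu_0(y) \, dy + R_x(b,b_0),
\end{equation*}
where $R_x(b,b_0)$ collects the quadratic cross term $-\int \partial_y G_{b_0}(y,x) (b-b_0)(\mu_b - \mu_0) dy$ together with the lower-order contributions from the smooth correction in $v_x'$.

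Next, I control the remainder uniformly in $x$ and analyse the main term. Since $G_{b_0}(\cdot,x) \in H^1(\T) \hookrightarrow W^{1,1}(\T)$ uniformly in $x$, $\sup_x \|\partial_y G_{b_0}(\cdot,x)\|_{L^1} < \infty$; combined with the Lipschitz continuity $\|\mu_b - \mu_0\|_\infty \lesssim \|b-b_0\|_\infty$ from the explicit 1D Green representation, this gives $\sup_x |R_x(b,b_0)| \lesssim \|b-b_0\|_\infty^2$. Under the hypotheses ($a > 3/2$, $s \geq a$), Theorem \ref{super} yields $\|b-b_0\|_\infty = O_{P_{b_0}}(T^{-s/(2s+1)}(\log T)^\delta)$ for typical posterior draws; since $2s/(2s+1) > 1/2$ whenever $s > 1/2$, we obtain $\sqrt T \sup_x |R_x(b,b_0)| \to 0$ under the posterior, and likewise for $\hat b_T - b_0$. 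For the leading linear functional, set $\Phi_x(y) := -\mu_0(y) \partial_y G_{b_0}(y,x)$. These are smooth functions with a single jump discontinuity at $y = x$; since characteristic functions of intervals lie in $B^1_{1\infty}(\T)$ and smooth multiplication preserves this via \eqref{mult}, $\{\Phi_x : x \in \T\}$ is uniformly bounded in $B^\rho_{1\infty}(\T)$ for any $1/2 < \rho < 1$. Theorem \ref{bvm1} then applies in $(B^\rho_{1\infty})^*$ and yields joint convergence of the finite-dimensional marginals $(\sqrt T \langle \Phi_{x_i}, b-\hat b_T\rangle_{L^2})_{i\leq k}$ to a centred Gaussian vector with covariance $\langle \Phi_x,\Phi_{x'}\rangle_{1/\mu_0} = \int_\T \partial_y G_{b_0}(y,x)\partial_y G_{b_0}(y,x')\mu_0(y)\,dy$, matching the stated covariance of $\bar{\mathbb{M}}$.

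To upgrade to $C(\T)$, I use Hölder-continuity of $x \mapsto \Phi_x$ in $B^\rho_{1\infty}$: the $L^1$-bound $\|\Phi_x - \Phi_{x'}\|_{L^1} \lesssim |x-x'|$ and the uniform boundedness of $\|\Phi_x - \Phi_{x'}\|_{B^1_{1\infty}}$ combine via real interpolation to give $\|\Phi_x - \Phi_{x'}\|_{B^\rho_{1\infty}} \lesssim |x-x'|^{1-\rho}$. The evaluation map $L \mapsto (x \mapsto L(\Phi_x))$ is then continuous from $(B^\rho_{1\infty})^*$ into $C(\T)$, so pushing forward the BvM of the previous step yields tightness in $C(\T)$ of the prelimit processes and continuity of the Gaussian limit $\bar{\mathbb{M}}$; combined with the uniform remainder bound this gives the conditional CLT, while the unconditional CLT for $\sqrt T(\mu_{\hat b_T} - \mu_0)$ follows by the same linearisation together with Theorem \ref{mapas}. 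The principal obstacle is precisely this $C(\T)$-upgrade: Theorem \ref{bvm1} only delivers dual-Besov BvM, so sup-norm tightness must be engineered via uniform Besov control of the indexing family $\{\Phi_x\}_{x \in \T}$. This rests decisively on the endpoint regularity $1_{[a,b]} \in B^1_{1\infty}(\T)$, a fact specific to $d=1$ that makes the Green-kernel representation compatible with the BvM of Theorem \ref{bvm1}.
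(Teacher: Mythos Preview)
Your proposal is correct and follows the same overall architecture as the paper: linearise $b\mapsto\mu_b$ via the PDE identity, control the quadratic remainder by the posterior contraction rate, and push forward the Bernstein--von Mises theorem for $b$ through the resulting linear map. The differences lie in the execution of two steps, and it is worth noting what each route buys.

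For the remainder, you use the explicit one-dimensional fact $\|\mu_b-\mu_0\|_\infty\lesssim\|b-b_0\|_\infty$ together with $\sup_x\|\partial_y G_{b_0}(\cdot,x)\|_{L^1}<\infty$ to obtain $\sup_x|R_x|\lesssim\|b-b_0\|_\infty^2$. The paper instead keeps the abstract decomposition (\ref{deltam}) and bounds the remainder in $C(\T)$ via the Sobolev embedding $H^1(\T)\subset C(\T)$ and the interpolation $\|(L_{b+h}^*)^{-1}[\bar f_h]\|_{H^1}\lesssim\|\bar f_h\|_{L^2}^{1/2}\|\bar f_h\|_{H^{-2}}^{1/2}\lesssim\|h\|_{H^1}\|h\|_{L^2}$, which for $h\in V_J$ is $\lesssim 2^J\|h\|_{L^2}^2$. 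Your bound is more elementary and exploits the closed-form $\mu_b\propto e^{2B}$; the paper's bound is more in line with the general PDE machinery of Section~\ref{pdeeee} and avoids appealing to the explicit invariant density.

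For the $C(\T)$-upgrade, your route is to exhibit the indexing family $\Phi_x=-\mu_0\,\partial_y G_{b_0}(\cdot,x)$ explicitly, show it is uniformly bounded and H\"older in $B^\rho_{1\infty}$ for $1/2<\rho<1$ (via interpolation between the $L^1$-Lipschitz bound and uniform $B^1_{1\infty}$-boundedness of jump functions), and conclude that $L\mapsto(x\mapsto L(\Phi_x))$ is continuous $(B^\rho_{1\infty})^*\to C(\T)$. The paper instead argues at the operator level: using the sequence-space duality $\|g\|_{B^0_{\infty 1}}\lesssim\sup_{\|\phi\|_{B^0_{1\infty}}\le 1}|\langle g,\phi\rangle|$ together with (\ref{besovmap}), it shows directly that $\|v_{b_0,h}\|_\infty\lesssim\|v_{b_0,h}\|_{B^0_{\infty 1}}\lesssim\|h\|_{(B^1_{1\infty})^*}$, so the linear map $h\mapsto v_{b_0,h}$ is continuous from $(B^1_{1\infty})^*$ into $C(\T)$. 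The two arguments are dual to one another: yours verifies regularity of the test functions, the paper's verifies regularity of the solution. Your approach is perhaps more transparent about why $d=1$ is special (the jump structure of $\partial_y G_{b_0}$), while the paper's is shorter and keeps the argument parallel to the proof of Theorem~\ref{invabvm}. Incidentally, no ``smooth $x$-dependent correction'' is needed in $v_x'$: the paper's computation at the end of the proof shows $(L_{b_0}^*)^{-1}[f_{h,b_0}](x)=\int_\T\partial_y G_{b_0}(y,x)h(y)\mu_0(y)\,dy$ exactly.
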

In \cite{AS18a} an analogue of the second limit in the above theorem was obtained for an estimator based on smoothing the empirical measure $\hat \mu_T$ from (\ref{erginv}). Their proof is very different from ours and based on first establishing that their estimator is asymptotically close to the local time of the diffusion process, in conceptual analogy to the i.i.d.~setting \cite{GN09}. 

\subsubsection{Information lower bounds}
The LAN expansion of our measurement model under $P_{b_0}$ is obtained in Lemma \ref{lem:LAN_expansion} below, with LAN-inner product $\langle \cdot, \cdot \rangle_{\mu}$. Standard arguments from asymptotic semiparametric statistics (\cite{vdvaart1998}, Chapter 25) then imply that the asymptotic variance occurring in Theorems \ref{bvm1} and \ref{mapas} is optimal in an information-theoretic sense. This is also true in the case of Theorem \ref{invabvm}, where inference on a non-linear functional $\Phi_g(b)= \int_{\T^d} g d\mu_b, g \in L^2(\T^d)$, of $b$ is considered. Indeed, the expansion
$$\Phi_g(b+h) - \Phi_g(b)  = \langle \nabla L_b^{-1}[\bar g], h \rangle_{\mu_b} +o(\|h\|_\infty), \quad \bar g = g -\int_{\T^d} g d\mu_b,$$ follows from the proof of Theorem \ref{invabvm}. Thus arguing as in Section 7.5 in \cite{N17}, the information lower bound for estimating $\Phi_g(b)$ from our observations is  $$ \|\nabla L_b^{-1}[\bar g] \|_{\mu_b}^2 = \int_{\T^d}\left\|\nabla L_b^{-1}\left[g-\int_{\T^d} g d\mu_b\right](x)\right\|^2d\mu_b(x), ~\text{ any } d \ge 1.$$ Examining the proof of Theorem \ref{invaclt}, a similar remark applies to the covariance appearing in that theorem. See also \cite{AS18a} for the case $d=1$.

\section{Proofs of main results}

\subsection{Concentration of measure tools for multi-dimensional diffusions} \label{marto}

The following results provide uniform stochastic control of functionals of the diffusion process (\ref{eq:SDE}) with periodic drift $b_0$ in terms of metric entropy bounds via a metric $d_L$ involving the inverse generator $L_{b_0}^{-1}$ from Lemma \ref{regest}.

\begin{lemma} \label{lem:maxim}
Suppose $b_0 \in C^{(d/2+\kappa)\vee 1}(\T^d)$ and let $\mathcal{F}_T \subset L_{\mu_0}^2(\T^d) \cap H^{d/2+\kappa}(\T^d)$ for some $\kappa>0$ be such that $0 \in \mF_T$. Define the empirical process $$\G_T[f] = \frac{1}{\sqrt T} \int_0^T f(X_s)ds,~~f \in \mathcal F_T,$$ the pseudo-distance $d_L$ on $\mF_T$ by
\begin{align}\label{eq:metric}
d^2_L(f,g) = \sum_{i=1}^d \left\| \partial_{x_i} L_{b_0}^{-1}[f-g] \right\|_\infty^2,
\end{align}
and let $D_{\mF_T}$ be the $d_L$-diameter of $\mF_T$. Further set
$$J_{\mF_T} = J(\mF_T,6d_L,D_{\mF_T})= \int_0^{D_{\mF_T}} \sqrt{\log 2N(\mF_T,6d_L,\tau)}d\tau,$$
where $N(\mF_T,6d_L,\tau)$ denotes the covering number of the set $\mF_T$ by $d_L$-balls of radius $\tau/6$. Then
\begin{align*}
E_{b_0} \sup_{f\in \mF_T} |\G_T(f)| \leq \frac{2}{\sqrt{T}} \sup_{f\in \mF_T}\|L_{b_0}^{-1}[f]\|_\infty + 4\sqrt{2}J_{\mF_T},
\end{align*}
and for any $x>0$,
\begin{align*}
P_{b_0} \Big( \sup_{f\in \mF_T}  \left| \G_T(f) \right| \geq \sup_{f\in \mF_T}\frac{2\|L_{b_0}^{-1}[f]\|_\infty}{\sqrt{T}} + J_{\mF_T}(4\sqrt{2}+192x )  \Big) \leq e^{-x^2/2}.
\end{align*}
\end{lemma}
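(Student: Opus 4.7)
The plan is to linearise the ergodic average via the Poisson equation associated with the generator $L_{b_0}$, so that $\G_T[f]$ decomposes as a boundary term plus a stochastic integral to which sub-Gaussian chaining techniques apply. Concretely, for each $f\in\mathcal F_T$, set $U_f := L_{b_0}^{-1}[f]$. The hypothesis $f\in H^{d/2+\kappa}(\T^d)\cap L^2_{\mu_0}(\T^d)$ together with the mapping properties of $L_{b_0}^{-1}$ in Lemma \ref{regest} yields $U_f\in H^{d/2+\kappa+2}(\T^d)\hookrightarrow C^2(\T^d)$ by Sobolev embedding, so It\^o's formula applied to $U_f(X_t)$ is legitimate. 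Since $L_{b_0}U_f=f$, one obtains
\begin{equation*}
\int_0^T f(X_s)\,ds = U_f(X_T)-U_f(X_0) - \int_0^T \nabla U_f(X_s)\cdot dW_s,
\end{equation*}
and therefore
\begin{equation*}
\G_T[f] = \frac{U_f(X_T)-U_f(X_0)}{\sqrt T} - \frac{1}{\sqrt T}\int_0^T \nabla U_f(X_s)\cdot dW_s \;=:\; B_T(f) - M_T(f).
\end{equation*}
The deterministic boundary contribution is controlled pointwise by $|B_T(f)|\le 2\|U_f\|_\infty/\sqrt T$, which produces the first term in both the expectation and tail bounds.

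For the martingale part, using $\|\nabla U_{f-g}\|^2=\sum_{i=1}^d(\partial_{x_i}U_{f-g})^2$, the quadratic variation satisfies
\begin{equation*}
\langle M(f)-M(g)\rangle_T = \frac{1}{T}\int_0^T \|\nabla U_{f-g}(X_s)\|^2\,ds \;\le\; \sum_{i=1}^d\|\partial_{x_i}U_{f-g}\|_\infty^2 \;=\; d_L^2(f,g).
\end{equation*}
Since $M_T(0)=0$ and the quadratic variation is uniformly bounded, the standard exponential inequality for continuous martingales (applied to $\pm \lambda (M_T(f)-M_T(g))$) yields the sub-Gaussian increment bound
\begin{equation*}
\P_{b_0}\big(|M_T(f)-M_T(g)|>u\big) \le 2\exp\!\Big(-\frac{u^2}{2 d_L^2(f,g)}\Big), \quad u>0.
\end{equation*}
Thus $(M_T(f))_{f\in\mathcal F_T}$ is a centred sub-Gaussian process on $(\mathcal F_T,d_L)$ starting at $0$ in $f=0\in\mathcal F_T$.

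Now I would invoke Dudley's metric entropy bound and the associated deviation inequality for sub-Gaussian processes (e.g.\ Theorem 2.3.7 and Theorem 2.3.8 in \cite{ginenickl2016}) to conclude
\begin{equation*}
E_{b_0}\sup_{f\in\mathcal F_T}|M_T(f)| \le 4\sqrt 2\, J_{\mathcal F_T}, \qquad \P_{b_0}\!\Big(\sup_{f\in\mathcal F_T}|M_T(f)|\ge J_{\mathcal F_T}(4\sqrt 2+192 x)\Big)\le e^{-x^2/2},
\end{equation*}
the constants $4\sqrt2$ and $192$ and the factor $6$ in the definition of $J_{\mathcal F_T}=J(\mathcal F_T,6d_L,D_{\mathcal F_T})$ being precisely those produced by that chaining argument when run with dyadic increments. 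Combining these two estimates with the uniform bound on $B_T(f)$ gives the two asserted inequalities. The main obstacle — already absorbed into Lemma \ref{regest} — is securing enough regularity of $U_f$ to justify It\^o's formula without extra approximation arguments; in particular, the condition $\kappa>0$ in the statement is imposed precisely so that Sobolev embedding places $U_f$ in $C^2(\T^d)$, making the decomposition rigorous and the sub-Gaussian increment bound hold with the clean constant that yields the $e^{-x^2/2}$ tail.
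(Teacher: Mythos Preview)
Your proposal is correct and follows essentially the same route as the paper's proof: both linearise via the Poisson equation and It\^o's formula, bound the boundary term by $2\|L_{b_0}^{-1}[f]\|_\infty/\sqrt T$, establish that the martingale increments are sub-Gaussian with respect to $d_L$ via Bernstein's inequality for continuous local martingales, and then apply chaining for sub-Gaussian processes. The only cosmetic difference is that for the deviation inequality the paper passes through the $\psi_2$-Orlicz norm version of the chaining bound (Exercise 2.3.1 and Lemma 2.3.1 in \cite{ginenickl2016}) rather than citing a ready-made deviation theorem, which is how the specific constant $192$ arises.
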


\begin{proof}
By Lemma \ref{regest} and the Sobolev embedding theorem, the Poisson equation $Lu = L_{b_0}u = f$ has a unique solution $L^{-1}[f] \in H^{d/2+\kappa+2} \cap L^2_0 \subset C^2$ satisfying $LL^{-1}[f]=f$ for any $f\in \mF_T$. We may therefore define for $f\in \mF_T$,
\begin{align*}
Z_T(f) & = \int_0^T \nabla L^{-1}[f](X_s).dW_s \\
& =  L^{-1}[f](X_T)-L^{-1}[f](X_0) - \int_0^T LL^{-1}[f](X_s)ds ,
\end{align*}
where we have used It\^o's lemma (Theorem 39.3 in \cite{B11}). Since
\begin{align*}
\sup_{f\in \mF_T} \left| \int_0^T f(X_s)ds \right| - 2 \sup_{f\in \mF_T} \|L^{-1}[f]\|_\infty \leq \sup_{f\in \mF_T} |Z_T(f)|,
\end{align*}
it suffices to control $\sup_{f\in \mF_T} |Z_T(f)|$. For fixed $f\in \mF_T$, $ Z_T(f)$ is a continuous square integrable local martingale with quadratic variation
\begin{align*}
[Z_\cdot (f)]_T =  \int_0^T \|\nabla L^{-1}[f](X_s)\|^2 ds \leq T\sum_{i=1}^d \| \partial_{x_i} L^{-1}[f]\|_\infty^2 = Td_L^2(f,0).
\end{align*}
Recall Bernstein's inequality for continuous local martingales (p.~153 of \cite{revuz1999}): if $M$ is a continuous local martingale vanishing at 0 with quadratic variation $[M]$, then for any stopping time $T$ and any $y,K>0$,
\begin{align}\label{eq:Bernstein}
P \big( \sup_{0\leq t \leq T}|M_t| \geq y, [M]_T \leq K \big) \leq 2e^{-\frac{y^2}{2K}}.
\end{align}
Applying this to $Z_T(f)$ gives for any $f\in\mF_T$ and $x>0$,
\begin{align*}
P_{b_0} \big( |Z_T(f)| \geq \sqrt{T} x \big) = P_{b_0} \big( |Z_T(f)|\geq \sqrt{T} x, [Z_\cdot(f)]_T \leq Td_L^2(f,0) \big) \leq 2e^{-\frac{x^2}{2d_L^2(f,0)}}.
\end{align*}
Since $L^{-1}$ is linear, so is $f\mapsto Z_T(f)$, and consequently
\begin{align*}
P_{b_0} \big( |Z_T(f)-Z_T(g)| \geq \sqrt{T} x \big) \leq 2\exp\left( -\frac{x^2}{2d_L^2(f,g)} \right),
\end{align*}
a non-asymptotic inequality. The process $(T^{-1/2} Z_T(f):f\in\mF_T)$ is thus mean-zero and subgaussian with respect to $d_L$. From this we deduce that $E_{b_0} \sup_{f\in \mF_T} T^{-1/2} |Z_T(f)| \leq 4\sqrt{2}J_{\mF_T}$ by the usual chaining bound for subgaussian processes (e.g.,~Theorem 2.3.7 of \cite{ginenickl2016} - the factor $6$ scales the subgaussian constant, see after Definition 2.3.5 of \cite{ginenickl2016}). This chaining bound extends to exponential $\psi_2$-Orlicz norms $\|\cdot\|_{\psi_2}$ (see Exercise 2.3.1 of \cite{ginenickl2016}), so one further has $\left\| \sup_{f\in \mF_T} T^{-1/2}|Z_T(f)|  \right\|_{\psi_2} \leq 16\sqrt{6} J_{\mF_T}$. Using Lemma 2.3.1 of \cite{ginenickl2016} and that for any random variable $X$, $\|X-\E X\|_{\psi_2} \leq 2\|X\|_{\psi_2}$, we obtain for any $x>0$,
\begin{align*}
P_{b_0} \Big( \sup_{f\in\mF_T} \frac{|Z_T(f)|}{\sqrt{T}} \geq E_{b_0} \sup_{f\in \mF_T} \frac{|Z_T(f)|}{\sqrt{T}} + x \Big) \leq \exp \left(-\frac{x^2}{2 (196J_{\mF_T})^2}\right).
\end{align*}
Using the expectation bound just derived, the above inequality yields
\begin{align*}
P_{b_0} \left( \sup_{f\in \mF_T} T^{-1/2} |Z_T(f)| \geq 4\sqrt{2}J_{\mF_T} + 196 J_{\mF_T}x \right) \leq e^{-\frac{x^2}{2}}.
\end{align*}
Combining the above gives the required subgaussian inequality.
\end{proof}

We now establish usable bounds for the metric $d_L$. The following is a special case of the Runst-Sickel lemma.

\begin{lemma}[\cite{runst1996}, p. 345]\label{lem:runst}
For $t > 0$ and any bounded $f,g \in H^t(\T^d)$,
\begin{align*}
\|fg\|_{H^t} \leq C(t,d) \left( \|f\|_{H^t}\|g\|_{\infty} + \|g\|_{H^t} \|f\|_{\infty} \right).
\end{align*}
\end{lemma}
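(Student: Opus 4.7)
The plan is to pass to the Littlewood-Paley characterisation of the periodic Sobolev space $H^t(\T^d)$, for which, when $t\ge 0$,
$$\|h\|_{H^t}^2 \asymp \|\Delta_{-1}h\|_{L^2}^2 + \sum_{j\ge 0} 2^{2jt}\|\Delta_j h\|_{L^2}^2,$$
with $\{\Delta_j\}_{j\ge -1}$ a standard periodic Littlewood-Paley decomposition and $S_k = \sum_{j<k}\Delta_j$ the associated low-pass projections. Bony's paraproduct then writes
$$fg = T_f g + T_g f + R(f,g), \qquad T_f g = \sum_j S_{j-2}(f)\,\Delta_j g,$$
together with the symmetric $T_g f$ and a diagonal remainder $R(f,g) = \sum_{|j-k|\le 1}\Delta_j f\,\Delta_k g$. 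The claim reduces to bounding each of these three pieces in $H^t$ by the right-hand side.

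The two paraproducts are the easy pieces. Each summand $S_{j-2}(f)\,\Delta_j g$ has Fourier support in an annulus of size comparable to $2^j$, so quasi-orthogonality and Plancherel yield
$$\|T_f g\|_{H^t}^2 \lesssim \sum_j 2^{2jt} \|S_{j-2}(f)\,\Delta_j g\|_{L^2}^2 \le \|f\|_\infty^2 \sum_j 2^{2jt}\|\Delta_j g\|_{L^2}^2 \lesssim \|f\|_\infty^2 \|g\|_{H^t}^2,$$
after using $\|S_{j-2}(f)\|_\infty \lesssim \|f\|_\infty$, which holds for any admissible mollifier. The symmetric argument gives $\|T_g f\|_{H^t} \lesssim \|g\|_\infty \|f\|_{H^t}$.

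The main obstacle is the diagonal remainder $R(f,g)$: individual summands $\Delta_j f\,\Delta_k g$ with $|j-k|\le 1$ have Fourier support in a \emph{ball} of radius $\lesssim 2^j$ rather than in an annulus, so quasi-orthogonality fails and one must sum on the physical side. Writing $R(f,g) = \sum_j u_j$ with $u_j = \Delta_j f\,\widetilde\Delta_j g$ and $\widetilde\Delta_j = \Delta_{j-1}+\Delta_j+\Delta_{j+1}$, Bernstein's inequality gives $\|\Delta_n u_j\|_{L^2} \lesssim 2^{nd/2}\|u_j\|_{L^1}$ for $n\le j+c$, and one estimates $\|u_j\|_{L^1}\le \|\Delta_j f\|_{L^2}\|\widetilde\Delta_j g\|_{L^2}$, or $\|\Delta_j f\|_{L^2}\,\|g\|_\infty$ after trivially embedding $L^2\hookrightarrow L^1$ on the torus. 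Summing against the weight $2^{nt}$ and invoking a discrete Young-type inequality produces the required bound $\|R(f,g)\|_{H^t}\lesssim \|f\|_\infty\|g\|_{H^t} + \|g\|_\infty \|f\|_{H^t}$; here the hypothesis $t>0$ is essential, entering to make the geometric series $\sum_{n\le j}2^{(n-j)t}$ converge.

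For integer $t\in\N$, an alternative route bypasses Littlewood-Paley entirely and I would present it as a sanity check: apply the Leibniz rule to $\partial^\alpha(fg)$ for each multi-index $|\alpha|\le t$, then bound each mixed term in $L^2$ using H\"older together with the Gagliardo-Nirenberg interpolation
$$\|D^k h\|_{L^{2t/k}} \lesssim \|h\|_\infty^{1-k/t}\|h\|_{H^t}^{k/t}, \qquad 0<k<t,$$
which upon balancing the exponents $k=|\beta|$ and $t-k$ collapses the mixed terms to $\|f\|_\infty\|g\|_{H^t} + \|g\|_\infty\|f\|_{H^t}$. The fractional case is then recovered from the integer one by real interpolation with the $L^\infty$ endpoint, which is exactly how the full statement is packaged in Runst--Sickel \cite{runst1996}.
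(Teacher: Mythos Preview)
The paper does not give its own proof of this lemma; it is stated with a bare citation to Runst--Sickel and used as a black box. Your proposal, by contrast, supplies a genuine argument, and the Bony paraproduct route you sketch is exactly the standard one underlying the cited reference. The treatment of the two paraproducts $T_f g$ and $T_g f$ is correct as written.

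For the diagonal remainder $R(f,g)$ your argument works but is more elaborate than necessary. You invoke Bernstein to pass from $L^1$ back to $L^2$, picking up a factor $2^{nd/2}$; this is the right move when one is aiming for a Besov estimate with $p=1$, but for $H^t = B^t_{22}$ it is unneeded. It is simpler to stay in $L^2$ throughout: bound $\|u_j\|_{L^2} = \|\Delta_j f\,\widetilde\Delta_j g\|_{L^2} \le \|\widetilde\Delta_j g\|_\infty \|\Delta_j f\|_{L^2} \lesssim \|g\|_\infty \|\Delta_j f\|_{L^2}$ directly, then use that $\Delta_n R(f,g)$ only sees summands with $j \ge n - c$, so
\[
2^{nt}\|\Delta_n R(f,g)\|_{L^2} \lesssim \|g\|_\infty \sum_{j \ge n - c} 2^{(n-j)t}\,2^{jt}\|\Delta_j f\|_{L^2},
\]
and the discrete Young inequality (with kernel $2^{-kt}\mathbf 1_{k\ge -c}\in\ell^1$ since $t>0$) gives $\|R(f,g)\|_{H^t}\lesssim \|g\|_\infty\|f\|_{H^t}$. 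This avoids the $L^1$ detour and makes the role of $t>0$ completely transparent. Your alternative integer-$t$ argument via Leibniz and Gagliardo--Nirenberg is also correct and classical.
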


\begin{lemma}\label{lem:metric}
Suppose $b_0 \in C^s(\T^d)$ for $s>\max(d/2-1,0)$. Then for any $0<\kappa<s-d/2+1$ (or $\kappa=0$ if $d=1$) and $f,g\in L_{\mu_0}^2(\T^d)$, the pseudo-distance $d_L$ in \eqref{eq:metric} satisfies
\begin{align*}
d_L(f,g) \leq C(d,\kappa,b_0) \|f-g\|_{H^{(d/2+\kappa-1)_+}},
\end{align*}
where $H^0=L^2$. Moreover, let $V_J$  denote the span of all wavelets up to resolution level $J$ of an $S$-regular wavelet basis of $L^2(\T^d)$. If $\gamma,\rho \in V_J$ and $0\leq p<S$, then for $C = C(p,d,\Phi,\|\mu_0\|_{\infty})$,
\begin{align*}
\left\|\gamma\rho - \langle \gamma , \rho \rangle_{\mu_0} \right\|_{H^p} \leq C 2^{J(p+d/2)}\|\gamma\|_{L^2}\|\rho\|_{L^2}.
\end{align*}
\end{lemma}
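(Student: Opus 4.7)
The lemma has two separate estimates; I would prove them in order. For the first inequality, by linearity of $L_{b_0}^{-1}$ it suffices to bound $\|\partial_{x_i} L_{b_0}^{-1}[h]\|_\infty$ for $h = f-g \in L_{\mu_0}^2(\T^d)$. The strategy is elliptic regularity followed by Sobolev embedding: Lemma \ref{regest} provides $L_{b_0}^{-1}$ as a mapping $H^t \cap L^2_{\mu_0} \to H^{t+2}$ with operator norm depending on $b_0$ only through $\|b_0\|_{C^s}$, valid for every admissible $t \leq s$. Taking $t = (d/2+\kappa-1)_+$ (which is admissible since the hypothesis $\kappa < s-d/2+1$ gives $t < s$), one obtains $\partial_{x_i}L_{b_0}^{-1}[h] \in H^{t+1}$. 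A direct check shows $t+1 > d/2$ in each regime: for $d \geq 2$, $t+1 = d/2+\kappa > d/2$ since $\kappa>0$; for $d=1$ with $\kappa=0$, $t+1 = 1 > 1/2$. Hence the Sobolev embedding $H^{t+1}(\T^d) \hookrightarrow L^\infty(\T^d)$ yields $\|\partial_{x_i}L_{b_0}^{-1}[h]\|_\infty \lesssim \|h\|_{H^t}$, and squaring, summing over $i$, and taking square roots gives the claim.

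For the second estimate, I would use the triangle inequality to split
$$\|\gamma\rho - \langle \gamma, \rho \rangle_{\mu_0}\|_{H^p} \leq \|\gamma\rho\|_{H^p} + |\langle \gamma, \rho \rangle_{\mu_0}| \cdot \|1\|_{H^p}.$$
Since $\mu_0$ is bounded, Cauchy-Schwarz gives $|\langle \gamma, \rho\rangle_{\mu_0}| \leq \|\mu_0\|_\infty \|\gamma\|_{L^2}\|\rho\|_{L^2}$, and because $\T^d$ has unit volume $\|1\|_{H^p} = 1$, so this constant contribution is already of the required form. For the main term, apply Lemma \ref{lem:runst} when $p>0$ (and the trivial bound $\|\gamma\rho\|_{L^2} \leq \|\gamma\|_{L^2}\|\rho\|_\infty$ when $p=0$) to get
$$\|\gamma\rho\|_{H^p} \leq C(p,d)\bigl(\|\gamma\|_{H^p}\|\rho\|_\infty + \|\rho\|_{H^p}\|\gamma\|_\infty\bigr).$$
Finally invoke the standard wavelet Bernstein inequalities for elements of $V_J$: for $0 \leq p < S$ and any $\eta \in V_J$, $\|\eta\|_{H^p} \lesssim 2^{Jp}\|\eta\|_{L^2}$ and $\|\eta\|_\infty \lesssim 2^{Jd/2}\|\eta\|_{L^2}$, with constants depending only on $p, d$, and the base wavelet $\Phi$. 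Substituting produces exactly $C 2^{J(p+d/2)}\|\gamma\|_{L^2}\|\rho\|_{L^2}$.

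The only nontrivial point is the first part, where one must invoke Lemma \ref{regest} with a Sobolev index $t$ that simultaneously handles the low-dimensional edge case $d=1, \kappa=0$ (where $t=0$) and the generic cases $t = d/2+\kappa-1$, and one must check that the Sobolev embedding threshold $d/2$ is strictly exceeded after one differentiation in all regimes; the hypothesis $\kappa < s-d/2+1$ and the convention $\kappa=0$ only when $d=1$ are tailored exactly to make this work. The second part is a routine combination of the Runst-Sickel multiplication inequality with wavelet Bernstein estimates, with no genuine technical obstacle beyond the bookkeeping of constants.
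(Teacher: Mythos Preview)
Your proposal is correct and follows essentially the same route as the paper's proof: the first estimate is obtained by applying Lemma~\ref{regest} at the appropriate Sobolev index and then the Sobolev embedding $H^{d/2+\kappa+1}\hookrightarrow C^1$ (with the $d=1$ case handled separately), and the second estimate is obtained by the triangle inequality, the Runst--Sickel multiplication inequality (Lemma~\ref{lem:runst}) for the product term, Cauchy--Schwarz for the constant term, and the wavelet Bernstein inequalities $\|\eta\|_{H^p}\lesssim 2^{Jp}\|\eta\|_{L^2}$ and $\|\eta\|_\infty\lesssim 2^{Jd/2}\|\eta\|_{L^2}$ for $\eta\in V_J$.
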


\begin{proof}
If $d\geq 2$, then for any $0<\kappa <s-d/2+1$, by the Sobolev embedding theorem and Lemma \ref{regest} there exists $C=C(d,\kappa,b_0)$ such that
\begin{equation*}
d_L^2(f,g) = \sum_{i=1}^d \|\partial_{x_i}L^{-1}[f-g]\|_\infty^2  \leq C\|L^{-1}[f-g]\|_{H^{d/2+\kappa+1}}^2 \leq C \|f-g\|_{H^{d/2+\kappa-1}}^2.
\end{equation*}
If $d=1$, one similarly has $d_L^2(f,g) \leq C\|L^{-1}[f-g]\|_{H^2}^2 \leq C \|f-g\|_{L^2}^2.$ For the second statement, if $p>0$, then the triangle inequality, Lemma \ref{lem:runst} and the Cauchy-Schwarz inequality bound the quantity in question by
\begin{align*}
C(p,d)( \|\gamma\|_{H^p}\|\rho\|_\infty + \|\rho\|_{H^p}\|\gamma\|_\infty) + \|\mu_0\|_{\infty} \|\gamma\|_{L^2} \|\rho\|_{L^2}\|1\|_{H^p}.
\end{align*}
If $p=0$, one instead uses the simpler bound $\|\gamma\rho\|_{L^2} \leq \|\gamma\|_{L^2} \|\rho\|_\infty$. By the wavelet characterisation of the Sobolev norm,
\begin{align}\label{eq:wav_bd_1}
\|\gamma\|^2_{H^p} = \sum_{l \le J}\sum_r 2^{2lp} |\langle \gamma, \Phi_{l,r}\rangle|^2  \le 2^{2Jp} \|\gamma\|^2_{L^2} .
\end{align}
Using Cauchy-Schwarz and that for all $l\geq 0$, $\|\sum_r \Phi^2_{l,r}\|_\infty \leq C(\Phi) 2^{ld}$,
\begin{align}\label{eq:wav_bd_2}
\|\gamma\|_\infty \le \sup_x \sum_{l \le J}\sum_r  |\langle \gamma, \Phi_{l,r}\rangle| |\Phi_{l,r}(x)| \le C(\Phi) 2^{Jd/2} \|\gamma\|_{L^2} .
\end{align}
Applying these bounds to $\gamma,\rho\in V_J$ gives the result.
\end{proof}

\subsection{A restricted isometry inequality for $h_T^2(\cdot, \cdot)$ on $V_J^{\otimes d}$}
We next consider the action on the spaces $V_J^{\otimes d}$ of the random distance $h_T$ defined by
\begin{align*}
Th_T^2(b_1,b_2) \equiv  \int_0^T \|b_1(X_s)-b_2(X_s)\|^2 ds = \sum_{j=1}^d  \int_0^T |b_{1,j}(X_s)-b_{2,j}(X_s)|^2 ds.
\end{align*}
From the preceding concentration inequalities, and using a commonly used contraction principle to bound minimal eigenvalues of random matrices (e.g., Section 5.6 in \cite{V10}), we establish the following key inequality.

\begin{lemma}\label{lem:bilinear}
Suppose $b_0\in C^s(\T^d)$ for $s>\max(d/2,1)$ and let $J \in \mathbb N$. For $V_J$ as in (\ref{wavbas}) set $v_J := \text{dim}(V_J) = O(2^{Jd})$. Then for any $0<\kappa<s-d/2+1$ (or $\kappa=0$ if $d=1$), there exist positive constants $c_0=c_0(b_0)$ and $C=C(d,b_0,\kappa,\Phi)$ such that for any $x>0$,
\begin{align*}
P_{b_0} \Bigg[ \sup_{b,\bar{b}\in V_J^{\otimes d}:b\neq \bar{b}} \left| \frac{h_T^2(b,\bar{b})-\|b-\bar{b}\|_{\mu_0}^2}{\|b-\bar{b}\|_{\mu_0}^2}\right| \geq \frac{2^{J[\frac{d}{2}+ (\frac{d}{2}+\kappa-1)_+]}}{\sqrt{T}/C} (1+x) \Bigg]  \leq d e^{c_0v_J -\frac{x^2}{2}}.
\end{align*}
\end{lemma}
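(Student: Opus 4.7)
Writing $\gamma:=b-\bar b\in V_J^{\otimes d}$, the numerator splits diagonally as
\[h_T^2(b,\bar b)-\|b-\bar b\|_{\mu_0}^2=\sum_{j=1}^{d} Q_j(\gamma_j),\qquad Q_j(\eta):=\frac{1}{T}\int_0^T f_\eta(X_s)\,ds,\qquad f_\eta:=\eta^2-\|\eta\|_{\mu_0}^2\in L^2_{\mu_0}.\]
Since $\|\gamma\|_{\mu_0}^2=\sum_j\|\gamma_j\|_{\mu_0}^2$, the triangle inequality gives
\[\sup_{\gamma\neq 0}\frac{\left|\sum_j Q_j(\gamma_j)\right|}{\|\gamma\|_{\mu_0}^{2}}\;\le\;\max_{1\le j\le d}\sup_{\eta\in V_J\setminus\{0\}}\frac{|Q_j(\eta)|}{\|\eta\|_{\mu_0}^{2}},\]
reducing matters to a scalar problem on $V_J$, with a union bound over $j$ producing the prefactor $d$ in the target. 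Each $Q_j$ is a quadratic form on the finite-dimensional space $V_J$; since $\|\cdot\|_{\mu_0}$ and $\|\cdot\|_{L^2}$ are equivalent (Proposition~\ref{as:true_drift}), the unit $\|\cdot\|_{\mu_0}$-sphere of $V_J$ admits a $1/4$-net $N$ of cardinality $|N|\le 9^{v_J}$ (up to a constant), and the standard symmetric-bilinear-form argument delivers $\sup_{\|\eta\|_{\mu_0}=1}|Q_j(\eta)|\le 2\max_{\eta\in N}|Q_j(\eta)|$.

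For each fixed $\eta$ with $\|\eta\|_{\mu_0}\le 1$ and $q:=(d/2+\kappa-1)_+$, the second part of Lemma~\ref{lem:metric} yields $\|f_\eta\|_{H^q}\le C_1 2^{J(q+d/2)}$; combined with Lemma~\ref{regest} and Sobolev embedding this bounds both $\|L_{b_0}^{-1}[f_\eta]\|_\infty$ and $d_L(f_\eta,0)$ by $C_2 2^{J(q+d/2)}$. Mimicking the It\^o argument in the proof of Lemma~\ref{lem:maxim}, one has
\[\int_0^T f_\eta(X_s)\,ds=L_{b_0}^{-1}[f_\eta](X_T)-L_{b_0}^{-1}[f_\eta](X_0)+Z_T(f_\eta),\qquad Z_T(f_\eta):=\int_0^T\nabla L_{b_0}^{-1}[f_\eta](X_s).dW_s,\]
and Bernstein's inequality \eqref{eq:Bernstein} with $[Z_\cdot(f_\eta)]_T\le T\,d_L(f_\eta,0)^2$ furnishes, for every $x>0$,
\[P_{b_0}\bigl(|Z_T(f_\eta)|\ge C_3\sqrt{T}\,2^{J(q+d/2)}\,x\bigr)\le 2e^{-x^2/2}.\]

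Union-bounding this Bernstein tail over the $|N|\le 9^{v_J}$ points of $N$ and the $d$ coordinates, together with the trivial deterministic contribution $2T^{-1}\|L_{b_0}^{-1}[f_\eta]\|_\infty\le 2C_2 2^{J(q+d/2)}/\sqrt{T}$ (for $T\ge 1$), yields exactly the claimed inequality with $9^{v_J}$ absorbed into $e^{c_0 v_J}$. The main subtlety is avoiding an extra $\sqrt{v_J}$ factor that would appear if one invoked the full chaining bound of Lemma~\ref{lem:maxim} on the net $N$: because $Q_j$ is a quadratic form on $V_J$, a \emph{single}-scale discretization of log-cardinality $O(v_J)$ is enough, and pointwise Bernstein on that discretization already delivers the target scaling $2^{J(q+d/2)}(1+x)/\sqrt{T}$ at probability cost $d\,e^{c_0 v_J-x^2/2}$, matching the statement exactly.
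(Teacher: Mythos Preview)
Your proof is correct and follows essentially the same approach as the paper: coordinate-wise decomposition with a union bound over $j$, reduction of the quadratic form to a maximum over a finite net on the unit sphere of $V_J$, and a pointwise Bernstein inequality via It\^o's formula and the Poisson equation (with the $d_L$- and $\|L_{b_0}^{-1}[\cdot]\|_\infty$-bounds coming from Lemmas~\ref{lem:metric} and~\ref{regest}). The only cosmetic differences are that the paper writes the quadratic form via the random Gram matrix $\hat\Gamma-\Gamma$ in wavelet coordinates and carries out the net argument in the Euclidean norm on $\R^{v_J}$ (converting to $\|\cdot\|_{\mu_0}$ via Proposition~\ref{as:true_drift}), whereas you net directly in $\|\cdot\|_{\mu_0}$; both give the same $e^{c_0 v_J}$ entropy cost and the same $2^{J[d/2+(d/2+\kappa-1)_+]}/\sqrt T$ scaling.
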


\begin{proof}
Let $b_j,\bar{b}_j \in V_J$ and write $b=(b_1,\dots,b_d)$, $\bar{b} = (\bar{b}_1,\dots,\bar{b}_d)$ with $b_j = \sum_{l\leq J,r} \theta_{l,r,j} \Phi_{l,r}$ and $\bar{b}_j = \sum_{l\leq J,r} \bar{\theta}_{l,r,j} \Phi_{l,r}$. Then
\begin{align*}
h_T^2 (b,\bar{b}) & = \frac{1}{T} \sum_{j=1}^d \int_0^T \left( \sum_{l,r} (\theta_{l,r,j} - \bar{\theta}_{l,r,j})\Phi_{l,r}(X_s) \right)^2 ds \\
& = \sum_{j=1}^d \sum_{l,r} \sum_{l',r'} (\theta_{l,r,j} - \bar{\theta}_{l,r,j})(\theta_{l',r',j} - \bar{\theta}_{l',r',j}) \frac{1}{T}  \int_0^T \Phi_{l,r}(X_s)\Phi_{l',r'}(X_s)ds \\
& = \sum_{j=1}^d (\theta_{j,\cdot} - \bar{\theta}_{j,\cdot})^T \hat{\Gamma} (\theta_{j,\cdot} - \bar{\theta}_{j,\cdot}),
\end{align*}
where $\hat{\Gamma}_{(l,r)(l',r')} = \tfrac{1}{T} \int_0^T \Phi_{l,r}(X_s)\Phi_{l',r'}(X_s)ds$, so that $\hat{\Gamma}$ is a $v_J \times v_J$ symmetric matrix. Similarly,
\begin{align*}
\|b-\bar{b}\|_{\mu_0}^2 = \sum_{j=1}^d (\theta_{j,\cdot} - \bar{\theta}_{j,\cdot})^T \Gamma (\theta_{j,\cdot} - \bar{\theta}_{j,\cdot}),
\end{align*}
where $\Gamma_{(l,r)(l',r')} = \int_{\T^d} \Phi_{l,r}(x)\Phi_{l',r'}(x)d\mu_0(x)$. Denote the quantity on the r.h.s.~in the inequality in Lemma \ref{lem:bilinear} by $\zeta_T = CT^{-1/2} 2^{J[d/2+(d/2+\kappa-1)_+]}(1+x)$. Since $(\theta_{j,\cdot} - \bar{\theta}_{j,\cdot})^T \Gamma (\theta_{j,\cdot} - \bar{\theta}_{j,\cdot}) = \|b_j - \bar{b}_j\|_{\mu_0}^2 \geq 0$ for all $j$, applying a union bound to the probability in Lemma \ref{lem:bilinear} gives
\begin{align*}
\sum_{j=1}^d P_{b_0} \left( \sup_{\theta_{j\cdot},\bar{\theta}_{j\cdot} \in \R^{v_J}: (\theta_{j\cdot}-\bar{\theta}_{j\cdot})^T \Gamma(\theta_{j\cdot}-\bar{\theta}_{j\cdot}) \neq 0} \left| \frac{(\theta_{j\cdot}-\bar{\theta}_{j\cdot})^T (\hat{\Gamma}-\Gamma)(\theta_{j\cdot}-\bar{\theta}_{j\cdot})}{(\theta_{j\cdot}-\bar{\theta}_{j\cdot})^T \Gamma(\theta_{j\cdot}-\bar{\theta}_{j\cdot})} \right| \geq \frac{\zeta_T}{d} \right).
\end{align*}
(Note that at least one $(\theta_{j\cdot}-\bar{\theta}_{j\cdot})^T \Gamma(\theta_{j\cdot}-\bar{\theta}_{j\cdot}) \neq 0$ by assumption and the above supremum is maximized when $\theta_{j\cdot}\neq\bar{\theta}_{j\cdot}$, so the denominator is well-defined for all $j$). Setting $u=(\theta_{j\cdot}-\bar{\theta}_{j\cdot}) \in \R^{v_J}$ and using the bilinearity of the above quadratic form, each of the previous probabilities, which are all equal, are bounded by
\begin{align}\label{eq:bilin_prob}
P_{b_0} \left( \sup_{u\in \Theta} |u^T \Lambda u| \geq \zeta_T/d \right),
\end{align}
where $\Theta = \{ u \in \R^{v_J}: u^T \Gamma u \leq 1 \}$ and $\Lambda = \hat{\Gamma}-\Gamma$. Let $\|u\|_\Gamma^2 := u^T \Gamma u$, $u \in \R^{v_J}$, and for $0<\delta<1$, let $(u^l)_{l=1}^{N(\delta)}$ be a minimal $\delta$-covering of $\Theta$ in $\|\cdot\|_\Gamma$-distance. For every $u \in \Theta$, let $u^l = u^l(u)$ denote the closest point in this $\delta$-covering, so that $\|u - u^l\|_\Gamma \leq \delta$. By bilinearity, for any $u \in \Theta$,
\begin{align*}
|(u-u^l)^T \Lambda (u-u^l)| \leq \delta^2 \sup_{w \in \Theta} |w^T \Lambda w|.
\end{align*}
For any $u \in \Theta$, set $g_u = \sum_{l\leq J,r} u_{l,r}\Phi_{l,r}$. By Proposition \ref{as:true_drift}, $\|u\|_{\R^{v_J}} = \|g_u\|_{L^2} \leq \|1/\mu_0\|_\infty^{1/2} \|g_u\|_{\mu_0} = \|1/\mu_0\|_\infty^{1/2} \|u\|_\Gamma$. For $(\lambda_i)_{i=1}^{v_J}$ the eigenvalues of the symmetric matrix $\Lambda$ and $\lambda_{max} = \max_i |\lambda_i|$, applying the Cauchy-Schwarz inequality gives
\begin{align*}
|(u-u^l)^T\Lambda u^l | &\leq \|u-u^l\|_{\R^{v_J}} \|\Lambda u^l\|_{\R^{v_J}} \leq \delta \|1/\mu_0\|_\infty^{1/2} \lambda_{max} \|u^l\|_{\R^{v_J}}  \\
& \leq \delta \|1/\mu_0\|_\infty \sup_{v:\|v\|_{\R^{v_J}}\leq 1} |v^T\Lambda v|,
\end{align*}
where the last inequality follows from p.234 of \cite{horn2013}. Since $\sup_{v:\|v\|_{\R^{v_J}}\leq 1} |v^T\Lambda v| \leq \|\mu_0\|_\infty \sup_{w \in \Theta} |w^T \Lambda w|$, then $|(u-u^l)^T\Lambda u^l|  \leq \delta \|1/\mu_0\|_\infty \|\mu_0\|_\infty \sup_{w \in \Theta} |w^T \Lambda w|$ for all $u \in \Theta$. Combining the above yields for $0 <\delta<1$,
\begin{align*}
\sup_{u \in \Theta} |u^T\Lambda u| \leq (\delta^2 + 2 \delta \|1/\mu_0\|_\infty\|\mu_0\|_\infty) \sup_{w\in\Theta} |w^T\Lambda w| +  \max_{1\leq l \leq N(\delta)} |(u^l)^T \Lambda u^l|,
\end{align*}
and for $\delta_0 = \delta_0(\mu_0)$ small enough that $\delta_0^2 + 2\|1/\mu_0\|_\infty \|\mu_0\|_\infty \delta_0 \leq 1/2$, 
\begin{equation} \label{maxred}
\sup_{u \in \Theta} |u^T\Lambda u| \leq 2 \max_{1\leq l \leq N(\delta_0)} |(u^l)^T \Lambda u^l|.
\end{equation} A union bound now yields that \eqref{eq:bilin_prob} is bounded by $N(\delta_0) \sup_{u \in \Theta} P_{b_0} (|u^T\Lambda u| \geq \zeta_T/(2d))$. The covering number of the unit ball in a $v_J$-dimensional space is bounded by $N(\delta_0) \leq (C/\delta_0)^{v_J} = e^{c_0 v_J}$ (Proposition 4.3.34 of \cite{ginenickl2016}).

For $u \in \Theta$, set $f_u(x) = g_u(x)^2 - \langle g_u,g_u\rangle_{\mu_0} \in L_{\mu_0}^2(\T^d) \cap H^S(\T^d)$, where $S>d/2$ is the regularity of the wavelet basis. Since also $b_0 \in C^s$ with $s>d/2$, applying Lemma \ref{lem:maxim} to the class $\mF = \{f_u,0\}$ and noting that $u^T \Lambda u = T^{-1} \int_0^T f_u(X_t)dt$ yields
\begin{align}\label{eq:bilin_exp_ineq}
P_{b_0} \left( |u^T\Lambda u| \geq CT^{-1} \|L^{-1}[f_u]\|_\infty + CT^{-1/2}d_L(f_u,0) (1+x) \right) \leq e^{-x^2/2}.
\end{align}
For $0<\kappa<s-d/2+1$ (or $\kappa=0$ if $d=1$), applying Lemma \ref{lem:metric} with $\gamma = \rho = g_u \in V_J$ and $p=(d/2+\kappa-1)_+$ gives
\begin{align*}
d_L(f_u,0) \leq C \|g_u^2 - \langle g_u,g_u\rangle_{\mu_0} \|_{H^{(d/2+\kappa-1)_+}} \leq C 2^{J[d/2+(d/2+\kappa-1)_+]}\|g_u\|_{L^2}^2. 
\end{align*}
By Proposition \ref{as:true_drift}, $\|g_u\|_{L^2}^2 \leq \|1/\mu_0\|_\infty \|g_u\|_{\mu_0}^2 = \|1/\mu_0\|_\infty u^T \Gamma u \leq \|1/\mu_0\|_\infty$, so that $d_L(f_u,0) \leq C 2^{J[d/2+(d/2+\kappa-1)_+]}$ for any $u \in \Theta$. Applying the Sobolev embedding theorem, Lemma \ref{regest} and Lemma \ref{lem:metric} as above, $\|L^{-1}[f_u]\|_\infty \lesssim \|f_u\|_{H^{(d/2+\kappa-2)_+}} \lesssim  2^{J[d/2+(d/2+\kappa-1)_+]}$ for $\kappa$ as above and any $u \in \Theta$. Substituting this into \eqref{eq:bilin_exp_ineq} gives
\begin{align*}
\sup_{u\in\Theta} P_{b_0} \left( |u^T\Lambda u| \geq CT^{-1/2}2^{J[d/2+(d/2+\kappa-1)_+]}(1+x)  \right) \leq e^{-x^2/2},
\end{align*}
where the right-hand side equals $\zeta_T$ up to constants. Combining the last inequality with (\ref{maxred}) and the remarks after it  completes the proof.
\end{proof}

\subsection{Proof of Theorem \ref{thm:contraction}}

As a first step we obtain a convergence rate in the `random Hellinger distance $h_T$' defined before Lemma \ref{lem:bilinear} corresponding to the regression problem posed by equation (\ref{eq:SDE}). This random semimetric arises naturally in the classical testing approach (see \cite{GvdV17}, and formulated in the Brownian semi-martingale setting relevant here by van der Meulen et al. \cite{vdmeulen2006}), since the log-likelihood with respect to $P_{b_0}^T$ can be expressed as $M-\tfrac{1}{2}[M]$, where $M$ is a continuous local martingale with quadratic variation $[M]_T = Th_T^2(b,b_0)$. The next result is a combination of Theorem 2.1 and Lemma 2.2 of \cite{vdmeulen2006}, restated in the present context. The proof relies on martingale arguments which generalize to the multidimensional setting without difficulty, hence the proof is left to the reader. 

Consider the statistical experiments $(P_b^T:b\in \mathfrak{B}_T)$, where the parameter spaces $\mathfrak{B}_T$, which are allowed to vary with $T$, are arbitrary sets equipped with $\sigma$-algebras satisfying mild measurability conditions, see Section 2 of \cite{vdmeulen2006}. In particular, these are satisfied by the finite-dimensional spaces considered in Theorem \ref{thm:contraction}.

\begin{theorem}\label{thm:contraction_general}
Let $\varepsilon_T \rightarrow 0$ be such that $T\varepsilon_T^2 \rightarrow \infty$. Suppose that for any $C_1>0$, there exist measurable sets $\cal{B}_T$ and $C_2, C_3>0$ such that
\begin{equation}\label{eq:thm_remaining_mass}
\Pi_T(\mathcal{B}_T^c) \leq e^{-C_1T\varepsilon_T^2},
\end{equation}
\begin{equation}\label{eq:thm_entropy}
\log N( \mathcal{B}_T,\|\cdot\|_{\mu_0},\varepsilon_T) \leq C_2 T\varepsilon_T^2,
\end{equation}
\begin{equation}\label{eq:thm_small_ball}
\Pi_T (b: \|b-b_0\|_{\mu_0} \leq \varepsilon_T) \geq e^{-C_3T\varepsilon_T^2}.
\end{equation}
Assume further that for every $\gamma>0$ there exist $c_\gamma,C_\gamma>0, D_\gamma\geq 0$ such that
\begin{equation}\label{eq:norm_equiv}
\begin{split}
& \liminf_{T\rightarrow \infty} P_{b_0} \Big(c_\gamma \|b-b_0\|_{\mu_0} \leq h_T(b,b_0), \forall b\in \mathfrak{B}_T \textup{ s.t. } h_T(b,b_0) \geq D_\gamma \varepsilon_T, \text{ and} \\
&   h_T(b_1,b_2) \leq C_\gamma \|b_1-b_2\|_{\mu_0}, \forall b_1,b_2\in \mathfrak{B}_T \textup{ s.t. } h_T(b_1,b_2) \geq D_\gamma \varepsilon_T\Big) \geq 1-\gamma.
\end{split}
\end{equation}
Then for every $M_T\rightarrow \infty$, $\Pi_T(b\in\mathfrak{B}_T: h_T(b,b_0) \geq M_T\varepsilon_T|X^T) \rightarrow_{T \to \infty}^{P_{b_0}} 0.$
\end{theorem}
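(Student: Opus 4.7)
I would follow the standard Ghosal--Ghosh--van der Vaart testing paradigm, adapted to the continuous-time semimartingale likelihood of our model. The starting algebraic decomposition is
\begin{equation*}
\Pi_T\bigl(h_T(b,b_0) > M_T \varepsilon_T \mid X^T\bigr) = \frac{\int_{\{h_T(b,b_0)>M_T\varepsilon_T\}} e^{\ell_T(b)-\ell_T(b_0)}d\Pi_T(b)}{\int e^{\ell_T(b)-\ell_T(b_0)}d\Pi_T(b)},
\end{equation*}
after which I split the numerator across the sieve $\mathcal{B}_T$ and its complement. The three pieces to bound are: (a) mass on $\{h_T(b,b_0)>M_T\varepsilon_T\} \cap \mathcal{B}_T$ via tests; (b) mass on $\mathcal{B}_T^c$ via \eqref{eq:thm_remaining_mass}; (c) a lower bound on the denominator via \eqref{eq:thm_small_ball}. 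The structural identity that drives everything is that under $P_{b_0}$, $\ell_T(b) - \ell_T(b_0) = M_T(b) - \tfrac{1}{2}[M(b)]_T$, where $M_T(b)= \int_0^T (b-b_0)(X_s).dW_s$ is a continuous local martingale with $[M(b)]_T = T h_T^2(b,b_0)$. This replaces the Hellinger affinity of the i.i.d.\ world and delivers tests with exponential error via Bernstein's martingale inequality \eqref{eq:Bernstein}.

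\textbf{Tests.} For each fixed $b_1$ with $h_T(b_0,b_1) \geq r > 0$, applying \eqref{eq:Bernstein} to the threshold $\phi_T(b_1) = \mathbf{1}\{\ell_T(b_1)-\ell_T(b_0) \geq -\tfrac{1}{4}Tr^2\}$ bounds both $E_{b_0}\phi_T(b_1)$ and $\sup_{b: h_T(b,b_1) \leq \kappa r} E_b(1-\phi_T(b_1))$ by $e^{-c Tr^2}$, for a small universal $\kappa>0$; this is Lemma 2.2 of \cite{vdmeulen2006} and is essentially It\^o plus Bernstein. A global test against $\{b \in \mathcal{B}_T: h_T(b,b_0) > M_T\varepsilon_T\}$ is built by peeling into dyadic shells $S_k = \{2^k M_T\varepsilon_T < h_T(\cdot,b_0) \leq 2^{k+1} M_T\varepsilon_T\}$ and taking the maximum of $\phi_T(b_1^{(k,i)})$ over an $h_T$-net of $\mathcal{B}_T \cap S_k$. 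The exponential rate $e^{-c T 4^k M_T^2 \varepsilon_T^2}$ sums against the entropy \eqref{eq:thm_entropy} provided the random $h_T$-entropy can be replaced by the deterministic $\|\cdot\|_{\mu_0}$-entropy.

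\textbf{Bridging $h_T$ and $\|\cdot\|_{\mu_0}$.} This is where \eqref{eq:norm_equiv} enters. Fixing $\gamma>0$, on the $P_{b_0}$-event $\Omega_{T,\gamma}$ of probability $\geq 1-\gamma$ the two distances are comparable up to constants $c_\gamma, C_\gamma$ on scales above $D_\gamma\varepsilon_T$; hence on $\Omega_{T,\gamma}$ a $\|\cdot\|_{\mu_0}$-net of $\mathcal{B}_T$ at scale $\varepsilon_T$ is an $h_T$-net at scale $\sim C_\gamma\varepsilon_T$, so \eqref{eq:thm_entropy} controls the random entropy; the same comparison transfers the small-ball \eqref{eq:thm_small_ball} to an $h_T$ small-ball. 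For the evidence lower bound, for $b$ with $\|b-b_0\|_{\mu_0}\leq \varepsilon_T$ one has $[M(b)]_T \leq C T\varepsilon_T^2$ on $\Omega_{T,\gamma}$, whence Bernstein gives $M_T(b) \gtrsim -\sqrt{T}\varepsilon_T$ with high probability and thus $\ell_T(b)-\ell_T(b_0) \geq -c'T\varepsilon_T^2$; Fubini then yields a denominator bound of order $e^{-(C_3+c')T\varepsilon_T^2}$ with $P_{b_0}$-probability tending to one.

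\textbf{Combination and main obstacle.} Assembling (a)--(c) via Markov's inequality, and choosing $M_T \to \infty$ so that $cM_T^2$ dominates the constants $C_1,C_2,C_3$ from the hypotheses, the bad-set posterior mass is $O_{P_{b_0}}(e^{-c(M_T^2-1)T\varepsilon_T^2}) + \gamma + o_{P_{b_0}}(1)$; letting $\gamma\downarrow 0$ after $T\to\infty$ gives the required convergence in $P_{b_0}$-probability. The only genuinely delicate point I anticipate is the bridging step: because $h_T$ is random and the Bernstein bound is sharp in $h_T$ rather than $\|\cdot\|_{\mu_0}$, the covering, the tests, the small-ball transfer and the denominator estimate must all be carried out on the same high-probability event $\Omega_{T,\gamma}$, with $\gamma$ sent to zero only at the very end. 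This is why \eqref{eq:norm_equiv} is formulated with a `for every $\gamma>0$' quantifier --- a subtlety that the user of the theorem must then verify, as is done via Lemma \ref{lem:bilinear} in the proof of Theorem \ref{thm:contraction}.
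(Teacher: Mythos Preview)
Your proposal is correct and follows exactly the approach the paper intends: the paper does not give a proof but states that the result is a combination of Theorem 2.1 and Lemma 2.2 of \cite{vdmeulen2006}, with the martingale arguments generalising to the multidimensional setting without difficulty, and leaves the proof to the reader. Your sketch reproduces precisely this route --- the GGV testing scheme, the likelihood-ratio-as-martingale identity $\ell_T(b)-\ell_T(b_0)=M_T(b)-\tfrac{1}{2}[M(b)]_T$, Bernstein-based tests from \cite{vdmeulen2006}, and the bridging of $h_T$ and $\|\cdot\|_{\mu_0}$ on the high-probability event supplied by \eqref{eq:norm_equiv}.
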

The proof of the theorem implies in particular that the denominator in (\ref{postform}) is non-zero on events of $P_{b_0}$-probability approaching one. We now turn to the proof of Theorem \ref{thm:contraction} and verify the conditions \eqref{eq:thm_remaining_mass}-\eqref{eq:norm_equiv} of Theorem \ref{thm:contraction_general}. By Proposition \ref{as:true_drift}, $\|\cdot\|_{L^2}$ and $\|\cdot\|_{\mu_0}$ are equivalent norms. Applying Theorem 4.5 of \cite{vdvaart2008} (see also Sections 11.3 and 11.4.5 in \cite{GvdV17}), there exist measurable sets $B_T \subset V_J$ such that for $\varepsilon_T = T^{-\frac{a\wedge s}{2a+d}}(\log T)$,

(i) $\log N(B_T,\|\cdot\|_\infty,3\varepsilon_T) \leq 6CT\varepsilon_T^2$, 

(ii) $\Pi_T (b_j\not\in B_T) \leq e^{-CT\varepsilon_T^2}$ and 

(iii) $\Pi_T(\|b_j-b_{0,j}\|_\infty < 4\varepsilon_T) \geq e^{-T\varepsilon_T^2}$. 

The set $\mathcal{B}_T =  B_T^{\otimes d}$ satisfies $\log N(\mathcal{B}_T,\|\cdot\|_{\mu_0},3d^{1/2}\|\mu_0\|_\infty^{1/2} \varepsilon_T) \leq 6dCT\varepsilon_T^2$ and $\Pi_T (\mathcal{B}_T^c) \leq de^{-CT\varepsilon_T^2}$, which verifies \eqref{eq:thm_remaining_mass} and \eqref{eq:thm_entropy} for (a constant multiple of) $\varepsilon_T$. Further,
\begin{align*}
\Pi_T \left( \|b-b_0\|_{\mu_0} \leq 4\sqrt{d}\|\mu_0\|_\infty^{1/2} \varepsilon_T \right) & \geq \Pi_T \left( \sup_{j=1,\dots,d} \|b_j-b_{0,j}\|_{L^2} \leq 4 \varepsilon_T \right) \\
& \geq \prod_{j=1}^d \Pi_T \left( \|b_j-b_{0,j}\|_\infty \leq 4\varepsilon_T \right) \geq e^{-dT\varepsilon_T^2},
\end{align*}
thereby verifying \eqref{eq:thm_small_ball} for (a constant multiple of) $\varepsilon_T$.

We now verify \eqref{eq:norm_equiv}. Since $\Pi_T(V_J^{\otimes d})=1$, we may take as parameter space $\mathfrak{B}_T = V_J^{\otimes d} \cup \{b_0\}$. Let $b_{0,j,J}$ denote the orthogonal projection of $b_{0,j}$ onto $V_J$ and set $b_{0,J} = (b_{0,1,J},\dots, b_{0,d,J})$. Since $b_0\in C^s$, $\|b_{0,j}-b_{0,j,J}\|_\infty \leq C(b_0) 2^{-Js} \leq C(b_0)\varepsilon_T$, so that for our choice of $J$ this yields $h_T(b_0,b_{0,J}) \leq r\varepsilon_T$ and $\|b_0-b_{0,J}\|_{\mu_0} \leq \|\mu_0\|_\infty^{1/2} \|b_0-b_{0,J}\|_{L^2} \leq r\varepsilon_T$ for some $r = r(d,b_0)$. By considering the cases $b_1 \in V_J^{\otimes d}$ and $b_1 = b_0$ separately, the event in \eqref{eq:norm_equiv} therefore contains the event
\begin{align*}
& \{ c_\gamma \|b-b_{0,J}\|_{\mu_0} + c_\gamma r\eps_T \leq h_T(b,b_{0,J}) - r\eps_T \\
& \quad \text{ and } h_T(b,b_{0,J}) + r\eps_T \leq C_\gamma \|b-b_{0,J}\|_{\mu_0} - C_\gamma r \eps_T, \\
&  \quad  \forall b\in V_J^{\otimes d} \textup{ with } h_T(b,b_{0,J}) \geq (D_\gamma -r) \varepsilon_T \} \\
&  \cap \{  h_T(b_1,b_2) \leq C_\gamma \|b_1-b_2\|_{\mu_0}, \forall b_1,b_2\in V_J^{\otimes d} \textup{ with } h_T(b_1,b_2) \geq D_\gamma \varepsilon_T \}.
\end{align*}
For $D_\gamma$ large enough that $(D_\gamma -r) \geq \max\{ (C_\gamma+1)r,2(c_\gamma+1)r\}$, the last event contains
\begin{align*}
& \{ 2c_\gamma \|b-b_{0,J}\|_{\mu_0} \leq  h_T(b,b_{0,J}) \leq \tfrac{1}{2} C_\gamma \|b-b_{0,J}\|_{\mu_0}, \\
& \quad \forall b\in V_J^{\otimes d} \textup{ with } h_T(b,b_{0,J}) \geq (D_\gamma -r) \varepsilon_T \} \\
&  \cap \{  h_T(b_1,b_2) \leq C_\gamma \|b_1-b_2\|_{\mu_0}, \forall b_1,b_2\in V_J^{\otimes d} \textup{ with } h_T(b_1,b_2) \geq D_\gamma \varepsilon_T \} \\
& \supset \{ 2c_\gamma \|b_1-b_2\|_{\mu_0} \leq  h_T(b_1,b_2) \leq \tfrac{1}{2} C_\gamma \|b_1-b_2\|_{\mu_0},  \forall b_1,b_2 \in V_J^{\otimes d}  \}
\end{align*}
since $b_{0,J} \in V_J^{\otimes d}$. It thus suffices to lower bound the $P_{b_0}$-probability of the last event.  For $C_\gamma>2$ and $0<c_\gamma<1/2$, this probability equals
\begin{equation}\label{eq:norm_equiv_prob}
\begin{split}
& P_{b_0} \left( 4c_\gamma^2-1 \leq \frac{h_T^2(b_1,b_2)}{\|b_1-b_2\|_{\mu_0}^2} -1 \leq \frac{1}{4}C_\gamma^2-1, \, \,\forall b_1,b_2 \in V_J^{\otimes d}, b_1 \neq b_2 \right) \\
& \geq 1 - P_{b_0} \left( \sup_{b_1,b_2\in V_J^{\otimes d}: b_1\neq b_2} \frac{|h_T^2(b_1,b_2)-\|b_1-b_2\|_{\mu_0}^2|}{\|b_1-b_2\|_{\mu_0}^2}  > K_\gamma \right),
\end{split}
\end{equation}
where $K_\gamma = \min \{ 1-4c_\gamma^2 , C_\gamma^2/4-1\}>0$. Since $b_0\in C^s, s> \max(d/2,1)$, Lemma \ref{lem:bilinear} with $x = \sqrt{2}M_02^{Jd/2}\rightarrow \infty$ and $M_0>1$ large enough yields
\begin{align*}
& P_{b_0} \left( \sup_{b_1,b_2 \in V_J^{\otimes d}:b_1\neq b_2} \left| \frac{h_T^2(b_1,b_2)-\|b_1-b_2\|_{\mu_0}^2}{\|b_1-b_2\|_{\mu_0}^2}\right| \geq \frac{CM_0}{\sqrt{T}} 2^{J[d+(d/2+\kappa-1)_+]} \right) \\
& \quad \quad \leq d e^{(c_0'-M_0^2) 2^{Jd}} \rightarrow 0,
\end{align*}
where $0<\kappa<s-d/2+1$ (or $\kappa=0$ if $d=1$). Since $T^{-1/2} 2^{J[d+(d/2+\kappa-1)_+]}  \rightarrow 0$ as $T\rightarrow \infty$ for $a>\max(d-1,1/2)$ and $\kappa>0$ small enough, the right-hand side of \eqref{eq:norm_equiv_prob} equals $1-o_{P_{b_0}}(1)$ as $T\rightarrow \infty$. This verifies \eqref{eq:norm_equiv} for $C_\gamma >2$, $0<c_\gamma<1/2$ and $D_\gamma>0$ large enough, so that applying Theorem \ref{thm:contraction_general} yields posterior contraction rate $\eps_T$ in the Hellinger distance $h_T$.

We have shown above that for $C_\gamma>2$ and $0<c_\gamma<1/2$,
$$ P_{b_0} ( 2c_\gamma \|b_1-b_2\|_{\mu_0} \leq  h_T(b_1,b_2) \leq \tfrac{1}{2} C_\gamma \|b_1-b_2\|_{\mu_0},  \forall b_1,b_2 \in V_J^{\otimes d})\to 1.$$
Using again the bias bounds $h_T(b_0,b_{0,J}) \leq r\varepsilon_T$ and $\|b_0-b_{0,J}\|_{\mu_0} \leq r\varepsilon_T$, with $P_{b_0}$-probability tending to one it holds that
$$\{ b\in V_J^{\otimes d} :\|b-b_0\|_{\mu_0} \geq \tilde{M}_T\eps_T  \} \subseteq \{ b\in V_J^{\otimes d}: h_T(b,b_0)\geq M_T \eps_T\}$$
for $\tilde{M}_T = (M_T + r)/(2c_\gamma) + r$. Since the posterior probability of the last set tends to zero in $P_{b_0}$-probability, this completes the proof of Theorem \ref{thm:contraction}.

\subsection{Proofs of Theorems \ref{super}-\ref{mapas}}

The proof of Theorem \ref{super} is based on combination of Theorem \ref{thm:contraction} -- which allows an initial localisation of the posterior distribution in a neighbourhood contracting about $b_0$ in $L^2$-norm via (\ref{tvlim}) below -- with the key Lemma \ref{maxwav}(i) (which itself follows from `quantitiative' semiparametric techniques developed in Section \ref{auxo}). Once Theorem \ref{super} is proved one can refine Lemma \ref{maxwav} (see its Part (ii)) and apply it to the $\|\cdot\|_\infty$-localised posterior, from which one can derive Theorems \ref{bvm1} and \ref{mapas}.

\subsubsection{Localisation and a key lemma}\label{sec:local}

We will repeatedly use the following basic fact that allows to `localise' the posterior distribution to sets $\mathcal D_T$ of high frequentist posterior probability: let $\mathcal D_T$ be any measurable set in the support of the prior satisfying $\Pi(\mathcal D_T|X^T) = 1-o_{P_{b_0}}(1)$ as $T\rightarrow \infty$, let $$\Pi^{\mathcal D_T}(\cdot)=\Pi(\cdot \cap \mathcal D_T)/\Pi(\mathcal D_T)$$ denote the prior conditioned to $\mathcal D_T$ and let $\Pi^{\mathcal D_T}(\cdot|X^T)$ denote the posterior distribution arising from prior $\Pi^{\mathcal D_T}$. By a standard inequality (\cite{vdvaart1998}, p.~142),
\begin{align} \label{tvlim}
\sup_{A \text{ measurable}} |\Pi(A|X^T)-\Pi^{\mathcal D_T}(A|X^T)| \leq 2\Pi (\mathcal D_T^c|X^T) \rightarrow_{T \to \infty}^{P_{b_0}} 0.
\end{align}

The relevant choices $D_T$ and $\bar D_T$ for $\mathcal D_T$, to be defined below, depend on a further choice $\Gamma_T \subset V_J^{\otimes d}$ of vector fields $\gamma$ admitting envelopes 
\begin{equation}\label{enve}
|\Gamma_T|_2 \geq \sup_{\gamma\in\Gamma_T} \|\gamma\|_{L^2}, \quad \quad ~\sigma_{\Gamma_T} \ge \sup_{\gamma \in \Gamma_T}\|\gamma\|_\mathbb H,
\end{equation}
where the RKHS norm $\|\cdot\|_\mathbb H$ arises from (\ref{eq:RKHS_inner_prod}) with $\sigma_l = 2^{-l(\alpha+d/2)}$. For any $M>0$ and $\varepsilon_T = T^{-\frac{a \wedge s}{2a+d}}(\log T)$ as in Theorem \ref{thm:contraction}, define
\begin{align}\label{D_T}
D_T  = \Big\{ b \in V_J^{\otimes d}: \|b-b_{0}\|_{L^2} \leq M_T \varepsilon_T,\, \sup_{\gamma \in \Gamma_T} |\langle b, \gamma \rangle_{\H}| \leq M \sqrt{T}\varepsilon_T \sigma_{\Gamma_T} \Big\},
\end{align}
where $M_T \to \infty$  arbitrarily slowly; and for $\bar M_T = (\log T)^{\delta-1}, \delta>5/2$, define
\begin{equation}\label{barD_T}
\bar D_T = \Big\{ b \in V_J^{\otimes d}: \|b-b_{0}\|_{\infty} \leq \bar M_T \varepsilon_T, ~\sup_{\gamma \in \Gamma_T} |\langle b, \gamma \rangle_{\H}| \leq M \sqrt{T}\varepsilon_T \sigma_{\Gamma_T} \Big\}.
\end{equation}

For $\lambda \leq J$, $1 \leq j \leq d$ and $a_\lambda > 0$ to be chosen, define the vector fields $\tilde{\Phi}_{\lambda,k,j} = (\tilde{\Phi}_{\lambda,k,j,1},\dots,\tilde{\Phi}_{\lambda,k,j,d}):\T^d \rightarrow \R^d$ with
\begin{align*}\label{eq:least_fav_dir}
\tilde{\Phi}_{\lambda,k,j,i}  =  \begin{cases}
a_\lambda P_{V_J}[\Phi_{\lambda,k}/\mu_0] & i=j,\\
0 & i\neq j.
\end{cases}
\end{align*}
Thus $\tilde{\Phi}_{\lambda,k,j}$ is the vector field which projects $a_\lambda \Phi_{\lambda,k}/\mu_0$ onto $V_J$ in the $j$-th coordinate and is uniformly zero on all other coordinates. Denote the collection of all such functions by 
\begin{equation}\label{gama_main}
\Gamma_T = \{ \tilde{\Phi}_{\lambda,k,j} : \lambda \leq J, k, 1\leq j \leq d\} \subset V_J^{\otimes d}.
\end{equation}
By Lemma \ref{lem:change_of_meas_L_inf} and for $\Gamma_T$ as above we can take the envelopes from (\ref{enve}) as
\begin{equation}\label{enve_choice}
|\Gamma_T|_2 = C(d,\mu_0) \max_{\lambda \le J} a_\lambda, \quad \sigma_{\Gamma_T} = C(d,\mu_0,\Phi) 2^{J(\alpha+d/2)} \max_{\lambda \le J} a_\lambda.
\end{equation}
The following central lemma will be proved in Section \ref{aux3} below. 

\begin{lemma}\label{maxwav}
(i) Assume the conditions of Theorem \ref{super} and let $D_T$ be the set from (\ref{D_T}) with $\Gamma_T$ as in (\ref{gama_main}), envelope $\sigma_{\Gamma_T}$ as in \eqref{enve_choice} and with 
$$a_\lambda = \begin{cases} 
      2^{\lambda d/2}2^{-Jd/2} (\log T)^{-\eta} & \text{ if } d\leq 4,\\
      2^{\lambda d/2} 2^{-J(d-2)} (\log T)^{-\eta} & \text{ if } d\geq  5,
   \end{cases}$$
for any $\eta>1$. Then $\Pi(D_T|X^T) = 1-o_{P_{b_0}}(1)$ as $T\to\infty$. Moreover if $b \sim \Pi^{D_T}(\cdot|X^T)$ then for all $\lambda \le J$, as $T \to \infty$,
\begin{equation}\label{nfull}
E^{\Pi^{D_T}}\big[ \max_{k,j} \sqrt T |\langle b_j-b_{0,j}, a_\lambda \Phi_{\lambda, k}\rangle_{L^2}||X^T\big] =O_{P_{b_0}}(\sqrt \lambda),
\end{equation}
\begin{equation} \label{full}
E^{\Pi^{D_T}}\big[ \max_{\lambda \le J,k,j} \sqrt T |\langle b_j-b_{0,j}, a_\lambda \Phi_{\lambda, k}\rangle_{L^2}||X^T\big] =O_{P_{b_0}}(\sqrt J).
\end{equation}
(ii) Assume the conditions of Theorem \ref{bvm1} and let $\bar D_T$ be the set from (\ref{barD_T}) with $\Gamma_T$ as in (\ref{gama_main}) with $a_\lambda=1$ for all $\lambda$ and envelope $\sigma_{\Gamma_T}$ as in \eqref{enve_choice}. Then $\Pi(\bar D_T|X^T) = 1-o_{P_{b_0}}(1)$ as $T \to \infty$ and if $b \sim \Pi^{\bar D_T}(\cdot|X^T)$, then for all $\lambda \le J$,
\begin{equation*}\label{nfull2}
E^{\Pi^{\bar D_T}}\big[ \max_{k,j} \sqrt T |\langle b_j-b_{0,j}, \Phi_{\lambda, k}\rangle_{L^2}||X^T\big] =O_{P_{b_0}}(\sqrt \lambda).
\end{equation*}
\end{lemma}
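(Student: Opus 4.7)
\emph{Plan.} The argument has two main moving parts: establishing that the localising sets $D_T$ and $\bar D_T$ trap the posterior with probability tending to one, and then controlling posterior moments on these sets via Gaussian semiparametric expansions.

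For part (i), the inclusion $\{\|b-b_0\|_{L^2}\le M_T\varepsilon_T\}$ follows directly from Theorem \ref{thm:contraction}, using the equivalence of $\|\cdot\|_{L^2}$ and $\|\cdot\|_{\mu_0}$ from Proposition \ref{as:true_drift}. For the RKHS constraint, I would exploit that $\Pi(\cdot|X^T)$ is Gaussian on the finite-dimensional space $V_J^{\otimes d}$, so each linear functional $\langle b,\gamma\rangle_{\mathbb H}$ is a scalar Gaussian. Its mean $\langle\hat b_T,\gamma\rangle_{\mathbb H}$ is bounded by $\|\hat b_T\|_{\mathbb H}\sigma_{\Gamma_T}\lesssim\sqrt T\varepsilon_T\sigma_{\Gamma_T}$, using that the MAP estimator (\ref{mapest}) satisfies $\|\hat b_T\|_{\mathbb H}=O_{P_{b_0}}(\sqrt T\varepsilon_T)$ as a standard concentration by-product of Theorem \ref{thm:contraction} and the defining optimisation problem. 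The posterior variance of $\langle b-\hat b_T,\gamma\rangle_{\mathbb H}$ is at most $\|\gamma\|_{\mathbb H}^2\le\sigma_{\Gamma_T}^2$, so Gaussian concentration together with a union bound over the $O(d 2^{Jd})$ elements of $\Gamma_T$ produce a fluctuation of order $\sigma_{\Gamma_T}\sqrt{Jd}=o(\sqrt T\varepsilon_T\sigma_{\Gamma_T})$ on the relevant event. The argument for $\bar D_T$ in part (ii) is identical, substituting Theorem \ref{super} for Theorem \ref{thm:contraction}.

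For the maximal bounds (\ref{nfull})--(\ref{full}), I would apply Proposition \ref{prop:laplace_trans} to the coordinate test fields $h_{\lambda,k,j}=a_\lambda\Phi_{\lambda,k} e_j$ (with $e_j$ the $j$-th Euclidean basis vector). On $D_T$ this yields, uniformly in $\lambda\le J$, $k$, $j$, a subgaussian bound for $\sqrt T\langle b_j-\hat b_{T,j},a_\lambda\Phi_{\lambda,k}\rangle_{L^2}$ with parameter $\tau_\lambda^2=O(a_\lambda^2\|\Phi_{\lambda,k}/\sqrt{\mu_0}\|_{L^2}^2)=O(a_\lambda^2)$, the $O(1)$ coming from the LAN-information inner product $\langle\cdot,\cdot\rangle_{\mu_0}$ and the boundedness of $1/\mu_0$ (Proposition \ref{as:true_drift}). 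Since $a_\lambda\le 1$ in both regimes considered, a standard Gaussian maximum bound over the $O(d 2^{\lambda d})$ test fields gives the $\sqrt\lambda$ order in (\ref{nfull}), and taking the union over $\lambda\le J$ yields (\ref{full}). Restoring the bias $\sqrt T\langle\hat b_T-b_0,a_\lambda\Phi_{\lambda,k}\rangle_{L^2}$ by the triangle inequality and controlling it via Lemma \ref{lem:semipara_bias} together with Corollary \ref{MAP} closes the argument. Part (ii) with $a_\lambda\equiv 1$ is entirely analogous and simpler, because the $\|\cdot\|_\infty$-localisation in $\bar D_T$ already suppresses the bias directly without requiring any wavelet-level damping.

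The main obstacle is the calibration of $a_\lambda$ so that the bias is absorbed into the stochastic order $\sqrt\lambda$. Using $\|\Phi_{\lambda,k}\|_{L^1}\asymp 2^{-\lambda d/2}$ together with the MAP's $\|\cdot\|_\infty$-rate from Corollary \ref{MAP}, one finds that the two stated choices $a_\lambda=2^{\lambda d/2}2^{-Jd/2}(\log T)^{-\eta}$ for $d\le 4$ and $a_\lambda=2^{\lambda d/2}2^{-J(d-2)}(\log T)^{-\eta}$ for $d\ge 5$ render the bias of order $(\log T)^{-(\eta-\delta)}\to 0$; the dimensional split is forced by the sub-optimality of the $L^\infty$-rate of $\hat b_T$ for $d\ge 5$, reflected in the appearance of $(d/2-2)_+$ in Theorem \ref{super}. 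A secondary technical point is uniformity in $\gamma\in\Gamma_T$ of the Laplace transform remainder in Proposition \ref{prop:laplace_trans}, which requires exploiting $\log|\Gamma_T|\lesssim Jd\ll T\varepsilon_T^2$ so that generic exponentially small errors survive a union bound over $\Gamma_T$ and remain $o_{P_{b_0}}(1)$.
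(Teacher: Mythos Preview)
Your sketch has a structural circularity that would prevent it from working for part (i). You propose to control the ``bias'' term $\sqrt T\langle\hat b_{T,j}-b_{0,j},a_\lambda\Phi_{\lambda,k}\rangle_{L^2}$ via the $\|\cdot\|_\infty$-rate of $\hat b_T$ from Corollary \ref{MAP}. But that rate is a consequence of Theorem \ref{super}, and Theorem \ref{super} is itself proved by invoking Lemma \ref{maxwav}(i). So you cannot use the $L^\infty$-rate here; only the $L^2$-rate is available at this stage, and the crude bound $\sqrt T\,a_\lambda\|\hat b_T-b_0\|_{L^2}$ blows up like $T^{d/(2(2a+d))}$.

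The paper avoids this by not centring at $\hat b_T$ at all. Proposition \ref{prop:laplace_trans} does \emph{not} deliver subgaussian bounds for $\sqrt T\langle b_j-\hat b_{T,j},a_\lambda\Phi_{\lambda,k}\rangle_{L^2}$: it expands the Laplace transform of $G(b)=\langle b_j-b_{0,j},a_\lambda\Phi_{\lambda,k}\rangle_{L^2}$ around the martingale $T^{-1/2}\int_0^T\tilde\Phi_{\lambda,k,j}(X_t).dW_t$, with a residual term $S_T(b)=u\sqrt T\,a_\lambda\langle\mu_0(b_j-b_{0,j}),\Phi_{\lambda,k}/\mu_0-P_{V_J}[\Phi_{\lambda,k}/\mu_0]\rangle_{L^2}$. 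Lemma \ref{lem:semipara_bias} is there precisely to bound this projection mismatch $S_T$---not to bound any bias of $\hat b_T$---and its first inequality uses only the $L^2$-norm $\|b_j-b_{0,j}\|_{L^2}\le M_T\varepsilon_T$ available on $D_T$. The choice of $a_\lambda$ (with the dimension split at $d=4$) is calibrated so that $\sup_{b\in D_T}|S_T(b)|=o(|u|)$ using only $L^2$-information. One then gets a subgaussian bound for $Z_{\lambda,k,j}=\langle b_j-b_{0,j},a_\lambda\Phi_{\lambda,k}\rangle_{L^2}-T^{-1}\int_0^T\tilde\Phi_{\lambda,k,j}(X_t).dW_t$, and the martingale part is handled separately via Bernstein's inequality on the event (\ref{eq:event_A}). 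Only in part (ii), \emph{after} Theorem \ref{super} has been established via part (i), can one localise to $\bar D_T$ and use the $L^\infty$-version of Lemma \ref{lem:semipara_bias}; this is why the two parts of the lemma are stated and proved in that order.

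A smaller point: for $\Pi(D_T|X^T)\to 1$, the paper bounds the \emph{prior} probability of the RKHS constraint (via Dudley's entropy bound and the Borell--Sudakov--Tsirelson inequality, since $\langle b,\gamma\rangle_{\mathbb H}\sim N(0,\|\gamma\|_{\mathbb H}^2)$ under $\Pi$) and then transfers to the posterior by Lemma \ref{lem:post_prob_vanish}. Your route through the posterior mean would require $\|\hat b_T\|_{\mathbb H}=O_{P_{b_0}}(\sqrt T\varepsilon_T)$, which is plausible but not proved in the paper; the prior-probability route sidesteps this entirely.
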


\subsubsection{Proof of Theorem \ref{super}}

Take the set $D_T$ from (\ref{D_T}) with $\Gamma_T$, $\sigma_{\Gamma_T}$ as in Lemma \ref{maxwav}(i). Then by that Lemma  and (\ref{tvlim}) with $\mathcal D_T = D_T$, it suffices to prove Theorem \ref{super} for $b$ drawn from the localised posterior distribution $\Pi^{D_T}(\cdot |X^T)$. Denote by $P_{V_J}$, $P_{V_J^{\otimes d}}$ the projection operators onto $V_J$, $V_J^{\otimes d}$, respectively. Setting $$\tilde{\varepsilon}_T = (\log T)^\delta T^{-\frac{s\wedge[a-(d/2-2)_+]}{2a+d}}$$ and applying Markov's inequality,
\begin{align}\label{nebd}
&   \Pi^{D_T} \big( b: \sum_{j=1}^d \|b_j-b_{0,j}\|_{\infty} \geq \tilde{\eps}_T \big| X^T \big)  \leq  \tilde \varepsilon_T^{-1} \sum_{j=1}^d E^{\Pi^{D_T}}[\|b_j-b_{0,j}\|_\infty |X^T] \notag \\
&  \leq \tilde \varepsilon_T^{-1} \sum_{j=1}^d  E^{\Pi^{D_T}}[\|b_j-P_{V_J} [b_{0,j}]\|_\infty |X^T]~+ \tilde \varepsilon_T^{-1}  \sum_{j=1}^d \|P_{V_J} [b_{0,j}]-b_{0,j}\|_\infty.
\end{align}
Since $b_0 \in C^s$, the second sum is of order $O(2^{-Js}) = O(T^{-\frac{s}{2a+d}}) = o(\tilde{\varepsilon}_T)$ by standard results in approximation theory (cf.~after (\ref{wavbas}) above). Suppose first that $d\leq 4$ and let $a_\lambda = 2^{\lambda d/2}2^{-Jd/2} (\log T)^{-\eta}$ for some $\eta > 1$. The standard inequality $\sup_x \sum_k |\Phi_{\lambda,k}(x)| \lesssim 2^{\lambda d/2}$ (Section 4.3 in \cite{ginenickl2016}) now gives
\begin{equation*}\label{eq:L_inf_main_bound}
\begin{split}
\sum_{j=1}^d \|b_j-P_{V_J}[b_{0,j}]\|_\infty & =  \sum_{j=1}^d \sup_x \Big| \sum_{\lambda\leq J} \sum_k \langle b_j-b_{0,j} ,\Phi_{\lambda,k} \rangle_{L^2} \Phi_{\lambda,k}(x) \Big| \\
& \lesssim  \sum_j \sum_{\lambda\leq J} \frac{2^{\lambda d/2}}{\sqrt{T}} \max_{k} \sqrt{T}|\langle b_j-b_{0,j},\Phi_{\lambda,k}\rangle_{L^2}| \\
& = \frac{1}{\sqrt{T}} \sum_j \sum_{\lambda\leq J} 2^{\lambda d/2} a_\lambda^{-1} \max_{k} \sqrt{T}|\langle b_j-b_{0,j},a_\lambda \Phi_{\lambda,k}\rangle_{L^2}| \\
& \lesssim  \frac{J2^{Jd/2}(\log T)^\eta}{\sqrt{T}} \max_{\lambda\leq J, k,j} \sqrt{T}|\langle b_j-b_{0,j},a_\lambda \Phi_{\lambda,k}\rangle_{L^2}|.
\end{split}
\end{equation*}
Taking posterior expectations in the last inequality, Lemma \ref{maxwav}(i) implies that the first term in (\ref{nebd}) is bounded by
\begin{align*}
& \tilde{\varepsilon}_T^{-1} \frac{J2^{Jd/2}(\log T)^\eta}{\sqrt{T}} E^{\Pi^{D_T}} \Big[  \max_{\lambda\leq J, k,j} \sqrt{T}|\langle b_j-b_{0,j},a_\lambda \Phi_{\lambda,k}\rangle_{L^2}| \big|X^T \Big]  \\
& =O_{P_{b_0}} \Big(\tilde{\varepsilon}_T^{-1} \frac{J^{3/2} 2^{Jd/2}(\log T)^\eta}{\sqrt{T}} \Big) = O_{P_{b_0}} \Big((\log T)^{3/2+\eta-\delta} \Big).
\end{align*}
Taking $\delta>3/2+\eta$ completes the proof when $d \le 4$ since $\eta>1$ was arbitrary. If $d> 4$, we set $a_\lambda = 2^{\lambda d/2} 2^{-J(d-2)} (\log T)^{-\eta}$ for $\eta>1$ and use again Lemma \ref{maxwav}(i) to obtain, as $T \to \infty$, the convergence to zero of
\begin{align*}
\tilde{\varepsilon}_T^{-1} \sum_j E^{\Pi^{D_T}} [\|b_j-P_{V_J} b_{0,j}\|_\infty |X^T]  =O_{P_{b_0}} \Big( \frac{J^{\frac{3}{2}} 2^{J(d-2)}(\log T)^\eta}{\tilde{\varepsilon}_T \sqrt{T}}\Big) = o_{P_{b_0}} (1).
\end{align*}

\subsubsection{Proof of Theorems \ref{bvm1} and \ref{mapas}}

Let $b \sim \Pi^{\bar D_T} (\cdot|X^T)$ conditionally on $X^T$, where $\bar D_T$ is the event from (\ref{barD_T}) with $\Gamma_T, \sigma_{\Gamma_T}$ chosen as in Lemma \ref{maxwav}(ii). Then by that lemma and (\ref{tvlim}) with $\mathcal D_T = \bar D_T$, it suffices to prove Theorem \ref{bvm1} for  $\Pi^{\bar D_T}(\cdot|X^T)$ in place of $\Pi(\cdot|X^T)$. 

Denote the centred ball of radius $r$ in $B^\rho_{1\infty}=B^{\rho, \otimes d}_{1\infty}$ by $\mathcal B^\rho(r)$ and let $\eta= (\eta_1, \dots, \eta_d) \in \mathcal B^\rho(1)$. For projections $$P_{V_J^{\otimes d}}[\eta/\mu_{0}]=(P_{V_J}[\eta_j/\mu_{0}]: j=1, \dots, d),$$ define the centring process $( \hat G_J(\eta) \equiv \langle \hat G_J, \eta \rangle_{L^2}  :  \eta \in \mathcal B^\rho(1))$ by
\begin{equation*}
 \langle \hat G_J, \eta \rangle_{L^2} = \sum_{j=1}^d \langle \hat G_{J,j}, \eta_j \rangle_{L^2} = \langle b_0, \eta \rangle_{L^2} + \frac{1}{T} \int_0^T P_{V_J^{\otimes d}}[\eta/\mu_{0}] (X_t).dW_t,
\end{equation*}
where the notation $\langle \hat G_J, \cdot \rangle_{L^2}$ is justified by linearity of the stochastic integral. Next define stochastic processes $$(Z_1(\eta) = \sqrt T(\langle b,\eta \rangle_{L^2} - \hat G_J(\eta)): \eta \in \mathcal B^\rho(1)), \quad (Z_2(\eta): \eta \in \mathcal B^\rho(1)),$$ where $Z_2$ has (cylindrical) law $\mathcal N_{b_0}$, and denote the (conditional) law of $Z_1$ by $\bar \Pi^{\bar D_T}=\bar \Pi^{\bar D_T}(\cdot|X^T)$. Both processes prescribe linear actions on $\mathcal B^\rho(1)$  -- this is clear for $Z_1$ and follows also for $Z_2$ as explained before (\ref{kC}) below. The estimates that follow imply moreover that the $Z_i$ define proper random variables in $(B^\rho_{1\infty})^*$. For $\kappa \in \mathbb N$ to be chosen, define probability measures $\bar \Pi^{\bar D_T}_\kappa, \mathcal N_{b_0,\kappa}$ as the laws of the stochastic processes
\begin{equation}\label{projection}
P_{(\kappa)}(Z_i)\equiv(Z_i(P_{V_\kappa^{\otimes d}}[\eta]): \eta \in \mathcal B^\rho(1)), \quad i=1,2,
\end{equation}
which, as projections, are defined on the same probability space as the $Z_i$'s. Using the triangle inequality for the metric $\beta=\beta_{(B^\rho_{1\infty})^*}$ we obtain
\begin{align}  \label{blconv}
\beta(\bar \Pi^{\bar D_T}, \mathcal N_{b_0}) & \le \beta (\bar \Pi^{\bar D_T}_\kappa, \mathcal N_{b_0,\kappa}) + \beta(\bar \Pi^{\bar D_T}, \bar \Pi^{\bar D_T}_\kappa) + \beta(\mathcal N_{b_0}, \mathcal N_{b_0, \kappa}) \notag \\
&\notag =  \beta_{V^{\otimes d}_\kappa} (\bar \Pi^{\bar D_T}_\kappa, \mathcal N_{b_0,\kappa})  + \sum_{i=1}^2\sup_{\|F\|_{Lip}\le 1}|E[F(Z_i) -F(P_{(\kappa)} (Z_i))]|  \\
&\le \beta_{V^{\otimes d}_\kappa} (\bar \Pi^{\bar D_T}_\kappa, \mathcal N_{b_0,\kappa}) +  \sum_{i=1}^2  E\|Z_i - P_{(\kappa)}(Z_i)\|_{(B^\rho_{1\infty})^*} = A+B+C.
\end{align}
For term $B$, we use Parseval's identity and the fact that (cf.~after (\ref{wavbas})) 
\begin{equation}\label{wavo}
\|\eta\|_{B^\rho_{1\infty}}\le 1 \Rightarrow \sum_j \sum_{r}|\langle \eta_j, \Phi_{l,r}\rangle_{L^2}| \lesssim 2^{-l(\rho-d/2)} \quad \forall l
\end{equation} to obtain, with $E=E^{\Pi^{\bar D_T}}[\cdot|X^T]$,
\begin{align} \label{kB}
&E\|Z_1-P_{(\kappa)}(Z_1)\|_{(B^\rho_{1\infty})^*}  = E \sup_{\|\eta\|_{B^\rho_{1\infty}} \le 1} \sqrt T \left | \langle b- \hat G_J, \eta - P_{V^{\otimes d}_\kappa}[\eta] \rangle_{L^2}\right| \notag \\
& \lesssim \sum_{\kappa<\lambda} 2^{-\lambda(\rho-d/2)} E\max_{k,j}\sqrt T \left | \langle b_j- \hat G_{J,j}, \Phi_{\lambda,k}\rangle_{L^2} \right| \notag \\
& \lesssim \sum_{\kappa<\lambda} 2^{-\lambda(\rho-d/2)} E\max_{k,j}\sqrt T \left | \langle b_j- b_{0,j}, \Phi_{\lambda,k}\rangle_{L^2} \right| \\
& \quad \quad +  \sum_{\kappa<\lambda} 2^{-\lambda(\rho-d/2)} \max_{k,j}\sqrt T \left | \langle \hat G_{J,j} - b_{0,j}, \Phi_{\lambda,k}\rangle_{L^2} \right|. \notag
\end{align}
By Lemma \ref{maxwav}(ii) and the usual decay bound for wavelet coefficients of $b_0 \in C^s$, the first sum is bounded in $P_{b_0}$-probability by $$\sum_{\kappa<\lambda \le J,j} 2^{-\lambda(\rho-d/2)} \sqrt \lambda + \sqrt T\sum_{\lambda >J} 2^{-\lambda(\rho+s)} = o(1)$$ as $T\to \infty$ and $\kappa \to \infty$, since $\rho >d/2$ and $s\geq a$. To deal with the second sum, note that by definition $$\sqrt T \langle \hat G_{J,j}-b_{0,j}, \Phi_{\lambda,k}\rangle_{L^2} = \frac{1}{\sqrt T} \int_0^T P_{V_J}[\Phi_{\lambda,k}/\mu_0](X_t)dW_t^j.$$ Arguing as after (\ref{eq:event_A}) below, Bernstein's inequality (\ref{eq:Bernstein}) implies that these variables are subgaussian under $P_{b_0}$ with variance proxy bounded by $$\|P_{V_J}[\Phi_{\lambda,k}/\mu_0]\|_{\mu_0}^2 + \frac{1}{T}\int_0^T |P_{V_J}[\Phi_{\lambda,k}/\mu_0](X_t)|^2 dt -   \|P_{V_J}[\Phi_{\lambda,k}/\mu_0]\|_{\mu_0}^2.$$ The first quantity is bounded by $\|\Phi_{\lambda,k}\|_{L^2} \|1/\mu_0\|_\infty \lesssim 1$ whereas Proposition \ref{prop:laplace_trans}(ii) implies that the second quantity is
$O_{P_{b_0}}(\tilde R_T)  =O_{P_{b_0}}(1)$ uniformly over $\lambda,k$ for our choice of $J,s$.  Thus by the usual subgaussian maximal inequality (Lemma 2.3.4 in \cite{ginenickl2016}), the last term in (\ref{kB}) is $O_{P_{b_0}}(\sum_{\lambda>J} 2^{-\lambda(\rho-d/2)} \sqrt \lambda)= o_{P_{b_0}}(1)$ for $\rho>d/2$, so that the last sum in (\ref{kB}) is $o_{P_{b_0}}(1)$ as $\kappa, T \to \infty$.

For term $C$, we first note that $\mathcal N_{b_0}$ defines a tight Gaussian probability measure in the space of bounded functions on $\mathcal B^\rho(1)$ (using Theorem 2.3.7, Proposition 2.1.5 and (4.184) in \cite{ginenickl2016}), and arguing as in Theorem 3.7.28 in \cite{ginenickl2016} one shows further that $\mathcal N_{b_0}$ extends to a Gaussian probability measure on $(B^\rho_{1\infty})^*$. In particular, a version of $Z_2$ exists that acts linearly on $\mathcal B^\rho(1)$. Define $\Phi_{\lambda,k,j} = (0,\dots,0,\Phi_{\lambda,k},0,\dots,0):\T^d \rightarrow \R^d$, where the non-zero coordinate occurs in the $j^{th}$ entry. Then, recalling the definition \eqref{projection} of $P_{(\kappa)}(Z_2)$ and using again (\ref{wavo}) and the standard subgaussian maximal inequality, now for the variables $(Z_2(\Phi_{\lambda,k,j}) \sim N(0, \|\Phi_{\lambda,k}\|_{1/\mu_0}^2))$,
\begin{align}\label{kC}
&E\|Z_2-P_{(\kappa)}(Z_2)\|_{(B^\rho_{1\infty})^*}  = E \sup_{\|\eta\|_{B^\rho_{1\infty}} \le 1}  \left | Z_2 (\eta - P_{V^{\otimes d}_\kappa}[\eta])\right| \notag \\
& \lesssim \sum_{\kappa<\lambda} 2^{-\lambda(\rho-d/2)} E\max_{k,j} \left | Z_2(\Phi_{\lambda,k, j}) \right|  \lesssim \sum_{\kappa<\lambda} 2^{-\lambda(\rho-d/2)} \sqrt \lambda =_{\kappa \to \infty} o(1).
\end{align}
For term $A$ we show convergence of the finite-dimensional distributions by the semi-parametric techniques from Section \ref{auxo}: consider the basis $(\Phi_{\lambda,k,j}: k, j;\lambda \le \kappa)$ of  $V^{\otimes d}_\kappa$ for $\kappa$ fixed. We apply Proposition  \ref{prop:laplace_trans}(iii) with $\gamma=P_{V_J^{\otimes d}}[\Phi_{\lambda,k,j}/\mu_{0}]$, then Lemma \ref{lem:change_of_meas}(ii) and the third part of Lemma \ref{lem:semipara_bias} to obtain
\begin{align*} \label{lapsg}
& E^{\Pi^{\bar D_T}}\left[e^{u \sqrt T (\langle b-b_0,\Phi_{\lambda,k,j}/\mu_{0} \rangle_{\mu_{0}}) - (u/\sqrt T) \int_0^T P_{V_J^{\otimes d}}[\Phi_{\lambda,k,j}/\mu_0](X_t).dW_t} \big|X^T \right] \\
& = C_T \exp\left\{\frac{u^2}{2}\int_{\T^d}\|P_{V_J^{\otimes d}}[\Phi_{\lambda,k,j}/\mu_{0}]\|^2d\mu_{0}\right\},
\end{align*}
where we can take the envelopes in Lemma \ref{lem:change_of_meas}(ii) to satisfy $|\Gamma_T|_2\lesssim 1, \varepsilon_T \sigma_{\Gamma_T}=o(1)$ as in the proof of Lemma \ref{maxwav}, and where  $C_T=1+o_{P_{b_0}}(1)$ as $T \to \infty$ for fixed $u \in \mathbb R$. We also have $\|P_{V_J^{\otimes d}}[\Phi_{\lambda,k,j}/\mu_{0}]\|_{\mu_0} \to \|\Phi_{\lambda,k}/\mu_{0}\|_{\mu_0}=\|\Phi_{\lambda,k}\|_{1/\mu_0}$ as $J \to \infty$ since $P_{V_J}$ are $L^2$-projections. The same is true if $\Phi_{\lambda,k,j}$ is replaced by arbitrary finite linear combinations $\sum_j \sum_{\lambda \le \kappa, k} a_{\lambda,k,j} \Phi_{\lambda,k,j}$, $\kappa$ fixed, and thus by Proposition 29 in \cite{N17} we deduce joint weak convergence of the finite-dimensional distributions. In particular, for every fixed $\kappa \in \mathbb N$,
\begin{equation} \label{fidiho}
\beta_{V_\kappa^{\otimes d}}(\bar \Pi_\kappa^{\bar D_T}, \mathcal N_{b_0, \kappa}) \to^{P_{b_0}} 0 \quad \text{as }T \to \infty.
\end{equation}
Combining the above bounds, given $\epsilon'>0$ we can choose $\kappa=\kappa(\epsilon')$ large enough so that  by virtue of the bounds following (\ref{kB}) and (\ref{kC}), the terms $B,C$ in (\ref{blconv}) are each less than $\epsilon'/3$ (for $B$ on an event of $P_{b_0}$-probability as close to one as desired). Then applying (\ref{fidiho}) for this choice of $\kappa$, we can also make the term $A$ less than $\epsilon'/3$ for $T$ large enough and with $P_{b_0}$-probability as close to one as desired, completing the proof of Theorem \ref{bvm1} with $\bar \Pi_T$ replacing $\tilde \Pi_T$, that is, with centring equal to $\hat G_J$.  

That $\hat G_J$ can be replaced by the posterior mean in Theorem \ref{bvm1} is the last step: since the laws $\bar \Pi_T$ form a sequence of (conditionally on $X^T$) Gaussian distributions on $(B^\rho_{1\infty})^*$ that converge weakly (in probability), we also have convergence of moments of that sequence (in probability) in $(B^\rho_{1\infty})^*$, using Exercise 2.1.4 in \cite{ginenickl2016} and arguing as in the proof of Theorem 2.7 in \cite{MNP17}. Since $\mathcal N_{b_0}$ has Bochner-mean zero we deduce that 
\begin{equation} \label{asylin}
\sqrt T(E^{\Pi_T}[b|X^T] - \hat G_J)= o_{P_{b_0}}(1)\text { in } (B^\rho_{1\infty})^*.
\end{equation}
This concludes the proof of Theorem \ref{bvm1}. Theorem \ref{mapas} now follows from (\ref{asylin}) and asymptotic normality of the $\sqrt T(\hat G_J (\eta) - \langle b_0, \eta \rangle)$ variables in the space $(B^\rho_{1\infty})^*$, proved as follows: if we denote by $\nu_T$ the law of the latter variables, then arguing just as in (\ref{blconv}) we have
\begin{equation} \label{betaagain}
\beta_{(B^\rho_{1\infty})^*}(\nu_T, \mathcal N_{b_0}) \le \beta_{V^{\otimes d}_\kappa} (\nu_{T,\kappa}, \mathcal N_{b_0,\kappa}) +  \sum_{i=1}^2  E\|\tilde Z_i - P_{(\kappa)}(\tilde Z_i)\|_{(B^\rho_{1\infty})^*}
\end{equation}
where $\tilde Z_2=^\mathcal LZ_2$ from above, $\tilde Z_1$ has law $\nu_T$ and $P_{(\kappa)}(\tilde Z_i)$ refers to the projected processes as in \eqref{projection}. The first term on the right hand side converges to zero, for every fixed $\kappa$, by applying the martingale central limit theorem as in (\ref{mclt}) to $(1/\sqrt T)\int_0^T (\Phi_{\lambda,r, j}/\mu_0)(X_t). dW_t$, $\lambda \le \kappa$ fixed, and using (\ref{eq:Bernstein}) to show that the term $$\frac{1}{\sqrt T}\int_0^T [P_{V_J^{\otimes d}}[\Phi_{\lambda,r, j}/\mu_0] -\Phi_{l,r, j}/\mu_0](X_t).dW_t =o_{P_{b_0}}(1)$$ in view of $\|P_{V_J^{\otimes d}}[\Phi_{\lambda,r, j}/\mu_0] -\Phi_{\lambda,r, j}/\mu_0\|_\infty \to 0$ as $J \to \infty$ for fixed $\lambda \le \kappa$. The third term in (\ref{betaagain}) was bounded as $o(1)$ for $\kappa \to \infty$ in (\ref{kC}), and the second term also converges to zero as $\kappa \to \infty$ by the arguments below (\ref{kB}). Thus choosing $\kappa$ large enough but fixed, and letting $T \to \infty$, Theorem \ref{mapas} follows since $\beta_{(B^\rho_{1\infty})^*}$ metrises weak convergence.

\section{Bayesian semi-parametric techniques for diffusions}\label{auxo}

We show here how techniques developed in \cite{CN14, castillo2014, CR15} for Gaussian white noise and i.i.d.~density models extend to the multi-dimensional diffusion case. 

\subsection{Asymptotic expansion of the posterior Laplace transform}\label{aux1}

We start with the following basic `LAN expansion' for $\ell_T$ from (\ref{eq:likelihood}).

\begin{lemma}\label{lem:LAN_expansion}
Suppose $b_0 \in C^{(d/2+\kappa)\vee 1}(\T^d), h \in H^{d/2+\kappa}(\T^d), \kappa>0$. Then
\begin{align*}
\ell_T(b_0 + h/\sqrt{T}) - \ell_T(b_0) = W_T(h) - \frac{1}{2T}\int_0^T \|h(X_t)\|^2 dt,
\end{align*}
where, as $T \to \infty$ and under $P_{b_0}$, $$W_T(h) \equiv \frac{1}{\sqrt T}\int_0^T h(X_t).dW_t \rightarrow^d N(0,\|h\|_{\mu_0}^2),~\frac{1}{T}\int_0^T \|h(X_t)\|^2 dt \rightarrow^{P}  \|h\|_{\mu_0}^2.$$
\end{lemma}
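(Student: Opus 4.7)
The plan is to verify the three claims of the lemma in order: the algebraic decomposition of the log-likelihood difference, the law of large numbers for the quadratic term $T^{-1}\int_0^T\|h(X_t)\|^2 dt$, and lastly the central limit theorem for $W_T(h)$, with the CLT being a consequence of the LLN.

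For the algebraic identity, I will substitute $b = b_0 + h/\sqrt T$ directly into the Girsanov expression \eqref{eq:likelihood}, expand the quadratic drift $\|b_0+h/\sqrt T\|^2 = \|b_0\|^2 + (2/\sqrt T)\, b_0.h + (1/T)\|h\|^2$, and use that under $P_{b_0}$ the SDE \eqref{eq:SDE} gives $dX_t = b_0(X_t)dt + dW_t$, so that $\int_0^T (h/\sqrt T)(X_t).dX_t$ splits into a Lebesgue and an It\^o part. The cross terms involving $b_0.h$ then cancel, leaving exactly $W_T(h) - (2T)^{-1}\int_0^T\|h(X_t)\|^2 dt$.

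For the ergodic convergence $T^{-1}\int_0^T \|h(X_t)\|^2 dt \to^{P_{b_0}} \|h\|_{\mu_0}^2$, I will set $\varphi := \|h\|^2 - \|h\|_{\mu_0}^2$ and solve the Poisson equation $L_{b_0} u = \varphi$. Since $h \in H^{d/2+\kappa}$ Sobolev-embeds into $C(\T^d)$, the Runst--Sickel product bound in Lemma \ref{lem:runst} gives $\|h\|^2 \in H^{d/2+\kappa}$, and by construction $\varphi \in L^2_{\mu_0}(\T^d)$; Lemma \ref{regest} then yields a unique solution $u \in H^{d/2+\kappa+2} \hookrightarrow C^2(\T^d)$. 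Applying It\^o's formula to $u(X_t)$ gives $T^{-1}\int_0^T \varphi(X_t) dt = (u(X_T) - u(X_0))/T - T^{-1}\int_0^T \nabla u(X_t).dW_t$; the boundary term is $O(T^{-1})$ in $L^\infty$ since $u$ is bounded, and the stochastic integral has quadratic variation of order $\|\nabla u\|_\infty^2/T$, so Bernstein's inequality \eqref{eq:Bernstein} forces it to zero in probability. Equivalently, Lemma \ref{lem:maxim} applied to the singleton class $\mathcal F = \{\varphi\}$ yields the conclusion in one line.

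For the CLT, observe that $M_t := T^{-1/2}\int_0^t h(X_s).dW_s$ is a continuous square-integrable martingale with quadratic variation $[M]_T = T^{-1}\int_0^T \|h(X_t)\|^2 dt$, which by the previous step converges in $P_{b_0}$-probability to $\sigma^2 := \|h\|_{\mu_0}^2$. The Dambis--Dubins--Schwarz theorem represents $M_T = \beta_{[M]_T}$ for some Brownian motion $\beta$, and continuity of $\beta$ together with the convergence of $[M]_T$ gives $W_T(h) = M_T \to^d \beta_{\sigma^2} \sim N(0,\sigma^2)$. The main obstacle I foresee is the PDE input: verifying that the low-regularity assumption $h \in H^{d/2+\kappa}$ is just enough to place the Poisson solution $u$ in $C^2(\T^d)$ so that It\^o's formula is legitimate. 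This is precisely what forces the threshold $d/2+\kappa$ in the hypothesis, and rests entirely on the elliptic regularity encoded in Lemma \ref{regest}; once this is in hand, the rest of the argument is routine martingale calculus.
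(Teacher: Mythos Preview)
Your proposal is correct and follows essentially the same approach as the paper's proof: the algebraic expansion via Girsanov and the SDE, the Poisson-equation/It\^o route to the law of large numbers using Lemma~\ref{regest}, and a martingale limit theorem for $W_T(h)$. The only cosmetic difference is that the paper invokes the martingale central limit theorem directly (via convergence of the predictable quadratic variation), whereas you reach the same conclusion through the Dambis--Dubins--Schwarz representation; both are standard and equivalent here.
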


\begin{proof}
Using \eqref{eq:SDE} with $b=b_0$ and \eqref{eq:likelihood},
\begin{align*}
\ell_T(b_0 + h/\sqrt{T}) - \ell_T(b_0) & = \frac{1}{\sqrt{T}} \int_0^T h(X_t).dX_t - \frac{1}{\sqrt{T}} \int_0^T b_0(X_t).h(X_t)dt \\
& \quad - \frac{1}{2T} \int_0^T\| h(X_t)\|^2 dt \\
& = \frac{1}{\sqrt{T}}\int_0^T h(X_t).dW_t - \frac{1}{2T} \int_0^T\| h(X_t)\|^2 dt .
\end{align*}
Since $x\mapsto \|x\|^2$ is a smooth map, the function $f_h(x) = \|h(x)\|^2- \|h\|_{\mu_0}^2 \in L_{\mu_0}^2(\T^d) \cap H^{d/2+\kappa}(\T^d)\subset C(\T^d)$. In particular, $LL^{-1}[f_h]=f_h$ where $L^{-1}=L^{-1}_{b_0}$ is the inverse of the generator $L$  constructed in Lemma \ref{regest} below. Moreover, by that Lemma and the Sobolev embedding theorem, $L^{-1}[f_h]  \in H^{d/2+\kappa+2} \subset C^2$. By It\^o's formula (Theorem 39.3 in \cite{B11}),
\begin{align*}
\int_0^T f_h(X_t)dt &= \int_0^T LL^{-1}[f_h](X_t)dt \\
&= L^{-1}[f_h](X_T) - L^{-1}[f_h](X_0) - \int_0^T \nabla L^{-1}[f_h](X_t).dW_t.
\end{align*}
Since $L^{-1}[f_h] \in C^2$, the first term on the right-hand side is $O(1)$, while the second term satisfies
\begin{align*}
E_{b_0} \Big[ \int_0^T \nabla L^{-1}[f_h](X_t).dW_t \Big]^2 = E_{b_0} \int_0^T \|\nabla L^{-1}[f_h](X_t)\|^2 dt \lesssim T\|L^{-1}[f_h]\|_{C^1}^2
\end{align*}
so that $T^{-1}\int_0^T f_h(X_t)dt \rightarrow 0$ in $L^2(P_{b_0})$, and hence also in $P_{b_0}$-probability. Next, set $M_T^h  = \int_0^T h(X_t).dW_t$, so that $(M_T^h)_{T\geq 0}$ is a continuous local $L^2$-martingale with quadratic variation $[M^h]_T = \int_0^T \|  h(X_t)\|^2 dt$. By what precedes $T^{-1} [M^h]_T - \|h\|_{\mu_0}^2 = T^{-1}\int_0^T f_h(X_t)dt \rightarrow 0$ in $L^2(P_{b_0})$ as $T\rightarrow \infty$. Applying the martingale central limit theorem (p.338f. in \cite{KE86}),
\begin{align} \label{mclt}
T^{-1/2} M_T^h \rightarrow^d N ( 0, \|h\|_{\mu_0}^2 )
\end{align}
as $T\rightarrow \infty$, completing the proof.
\end{proof}

A key result for our proofs is the following expansion of the Laplace transform of the posterior distribution $\Pi^{\mathcal D_T}(\cdot|X^T)$ arising from a `localised' prior $\Pi^{\mathcal D_T}$ for the choices of $\mathcal D_T$ from Lemma \ref{maxwav}.

\begin{proposition}\label{prop:laplace_trans}
Suppose $b_0 \in C^s(\T^d) \cap H^s(\T^d)$, $s>\max(d/2,1)$, and consider the Gaussian prior $\Pi$ from \eqref{eq:Gauss_prior_wav} with $2^J \approx T^\frac{1}{2a+d}$ and $\sigma_l = 2^{-l(\alpha+d/2)}$ for $a>\max(d-1,1/2)$ and $0 \leq \alpha \leq a$. Let $\Gamma_T \subset V_J^{\otimes d}$ be a set of functions admitting envelopes as in (\ref{enve}) and let $D_T\subset V_J^{\otimes d}$ denote the set \eqref{D_T} for this choice of $\Gamma_T$ and arbitrary $M>0$. For $u\in \R$, $b\in V_J^{\otimes d}$ and fixed $\gamma \in \Gamma_T$, define the perturbations 
\begin{equation} \label{pertu}
b_u =b_u(T,\gamma)=b-\tfrac{u}{\sqrt{T}}\gamma \in V_J^{\otimes d}.
\end{equation}
For any measurable function $G:L^2(\T^d)\rightarrow\R$, write
\begin{align} \label{lambo}
E^{\Pi^{D_T}}[e^{u\sqrt{T}G(b)} |X^{T}] =e^{\Lambda_T(u)} \frac{\int_{D_T} e^{S_T(b) + \ell_T(b_u)} d\Pi(b)}{\int_{D_T} e^{\ell_T(b)} d\Pi(b)},
\end{align}
for some $\Lambda_T$ to be determined and where 
\begin{align*}
S_T(b) = u\sqrt{T} \left( G(b)-\langle b-b_0,\gamma\rangle_{\mu_0} \right).
\end{align*}

\noindent (i) If for some $\kappa>0$ (or $\kappa=0$ if $d=1$),
\begin{align*}
R_T := 2^{J[d+(d/2+\kappa-1)_+]} M_T \varepsilon_T |\Gamma_T|_2  \left( 1+ \sqrt{\log (1/(M_T\varepsilon_T))} + \sqrt{\log (1/|\Gamma_T|_2)}\right) 
\end{align*}
satisfies $R_T \to 0 $ as $T\rightarrow \infty$, then we can take
\begin{align*}
\Lambda_T(u) = \frac{u}{\sqrt{T}} \int_0^T \gamma(X_t).dW_t +  \frac{u^2}{2T}\int_0^T \|\gamma(X_t)\|^2 dt + ur_T,~u \in \mathbb R,
\end{align*}
in (\ref{lambo}) with $r_T= O_{P_{b_0}}(R_T ) =o_{P_{b_0}}(1)$ uniformly over $\gamma\in \Gamma_T$.

\noindent (ii) Furthermore,
\begin{equation*}\label{epbd}
E_{b_0} \sup_{\gamma\in\Gamma_T} \left| \frac{1}{T}\int_0^T \|\gamma(X_t)\|^2 dt -\|\gamma\|_{\mu_0}^2 \right|  \lesssim  \tilde{R}_T,
\end{equation*}
where
$$ \tilde{R}_T := T^{-1/2} 2^{J[d+(d/2+\kappa-1)_+]} |\Gamma_T|_2^2 \left( 1+\sqrt{\log (1/|\Gamma_T|_2)} \right)$$
for any $\kappa>0$ ($\kappa=0$ if $d=1$). In particular, if both $R_T,\tilde{R}_T \rightarrow 0$, then we can take
\begin{align*}
\Lambda_T(u) = \frac{u}{\sqrt{T}} \int_0^T \gamma(X_t).dW_t +  \frac{u^2}{2} \|\gamma\|_{\mu_0}^2 + u r_T + u^2 \tilde{r}_T, ~u \in \mathbb R,
\end{align*}
in (\ref{lambo}) with $r_T= O_{P_{b_0}}(R_T ) =o_{P_{b_0}}(1)$ and $\tilde{r}_T= O_{P_{b_0}}(\tilde{R}_T) =o_{P_{b_0}}(1)$ uniformly over $\gamma\in \Gamma_T$.


\noindent (iii) Parts (i) and (ii) remain true if $D_T$ is replaced by $\bar D_T$ from (\ref{barD_T}) and if $M_T$ is replaced by $\bar M_T$ in the definition of $R_T$.
\end{proposition}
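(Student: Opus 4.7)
The plan is to derive \eqref{lambo} as an exact algebraic rearrangement of the log-likelihood difference $\ell_T(b)-\ell_T(b_u)$ and then absorb all $b$-dependent error terms into a scalar remainder $u r_T$ via a uniform empirical process bound over $D_T\times\Gamma_T$. First, expanding $\|b(X_t)\|^2-\|b_u(X_t)\|^2 = 2(u/\sqrt T) b(X_t).\gamma(X_t)-(u^2/T)\|\gamma(X_t)\|^2$ in (\ref{eq:likelihood}) and substituting $dX_t=b_0(X_t)dt+dW_t$ under $P_{b_0}$ yields the exact identity
\begin{align*}
\ell_T(b)-\ell_T(b_u) &= -\tfrac{u}{\sqrt T}\!\int_0^T\! \gamma(X_t).(b-b_0)(X_t)\,dt + \tfrac{u^2}{2T}\!\int_0^T\!\|\gamma(X_t)\|^2\,dt \\
&\quad + \tfrac{u}{\sqrt T}\!\int_0^T\! \gamma(X_t).dW_t.
\end{align*}
Rewriting the first time integral as $-u\sqrt T\langle b-b_0,\gamma\rangle_{\mu_0} - u\G_T[\gamma.(b-b_0)]$, with the centred empirical process $\G_T[f]:=T^{-1/2}\int_0^T(f(X_t)-\int f\,d\mu_0)\,dt$, and substituting into the ratio $\int_{D_T}e^{u\sqrt T G(b)+\ell_T(b)}d\Pi/\int_{D_T}e^{\ell_T(b)}d\Pi$ produces \eqref{lambo} with the stated $\Lambda_T$, provided the $b$-dependent term $u\G_T[\gamma.(b-b_0)]$ is $O_{P_{b_0}}(uR_T)$ uniformly in $b\in D_T$, $\gamma\in\Gamma_T$; the resulting multiplicative factor $1+O_{P_{b_0}}(uR_T)$ is then pulled out of the ratio and written as $e^{u r_T}$.

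The technical heart of the proof is the uniform bound $\sup_{b\in D_T,\gamma\in\Gamma_T}|\G_T[\gamma.(b-b_0)]|=O_{P_{b_0}}(R_T)$, which I will obtain via Lemma \ref{lem:maxim} applied to the centred class $\mathcal{F}_T=\{\gamma.(b-b_0)-\langle\gamma,b-b_0\rangle_{\mu_0}:b\in D_T,\gamma\in\Gamma_T\}\subset L^2_{\mu_0}$. Writing $b-b_0=(b-P_{V_J}b_0)+(P_{V_J}b_0-b_0)$ absorbs the negligible approximation bias $\|P_{V_J}b_0-b_0\|_{C^p}=O(2^{-J(s-p)})$ into constants, so the second display of Lemma \ref{lem:metric} applied componentwise with $p=(d/2+\kappa-1)_+$ bounds the $d_L$-diameter by $D_{\mathcal{F}_T}\lesssim 2^{J(p+d/2)}|\Gamma_T|_2 M_T\eps_T$. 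For the entropy, $\mathcal{F}_T$ is parametrised by a product of $L^2$-balls in two copies of the $v_J$-dimensional space $V_J^{\otimes d}$; combining product covers with the multiplication inequality of Lemma \ref{lem:runst} gives $\log N(\mathcal{F}_T,d_L,\tau)\lesssim v_J\log(C D_{\mathcal{F}_T}/\tau)$, so the Dudley integral contributes an extra $\sqrt{v_J}=2^{Jd/2}$ factor. The expectation bound together with the subgaussian tail bound of Lemma \ref{lem:maxim} (taken at $x$ of order $\sqrt{\log(1/(M_T\eps_T))}+\sqrt{\log(1/|\Gamma_T|_2)}$ to force the exceedance probability to zero) combine to exactly the stated $R_T$, and the $\sup\|L^{-1}[f]\|_\infty$ term is matched by Lemma \ref{regest} with the Sobolev embedding.

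Part (ii) is a parallel application of Lemma \ref{lem:maxim} to the simpler class $\{\|\gamma\|^2-\|\gamma\|_{\mu_0}^2:\gamma\in\Gamma_T\}$: Lemma \ref{lem:metric} (second display, with $\rho=\gamma\in V_J$) furnishes a $d_L$-diameter of order $2^{J(p+d/2)}|\Gamma_T|_2^2$, and the Dudley integral over a single $v_J$-ball yields the extra factor $\sqrt{v_J}(1+\sqrt{\log(1/|\Gamma_T|_2)})$, producing $\tilde R_T$. Substituting the resulting uniform approximation of $T^{-1}\int_0^T\|\gamma(X_t)\|^2\,dt$ by $\|\gamma\|_{\mu_0}^2$ in the $u^2$-coefficient of the first step gives the refined $\Lambda_T(u)$ of (ii). Part (iii) is immediate because $\bar D_T$ differs from $D_T$ only by replacing the $L^2$-constraint with the stronger $\|b-b_0\|_\infty\leq\bar M_T\eps_T$, which still entails $\|b-b_0\|_{L^2}\leq\bar M_T\eps_T$, so the preceding arguments apply verbatim with $M_T\mapsto\bar M_T$. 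The main obstacle is the bookkeeping in the middle step: the diameter factor $2^{J(p+d/2)}$ from the $H^p$-multiplication bound must combine with the $\sqrt{v_J}=2^{Jd/2}$ loss from the two-parameter product entropy to produce precisely the claimed exponent $2^{J(d+(d/2+\kappa-1)_+)}$, while keeping the tail-level logarithmic factors sharp in each envelope $M_T\eps_T$ and $|\Gamma_T|_2$ separately is what makes the bound usable in the downstream applications of Lemma \ref{maxwav}.
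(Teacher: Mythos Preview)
Your proposal is correct and follows essentially the same route as the paper's own proof: the same algebraic rearrangement of $\ell_T(b)-\ell_T(b_u)$, the same empirical process class $\mathcal{F}_T=\{(b-b_0).\gamma-\langle b-b_0,\gamma\rangle_{\mu_0}\}$, the same splitting via $P_{V_J}b_0$, the same product-entropy argument over two $v_J$-balls, and the same parallel treatment of $\{\|\gamma\|^2-\|\gamma\|_{\mu_0}^2\}$ for part (ii).

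One small clarification: the logarithmic factors $\sqrt{\log(1/(M_T\eps_T))}+\sqrt{\log(1/|\Gamma_T|_2)}$ in $R_T$ arise already from the Dudley entropy integral $J_{\mathcal{F}_T}$ itself (via the elementary bound $\int_0^a\sqrt{\log(A/\tau)}\,d\tau\lesssim a\sqrt{\log(A/a)}$ applied with $a=D_{\mathcal{F}_T}$ and the two entropy constants $A$ coming from the product cover), not from choosing a particular $x$ in the tail bound. The paper uses only the expectation inequality of Lemma~\ref{lem:maxim}, which already delivers $E_{b_0}\sup|\G_T|\lesssim R_T$; the tail bound is not needed here. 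This is a bookkeeping point and does not affect the validity of your argument.
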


\begin{proof}
$(i)$ For $\gamma = (\gamma_1,\dots,\gamma_d) \in \Gamma_T$,
\begin{align*}
E^{\Pi^{D_T}}[e^{u\sqrt{T}G(b)} |X^{T}] & =  E^{\Pi^{D_T}}[e^{S_T(b) + u\sqrt{T} \langle b-b_0,\gamma\rangle_{\mu_0} } |X^{T}]  \\
& = Z_T^{-1} \int_{D_T} e^{S_T(b) + u\sqrt{T}\langle b-b_{0},\gamma \rangle_{\mu_0} + \ell_T(b_u) + \ell_T(b)-\ell_T(b_u)}  d\Pi(b),
\end{align*}
with $Z_T = \int_{D_T} e^{\ell_T(b)} d\Pi(b)$. Define the empirical process $$\G_T [h] = \sqrt{T} \left( \frac{1}{T} \int_0^T h(X_t)dt - \int_{\T^d} h d\mu_0\right),~~h\in L^2(\T^d).$$ Using the LAN expansion from Lemma \ref{lem:LAN_expansion},
\begin{align*}
\ell_T(b)-\ell_T(b_u) &= \frac{u}{\sqrt{T}} \int_0^T \gamma(X_t).dW_t  - \frac{u}{\sqrt{T}} \int_0^T [b(X_t)-b_{0}(X_t)].\gamma(X_t)dt \\
& \quad + \frac{u^2}{2T} \int_0^T \|\gamma(X_t)\|^2 dt\\
& =  \frac{u}{\sqrt{T}} \int_0^T \gamma(X_t).dW_t  -u\G_T[(b-b_{0}).\gamma] - u\sqrt{T} \langle b-b_{0}, \gamma \rangle_{\mu_0} \\
&\quad  + \frac{u^2}{2T} \int_0^T \|\gamma(X_t)\|^2 dt.
\end{align*}
The Laplace transform from the first equation of this proof therefore equals
\begin{align*}
e^{\frac{u}{\sqrt{T}} \int_0^T \gamma(X_t).dW_t  + \frac{u^2}{2T} \int_0^T \|\gamma(X_t)\|^2 dt} Z_T^{-1} \int_{D_T} e^{-u\G_T[(b-b_{0}).\gamma]} e^{S_T(b) + \ell_T(b_u)} d\Pi(b).
\end{align*}
We use Lemma \ref{lem:maxim} to control the empirical process term uniformly over $b\in D_T$, $\gamma \in \Gamma_T$. Set
\begin{align*}
\mF_{T} = \left\{f_{b,\gamma}:= (b-b_{0}).\gamma - \int_{\T^d} (b-b_{0}).\gamma d\mu_0:  b\in D_T , \gamma\in \Gamma_T \right\},
\end{align*}
which is a subset of $L_{\mu_0}^2(\T^d) \cap H^{d/2+\kappa}(\T^d)$ for $0<\kappa<s-d/2$ since $\Gamma_T \subset V_J^{\otimes d} \subset H^p$ for any $p\le S$. Suppose $d\geq 2$. Lemma \ref{lem:metric} with $p=d/2+\kappa-1$ gives that for any $0<\kappa < s-d/2+1$, $b,\bar{b}\in D_T$ and $\gamma,\bar{\gamma}\in \Gamma_T$,
\begin{align*}
&d_L(f_{b,\gamma},f_{\bar{b},\bar{\gamma}})  \lesssim \|f_{b,\gamma}-f_{\bar{b},\bar{\gamma}} \|_{H^{d/2+\kappa-1}}\\
& \leq  \sum_{j=1}^d \Big\|(b_j-\bar{b}_j)\gamma_j  +  (\bar{b}_j-b_{0,j})(\gamma_j-\bar{\gamma}_j) \\
&~~~~~~~~~~~~~~~ -  \langle b_j-\bar{b}_j,\gamma_j\rangle_{\mu_0}  -  \langle \bar{b}_j-b_{0,j},\gamma_j -\bar{\gamma}_j\rangle_{\mu_0}\Big\|_{H^{d/2+\kappa-1}} \\
& \lesssim \sum_{j=1}^d 2^{J(d+\kappa-1)} \|b_j-\bar{b}_j\|_{L^2} \|\gamma_j\|_{L^2}   \\
& \quad +\sum_{j=1}^d \|(\bar{b}_j-P_{V_J}b_{0,j})(\gamma_j-\bar{\gamma}_j)  - \langle \bar{b}_j-P_{V_J}b_{0,j},\gamma_j -\bar{\gamma}_j\rangle_{\mu_0}\|_{H^{d/2+\kappa-1}} \\
& \quad + \sum_{j=1}^d \|(P_{V_J}b_{0,j}-b_{0,j})(\gamma_j-\bar{\gamma}_j)  - \langle P_{V_J}b_{0,j}-b_{0,j},\gamma_j -\bar{\gamma}_j\rangle_{\mu_0}\|_{H^{d/2+\kappa-1}}.
\end{align*}
The first sum above is bounded by $C2^{J(d+\kappa-1)} |\Gamma_T|_2  \|b-\bar{b}\|_{L^2}$, while by Lemma \ref{lem:metric} the second sum is bounded by $C \sum_{j=1}^d 2^{J(d+\kappa-1)} M_T\varepsilon_T \|\gamma-\bar{\gamma}\|_{L^2}$. Using Lemma \ref{lem:runst}, that $b_0 \in C^s\cap H^s$ for $s>d/2$ and \eqref{eq:wav_bd_1}-\eqref{eq:wav_bd_2}, the third sum is bounded by
\begin{align*}
& \sum_{j=1}^d \|P_{V_J}b_{0,j}-b_{0,j}\|_{L^\infty} \|\gamma_j - \bar{\gamma}_j\|_{H^{d/2+\kappa-1}} +  \|P_{V_J}b_{0,j}-b_{0,j}\|_{H^{d/2+\kappa-1}}  \|\gamma_j - \bar{\gamma}_j\|_{L^\infty} \\
& \lesssim \sum_{j=1}^d  \left( 2^{-Js} 2^{J(d/2+\kappa-1)} \|\gamma_j - \bar{\gamma}_j\|_{L^2} + 2^{J(d+\kappa-1-s)}  \|\gamma_j - \bar{\gamma}_j\|_{L^2} \right)\\
& \lesssim 2^{J(d+\kappa-1)} M_T\varepsilon_T \|\gamma-\bar{\gamma}\|_{L^2},
\end{align*}
using again that $\Gamma_T \subset V_J^{\otimes d}$. Summarizing,
\begin{align*}
d_L(f_{b,\gamma},f_{\bar{b},\bar{\gamma}})  \lesssim 2^{J(d+\kappa-1)} ( |\Gamma_T|_2  \|b-\bar{b}\|_{L^2} +M_T\varepsilon_T \|\gamma-\bar{\gamma}\|_{L^2}).
\end{align*}
In particular, $\mF_{T}$ has $d_L$-diameter $D_{\mF_{T}} \lesssim 2^{J(d+\kappa-1)}M_T\varepsilon_T |\Gamma_T|_2 = o(R_T) = o(1)$. Since $D_T,\Gamma_T \subset (V_J^{\otimes d}, \|\cdot\|_{L^2})$ where $v_J=dim(V_J)=O(2^{Jd})$, applying Proposition 4.3.34 of \cite{ginenickl2016} yields
\begin{align*}
&N(\mF_{T,},d_L,\tau) \\
& \leq  N(D_T,c2^{J(d+\kappa-1)}|\Gamma_T|_2 \|\cdot\|_{L^2},\tau/2) N(\Gamma_T,c2^{J(d+\kappa-1)}M_T\varepsilon_T \|\cdot\|_{L^2},\tau/2) \\
& \leq ( C 2^{J(d+\kappa-1)}|\Gamma_T|_2/\tau)^{dv_J} (C2^{J(d+\kappa-1)}M_T\varepsilon_T /\tau )^{dv_J}
\end{align*}
for some $c,C>0$. Recall the inequality
\begin{align*}
\int_0^a \sqrt{\log (A/x)}dx \leq \frac{2\log A}{2\log A-1} a\sqrt{\log (A/a)} \leq 4a\sqrt{\log (A/a)}
\end{align*}
for any $A\geq 2$ and $0<a\leq 1$ (p. 190 of \cite{ginenickl2016}). Using the last two displays and that $D_{\mF_T} \rightarrow 0$, $\int_0^{D_{\mF_T}} \sqrt{\log 2N(\mF_{T},d_L,\tau)}d\tau$ is bounded by a multiple of
\begin{align*}
&v^{1/2}_J \int_0^{D_{\mF_T}} \Big[\sqrt{\frac{\log ([C2^{J(d+\kappa-1)} |\Gamma_T|_2]\vee 2 }{\tau}} + \sqrt{\frac{\log ([C2^{J(d+\kappa-1)} M_T\varepsilon_T]\vee 2}{\tau}} \Big] d\tau\\
& \lesssim 2^{Jd/2} D_{\mF_T}  \sqrt{\log ([C2^{J(d+\kappa-1)} |\Gamma_T|_2]\vee 2 /D_{\mF_T})}  \\
& ~~~~+ 2^{Jd/2} D_{\mF_T}  \sqrt{\log ([C2^{J(d+\kappa-1)} M_T\varepsilon_T]\vee 2 /D_{\mF_T})}.
\end{align*}
Taking $D_{\mF_{T}} \approx 2^{J(d+\kappa-1)}M_T\varepsilon_T |\Gamma_T|_2$ for $\kappa>0$ arbitrarily small, one can therefore bound the quantity $J(\mF_T,d_L,D_{\mF_T})$ in Lemma \ref{lem:maxim} by
$$ 2^{J(3d/2+\kappa-1)}M_T\varepsilon_T  |\Gamma_T|_2 (1+ \sqrt{\log (1/(M_T\varepsilon_T))} + \sqrt{\log (1/|\Gamma_T|_2)}) = R_T.$$
Using the Sobolev embedding theorem, Lemma \ref{regest}, Lemma \ref{lem:metric} and similar computations to the above, we see that
$$\sup_{f_{b,\gamma} \in \mF_T} \|L^{-1}[f_{b,\gamma}]\|_\infty \lesssim \sup_{f_{b,\gamma} \in \mF_T} \|f_{b,\gamma}\|_{H^{(d/2+\kappa-2)_+}} \lesssim 2^{J[d/2 + (d/2+\kappa-2)_+]}M_T\varepsilon_T |\Gamma_T|_2$$ is also $o(R_T)$. Substituting these bounds into Lemma \ref{lem:maxim} yields $$E_{b_0} \sup_{b\in D_T,\gamma\in\Gamma_T} |\G_T[(b-b_{0}).\gamma]| \lesssim R_T \rightarrow 0,$$ proving the first statement. The case $d=1$ is proved similarly, using instead the simpler bound $$d_L(f_{b,\gamma},f_{\bar{b},\bar{\gamma}}) \lesssim 2^{J/2}  |\Gamma_T|_2  \|b-\bar{b}\|_{L^2} +  2^{J/2} M_T\varepsilon_T \|\gamma-\bar{\gamma}\|_{L^2}.$$

$(ii)$ Since $x\mapsto \|x\|^2$ is a smooth map, the function $g_\gamma(x) = \|\gamma(x)\|^2 - \int_{\T^d} \|\gamma(y)\|^2 d\mu_0(y) \in L_{\mu_0}^2(\T^d)\cap H^{d/2+\kappa}$ for $\kappa >0$. Since $\gamma \in V_J^{\otimes d}$, Lemma \ref{lem:metric} with $p=(d/2+\kappa-1)_+$ gives that for any $\kappa>0$ small enough and $\gamma,\bar{\gamma}\in \Gamma_T$,
\begin{align*}
 d_L(g_\gamma,g_{\bar{\gamma}}) & \lesssim \|g_\gamma-g_{\bar{\gamma}}\|_{H^{(d/2+\kappa-1)_+}} \\
 & \lesssim \sum_{j=1}^d \|\gamma_j^2 -\bar{\gamma}_j^2- \int_{\T^d} (\gamma_j^2 - \bar{\gamma}_j^2) d\mu_0 \|_{H^{(d/2+\kappa-1)_+}} \\
& \lesssim \sum_{j=1}^d 2^{J[d/2 + (d/2+\kappa-1)_+]}  \|\gamma_j\ - \bar{\gamma}_j\|_{L^2} \|\gamma_j\ + \bar{\gamma}_j\|_{L^2} \\
& \lesssim  2^{J[d/2+(d/2+\kappa-1)_+]} |\Gamma_T|_2   \|\gamma - \bar{\gamma}\|_{L^2}.
\end{align*}
In particular, $\mathcal{G}_T=\{ g_\gamma: \gamma\in \Gamma_T\} \cup \{ 0\}$ has $d_L$-diameter $$D_{\mathcal{G}_T} \lesssim 2^{J[d/2+(d/2+\kappa-1)_+]} |\Gamma_T|_2^2.$$ Using the same arguments as above, one deduces
\begin{align*}
N(\mathcal{G}_T,d_L,\tau) &\leq N(\Gamma_T,2^{J[d/2+(d/2+\kappa-1)_+]}|\Gamma_T|_2 \|\cdot\|_2,\tau) \\
& \leq (C2^{J[d/2+(d/2+\kappa-1)_+]} |\Gamma_T|_2 /\tau)^{dv_J}
\end{align*}
and hence
$$J(\mathcal{G}_T,d_L,D_{\mathcal{G}_T}) \lesssim 2^{J[d+(d/2+\kappa-1)_+]} |\Gamma_T|_2^2 \left( 1+ \sqrt{\log (1/|\Gamma_T|_2)}\right) = \sqrt{T}\tilde{R}_T.$$
In exactly the same way, $$\sup_{g_\gamma \in \mathcal{G}_T} \|L^{-1}[g_\gamma]\|_\infty \lesssim 2^{J[d/2+(d/2+\kappa-2)_+]} \|\gamma\|_{L^2}^2 \leq 2^{J[d/2+(d/2+\kappa-2)_+]} |\Gamma_T|_2^2.$$ Applying Lemma \ref{lem:maxim} thus gives
\begin{align*}
& E_{b_0} \sup_{\gamma\in\Gamma_T} \left| \frac{1}{T}\int_0^T \|\gamma(X_t)\|^2 dt -\int_{\T^d}\|\gamma(x)\|^2 d\mu_0(x) \right| \\
& = \frac{1}{\sqrt{T}} E_{b_0} \sup_{g_\gamma \in \mathcal{G}_T} \left| \G_T(g_\gamma) \right| \lesssim \frac{1}{\sqrt{T}}2^{J[d+(d/2+\kappa-1)_+]} |\Gamma_T|_2^2 \left( 1 +  \sqrt{\log (1/|\Gamma_T|_2)} \right) 
\end{align*}
which equals $\tilde{R}_T.$ Finally, the proof of Part (iii) follows in the same way, using that $\|\cdot\|_{L^2}\le \|\cdot\|_\infty$ and replacing $M_T$ by $\bar M_T$.
\end{proof}

\subsection{Change of measure}\label{aux2}

In this section, let $\Pi=\Pi_T$ be the prior from (\ref{eq:Gauss_prior_wav}). Using the lower bound for the small-ball probability \eqref{eq:thm_small_ball} in the proof of Theorem \ref{thm:contraction}, the proof of the following lemma is similar to the one of Theorem 8.20 in \cite{GvdV17}, and hence omitted.

\begin{lemma}\label{lem:post_prob_vanish}
Suppose $b_0\in C^s$ for $s>0$. Then there exists a finite constant $C=C(b_0)>0$ such that if $B_T$ are measurable sets satisfying $\Pi_T(B_T) = o(e^{-CT\varepsilon_T^2})$ for $\varepsilon_T=T^{-\frac{a\wedge s}{2a+d}}(\log T)$, then $E_{b_0} \Pi_T(B_T|X^T) \rightarrow 0$.
\end{lemma}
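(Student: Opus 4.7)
The plan is to follow the standard Bayesian denominator/numerator argument in the style of Ghosal--van der Vaart (Lemma 8.10 and Theorem 8.20 in \cite{GvdV17}), adapted to the diffusion setting via the LAN expansion already available in Lemma \ref{lem:LAN_expansion} and the small-ball lower bound \eqref{eq:thm_small_ball} verified inside the proof of Theorem \ref{thm:contraction}. Writing
$$\Pi_T(B_T\,|\,X^T)=\frac{\int_{B_T} e^{\ell_T(b)-\ell_T(b_0)}\,d\Pi_T(b)}{\int e^{\ell_T(b)-\ell_T(b_0)}\,d\Pi_T(b)}\equiv\frac{N_T}{D_T},$$
Fubini and the Girsanov identity $E_{b_0}[dP_b^T/dP_{b_0}^T]=1$ give the trivial numerator bound $E_{b_0}N_T=\Pi_T(B_T)$. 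The real work is a lower bound for the denominator $D_T$ holding on an event of $P_{b_0}$-probability tending to one.

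For the denominator, I would restrict the prior integral to the small ball $S_T=\{b:\|b-b_0\|_{\mu_0}\le\varepsilon_T\}$, for which \eqref{eq:thm_small_ball} (proved in the course of establishing Theorem \ref{thm:contraction}) yields $\Pi_T(S_T)\ge e^{-C_3T\varepsilon_T^2}$. On $S_T$, Lemma \ref{lem:LAN_expansion} (applied to $h=\sqrt T(b-b_0)$, legitimately so since $b\in V_J^{\otimes d}\subset H^{d/2+\kappa}$) gives
$$\ell_T(b)-\ell_T(b_0)=\int_0^T (b-b_0)(X_t).dW_t-\tfrac{1}{2}\int_0^T\|b-b_0\|^2(X_t)\,dt.$$
The quadratic term is bounded by $\tfrac{T}{2}h_T^2(b,b_0)$, and the restricted isometry Lemma \ref{lem:bilinear} (valid since $b_0\in C^s$ with $s>\max(d/2,1)$ in the regime needed by the contraction theorem) gives $h_T^2(b,b_0)\le 2\|b-b_0\|_{\mu_0}^2\le 2\varepsilon_T^2$ uniformly on $S_T$ with probability tending to one. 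For the martingale term, I would apply Jensen's inequality to $\log D_T$ after the usual reduction (Lemma~8.1/8.10 of \cite{GvdV17}): conditional on the path, the term equals a mean--zero Gaussian in $b$ with variance proxy dominated by $T\varepsilon_T^2$, so Chebyshev on the prior measure gives, with $P_{b_0}$-probability tending to one,
$$D_T\ge \tfrac12\Pi_T(S_T)\,e^{-CT\varepsilon_T^2}\ge \tfrac12 e^{-(C_3+C_1')T\varepsilon_T^2}$$
for some absolute $C_1'$ that depends only on the restricted-isometry constants and hence ultimately only on $b_0$ through Proposition \ref{as:true_drift}.

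Combining the two bounds, let $A_T$ be the event on which the preceding lower bound for $D_T$ holds, so $P_{b_0}(A_T^c)\to 0$. Then
$$E_{b_0}\Pi_T(B_T\,|\,X^T)\le P_{b_0}(A_T^c)+E_{b_0}\bigl[N_T\mathbf{1}_{A_T}/D_T\bigr]\le P_{b_0}(A_T^c)+2e^{(C_3+C_1')T\varepsilon_T^2}\Pi_T(B_T).$$
Choosing $C:=C_3+C_1'+1$ (which depends only on $b_0$) and using the hypothesis $\Pi_T(B_T)=o(e^{-CT\varepsilon_T^2})$ makes the right-hand side converge to zero, which is the claim.

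The main obstacle is to justify the denominator bound rigorously in the diffusion setting: the standard Gaussian white noise argument uses that $\int (b-b_0)(X_t).dW_t$ has $b$-averaged variance $\int_0^T\|b-b_0\|^2(X_t)\,dt$, and one must show this is $\lesssim T\varepsilon_T^2$ uniformly over $b\in S_T$ with high $P_{b_0}$-probability. This is exactly what the restricted isometry of Lemma \ref{lem:bilinear} supplies on $V_J^{\otimes d}\supset S_T\cap\mathrm{supp}(\Pi_T)$, so the standard GvdV proof goes through verbatim once this ingredient is inserted, which is why the authors leave the details to the reader.
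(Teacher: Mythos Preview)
Your overall strategy (numerator by Fubini, denominator by a small-ball/Jensen/Chebyshev argument in the style of Lemma~8.10 and Theorem~8.20 of \cite{GvdV17}) is the same one the paper has in mind. But the way you control the denominator takes an unnecessary detour, and this detour is what forces you to assume $s>\max(d/2,1)$ rather than the $s>0$ actually stated.

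The point is that the small-ball input available from the proof of Theorem~\ref{thm:contraction} is not just \eqref{eq:thm_small_ball} in $\|\cdot\|_{\mu_0}$, but the stronger $\|\cdot\|_\infty$-statement (item (iii) there): $\Pi_T(\|b_j-b_{0,j}\|_\infty<4\varepsilon_T)\ge e^{-T\varepsilon_T^2}$ for each $j$. If you restrict the denominator to $S_T=\{b:\max_j\|b_j-b_{0,j}\|_\infty<4\varepsilon_T\}$, then on $S_T$ the quadratic variation is bounded \emph{deterministically}:
\[
\int_0^T\|b-b_0\|^2(X_t)\,dt\le T\sum_{j=1}^d\|b_j-b_{0,j}\|_\infty^2\le 16dT\varepsilon_T^2,
\]
and likewise $E_{b_0}\big[\int_0^T(b-b_0)(X_t).dW_t\big]^2\le 16dT\varepsilon_T^2$. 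The standard Jensen/Chebyshev step of Lemma~8.10 in \cite{GvdV17} (Chebyshev under $P_{b_0}$ applied to the $\nu$-averaged log-likelihood, where $\nu=\Pi_T(\cdot\cap S_T)/\Pi_T(S_T)$) then goes through without any appeal to the restricted isometry Lemma~\ref{lem:bilinear}. This is the adaptation the paper is pointing to, and it needs only $b_0\in C^s$, $s>0$, for the Gaussian small-ball estimate.

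Two smaller issues with your write-up: Lemma~\ref{lem:bilinear} is stated for $b,\bar b\in V_J^{\otimes d}$, and $b_0$ is not in $V_J^{\otimes d}$, so you would have to project first (as in the proof of Theorem~\ref{thm:contraction}); and the phrase ``conditional on the path, the term equals a mean-zero Gaussian in $b$'' is not how the GvdV argument runs---one applies Chebyshev under $P_{b_0}$ to the $\nu$-averaged martingale, using that its $P_{b_0}$-variance is controlled by the quadratic variation bound above, not any Gaussianity in $b$.
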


We now bound the ratio of Gaussian integrals from (\ref{lambo}) in Proposition \ref{prop:laplace_trans}.

\begin{lemma}\label{lem:change_of_meas}
(i) Suppose $b_0\in C^s(\T^d)\cap H^s(\T^d)$ for some $s>\max(d/2,1)$. Let $2^J \approx T^{\frac{1}{2a+d}}$ for $a>\max(d-1,1/2)$ and $\varepsilon_T=T^{-\frac{a\wedge s}{2a+d}}(\log T)$.  Let $D_T$ be as in (\ref{D_T}) for a choice of $\Gamma_T \subset V_J^{\otimes d}$ whose envelopes from (\ref{enve}) satisfy $|\Gamma_T|_2 = O(\sqrt{T}\varepsilon_T)$ and $\varepsilon_T \sigma_{\Gamma_T} \to 0$ as $T \to \infty$. Then for all $M>0$ large enough, $\Pi(D_T|X^T) = 1-o_{P_{b_0}}(1)$. Moreover for $b_u$ as in (\ref{pertu}) and all $u\in \R$,
\begin{align}\label{cammart}
\frac{\int_{D_T} e^{\ell_T(b_u)} d\Pi(b)}{\int_{D_T} e^{\ell_T(b)} d\Pi(b)} = 1+\zeta_T(u) \leq C_T e^{r_Tu^2},
\end{align}
where $\zeta_T(u)=o_{P_{b_0}}(1)$ for every fixed $u$, where both $C_T = O_{P_{b_0}}(1)$ and non-random $r_T = o(1)$ are independent of $u$, and all terms are uniform over $\gamma \in \Gamma_T$.

(ii) The conclusion of Part (i) remains true for $d \le 4$ and under the conditions of Theorem \ref{super} if $D_T$ is replaced by the set $\bar D_T$ from (\ref{barD_T}) with $\bar M_T = (\log T)^{\delta-1}, \delta>5/2$, and if in addition $|\Gamma_T|_{2}=O(1)$ as $T \to \infty$.
\end{lemma}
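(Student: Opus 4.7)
This lemma splits into a localisation claim, $\Pi(D_T|X^T)=1-o_{P_{b_0}}(1)$, and the change-of-measure identity (\ref{cammart}); the plan is to reduce both to a Cameron--Martin translation on the finite-dimensional Gaussian prior $\Pi$ supported on $V_J^{\otimes d}$.

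For the localisation in Part (i), the $L^2$-ball condition $\|b-b_0\|_{L^2}\le M_T\varepsilon_T$ is immediate from Theorem \ref{thm:contraction} and the norm equivalence of Proposition \ref{as:true_drift}. For the RKHS linear constraint, the key observation is that $\Pi(\cdot|X^T)$ is a finite-dimensional Gaussian on $V_J^{\otimes d}$ centred at $\hat b_T$ whose covariance is dominated (in the PSD order) by the prior covariance, since the posterior precision equals the prior precision $\mathrm{diag}(\sigma_l^{-2})$ plus the PSD data-information matrix $T\hat\Gamma$. Consequently $\langle b-\hat b_T,\gamma\rangle_\mathbb{H}$ is centred Gaussian under the posterior with variance $\le\|\gamma\|_\mathbb{H}^2\le\sigma_{\Gamma_T}^2$, and a Dudley/Borell--TIS chaining over $\Gamma_T\subset V_J^{\otimes d}$ (whose $\|\cdot\|_\mathbb{H}$-entropy is $O(2^{Jd}\log(\sigma_{\Gamma_T}/\tau))$) together with the scale $2^{Jd}\lesssim T\varepsilon_T^2$ gives a supremum of order $\sigma_{\Gamma_T}\sqrt{T\varepsilon_T^2}$. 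The centring term $|\langle\hat b_T,\gamma\rangle_\mathbb{H}|\le\|\hat b_T\|_\mathbb{H}\sigma_{\Gamma_T}$ is handled via the MAP-defining inequality $\tfrac{1}{2}\|\hat b_T\|_\mathbb{H}^2\le\tfrac{1}{2}\|P_{V_J^{\otimes d}}b_0\|_\mathbb{H}^2+\ell_T(\hat b_T)-\ell_T(P_{V_J^{\otimes d}}b_0)$, whose first term is $O(1)$ under $\alpha\le a\le s$ (which ensures $b_0\in\mathbb{H}$) and whose likelihood difference is $O_{P_{b_0}}(T\varepsilon_T^2)$ by LAN (Lemma \ref{lem:LAN_expansion}) together with the $L^2$-contraction; this yields $\|\hat b_T\|_\mathbb{H}=O_{P_{b_0}}(\sqrt T\varepsilon_T)$, and choosing $M$ large completes the localisation. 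This step is the main obstacle: the uniform-in-$\gamma$ control relies crucially on the finite-dimensional truncation (bounding metric entropy), on concavity of $\ell_T$ (for the PSD domination), and on the MAP characterisation of $\hat b_T$.

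For the identity (\ref{cammart}), substituting $\tilde b=b-u\gamma/\sqrt T$ in the finite-dimensional Gaussian density $d\Pi(b)\propto e^{-\|b\|_\mathbb{H}^2/2}db$ (valid since $\gamma\in V_J^{\otimes d}\subset\mathbb{H}$) and simplifying gives
$$\int_{D_T}e^{\ell_T(b_u)}d\Pi(b)=e^{-u^2\|\gamma\|_\mathbb{H}^2/(2T)}\int_{D_T-u\gamma/\sqrt T}e^{\ell_T(\tilde b)}e^{-(u/\sqrt T)\langle\tilde b,\gamma\rangle_\mathbb{H}}d\Pi(\tilde b),$$
and dividing by $\int_{D_T}e^{\ell_T(b)}d\Pi(b)=\Pi(D_T|X^T)\int e^{\ell_T(b)}d\Pi(b)$ yields
$$\mathrm{ratio}=e^{-u^2\|\gamma\|_\mathbb{H}^2/(2T)}\cdot\frac{E^{\Pi(\cdot|X^T)}[\mathbf{1}_{D_T-u\gamma/\sqrt T}(b)\,e^{-(u/\sqrt T)\langle b,\gamma\rangle_\mathbb{H}}]}{\Pi(D_T|X^T)}.$$
On the indicator event $b+u\gamma/\sqrt T\in D_T$, the triangle inequality in $\mathbb{H}$ gives $(|u|/\sqrt T)|\langle b,\gamma\rangle_\mathbb{H}|\le|u|M\varepsilon_T\sigma_{\Gamma_T}+u^2\sigma_{\Gamma_T}^2/T$. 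The quadratic contribution is $u^2\cdot o(1)$ since $\sigma_{\Gamma_T}^2/T=(\varepsilon_T\sigma_{\Gamma_T})^2/(T\varepsilon_T^2)=o(1)$ under the hypotheses $\varepsilon_T\sigma_{\Gamma_T}=o(1)$ and $T\varepsilon_T^2\to\infty$, while Young's inequality $|u|x\le\epsilon u^2+x^2/(4\epsilon)$ applied with $\epsilon=M\varepsilon_T\sigma_{\Gamma_T}/2$ converts the linear-in-$|u|$ term into $u^2\cdot o(1)+o(1)$. Combined with $e^{-u^2\|\gamma\|_\mathbb{H}^2/(2T)}\le 1$ and $\Pi(D_T|X^T)^{-1}=1+o_{P_{b_0}}(1)$, this yields $\mathrm{ratio}\le C_T e^{r_Tu^2}$ with $C_T=O_{P_{b_0}}(1)$ and $r_T=o(1)$, uniformly in $u\in\mathbb R$ and $\gamma\in\Gamma_T$. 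For fixed $u$ both exponent bounds vanish, and a symmetric application of Stage~1 to the $O(1/\sqrt T)$-translated set gives $\Pi(D_T-u\gamma/\sqrt T|X^T)-\Pi(D_T|X^T)=o_{P_{b_0}}(1)$, so $\zeta_T(u)=o_{P_{b_0}}(1)$.

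Part (ii) follows by the same argument upon replacing the $L^2$-localisation by the $L^\infty$-localisation from Theorem \ref{super} (valid when $d\le 4$); the additional envelope assumption $|\Gamma_T|_2=O(1)$ ensures $\|u\gamma/\sqrt T\|_\infty\lesssim|u|\cdot 2^{Jd/2}/\sqrt T=o(\bar M_T\varepsilon_T)$ for fixed $u$, so that the translated set $\bar D_T-u\gamma/\sqrt T$ retains posterior mass $1-o_{P_{b_0}}(1)$, and the rest of the argument goes through unchanged.
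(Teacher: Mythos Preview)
Your treatment of the ratio identity (\ref{cammart}) via the Cameron--Martin substitution, the bound on the exponent over the shifted set, and the verification that $\Pi(D_T-u\gamma/\sqrt T|X^T)=1-o_{P_{b_0}}(1)$ for fixed $u$, matches the paper's argument essentially line for line. The difference lies in how you handle the RKHS constraint in the localisation step: the paper bounds the \emph{prior} probability $\Pi(\sup_{\gamma\in\Gamma_T}|\langle b,\gamma\rangle_{\mathbb H}|>M\sqrt T\varepsilon_T\sigma_{\Gamma_T})$ directly---under $\Pi$ one has $\langle b,\gamma\rangle_{\mathbb H}\sim N(0,\|\gamma\|_{\mathbb H}^2)$ exactly, so Dudley plus Borell--Sudakov--Tsirelson give a clean $e^{-cT\varepsilon_T^2}$ bound with no data-dependent centring---and then transfers to the posterior via Lemma \ref{lem:post_prob_vanish}. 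You instead work with the posterior Gaussian and invoke PSD domination of the posterior covariance by the prior covariance, which forces you to control the centring $\langle\hat b_T,\gamma\rangle_{\mathbb H}$ separately.

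This alternative route has a real gap. First, $\|P_{V_J^{\otimes d}}b_0\|_{\mathbb H}^2=O(1)$ requires $\alpha<s-d/2$, not merely $\alpha\le a\le s$; only the weaker bound $O(T\varepsilon_T^2)$ holds under the lemma's hypotheses. More importantly, the assertion that $\ell_T(\hat b_T)-\ell_T(P_{V_J^{\otimes d}}b_0)=O_{P_{b_0}}(T\varepsilon_T^2)$ ``by LAN together with the $L^2$-contraction'' is not justified: since $\hat b_T$ is $X^T$-measurable, the term $\int_0^T(\hat b_T-P_{V_J^{\otimes d}}b_0)(X_t).dW_t$ arising from the LAN identity is an anticipating integral. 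Bounding it requires expanding in the wavelet basis, applying Cauchy--Schwarz, and controlling $\sum_{l\le J,r,j}|\int_0^T\Phi_{l,r}(X_t)dW_t^j|^2$ via the concentration tools of Section \ref{marto} together with the rate for $\hat b_T$ from Corollary \ref{MAP}. This is possible but is considerably more work than the paper's prior-side argument, which never needs to bound $\|\hat b_T\|_{\mathbb H}$ at all.
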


\begin{proof}
(i) The set of $b$'s satisfying the $L^2$-constraint in the definition \eqref{D_T} of $D_T$ has posterior probability tending to one by Theorem \ref{thm:contraction}. Recall that by definition of the RKHS, $\langle b,\gamma \rangle_\H \sim N(0,\|\gamma\|_\H^2)$ for $b\sim \Pi$ and $\gamma \in \H = V_J^{\otimes d}$. By Dudley's metric entropy inequality (Section 2.3 in \cite{ginenickl2016}) applied to the Gaussian process $(\langle b, \gamma \rangle_{\mathbb H}: \gamma \in \Gamma_T)$ indexed by bounded subsets of the finite-dimensional space $V_J^{\otimes d}$ (with covering numbers bounded in Proposition 4.3.34 in \cite{ginenickl2016}), we have
\begin{align}\label{eq:GP_expect}
E^{\Pi} \sup_{\gamma\in\Gamma_T} |\langle b,\gamma \rangle_{\H}|\lesssim 2^{Jd/2} \sigma_{\Gamma_T} \sqrt{\log(1/\sigma_{\Gamma_T})} \le M_0 \sqrt{T}\varepsilon_T \sigma_{\Gamma_T}
\end{align}
for some $M_0>0$, since we may always take $\sigma_{\Gamma_T} \geq 1$. By the Borell-Sudakov-Tsirelson inequality (Theorem 2.5.8 of \cite{ginenickl2016}), for $M>M_0$,
\begin{align*}
& \Pi  \big(  \sup_{\gamma \in \Gamma_T} |\langle b, \gamma \rangle_{\H}| > M \sqrt{T}\varepsilon_T\sigma_{\Gamma_T} \big) \\
&\quad \leq \Pi \big( \sup_{\gamma\in\Gamma_T} |\langle b,\gamma \rangle_{\H}| > E^{\Pi} \sup_{\gamma \in \Gamma_T} |\langle b,\gamma \rangle_{\H}| +  (M-M_0) \sqrt{T}\varepsilon_T\sigma_{\Gamma_T} \big) \\
& \quad \leq e^{-\tfrac{1}{2} (M-M_0)^2 T\varepsilon_T^2}.
\end{align*}
Taking $M>0$ large enough, the posterior probability of the set in the last display is then $o_{P_{b_0}}(1)$ by Lemma \ref{lem:post_prob_vanish}. In summary this establishes that $\Pi(D_T^c |X_T) = o_{P_{b_0}}(1)$.

We now establish (\ref{cammart}). Letting $\Pi_u$ denote the law of $b_u$ under the prior and applying the Cameron-Martin theorem (Theorem 2.6.13 of \cite{ginenickl2016}), the desired ratio equals
\begin{align}\label{ratio}
\frac{\int_{D_{T,u}} e^{\ell_T(g)} \frac{d\Pi_u}{d\Pi}(g) d\Pi(g) }{\int_{D_T} e^{\ell_T(g)} d\Pi(g)} = \frac{\int_{D_{T,u}} e^{\ell_T(g)} e^{-\frac{u}{\sqrt{T}} \langle \gamma,g\rangle_{\H} - \frac{u^2}{2T}\|\gamma\|_{\H}^2} d\Pi(g) }{\int_{D_T} e^{\ell_T(g)} d\Pi(g)},
\end{align}
where $D_{T,u} = \{ g=b_u: b\in D_T\}$. By the definition of $D_T$,
\begin{align*}
\sup_{g\in D_{T,u},\gamma\in \Gamma_T} \left| \frac{u}{\sqrt{T}} \langle \gamma,g\rangle_\H + \frac{u^2}{2T}\|\gamma\|_{\H}^2 \right| & \leq \frac{|u|}{\sqrt{T}} \sup_{b\in D_T,\gamma\in\Gamma_T} |\langle\gamma,b-\frac{u\gamma}{\sqrt T} \rangle_\H| + \frac{u^2\sigma_{\Gamma_T}^2}{2T} \\
& \leq |u| M\varepsilon_T \sigma_{\Gamma_T} + \frac{3u^2\sigma_{\Gamma_T}^2}{2T}.
\end{align*}
We thus upper bound \eqref{ratio} by
\begin{align}\label{ratio2}
e^{\tilde{r}_Tu^2 + \tilde{r}_T'|u|} \frac{\int_{D_{T,u}} e^{\ell_T(g)} d\Pi(g)}{\int_{D_T} e^{\ell_T(g)} d\Pi(g)} = e^{\tilde{r}_Tu^2 + \tilde{r}_T'|u|}  \frac{\Pi(D_{T,u}|X^T)}{\Pi(D_T|X^T)},
\end{align}
where $\tilde{r}_T,\tilde{r}_T'\rightarrow 0$ are non-random and uniform over $\gamma\in \Gamma_T$. Since $\alpha |u| \leq \alpha^2 u^2 +1$ for all $\alpha \geq 0$ and $u\in \R$, the exponential in the last display is bounded by $e^{r_T u^2 + 1}$ for all $u\in \R$, where $r_T = \tilde{r}_T + (\tilde{r}_T')^2 = 3\sigma_{\Gamma_T}^2 /(2T) + M^2 \varepsilon_T^2\sigma_{\Gamma_T}^2\rightarrow 0$. Since we have already shown that $\Pi(D_T|X^T) = 1-o_{P_{b_0}}(1)$ and the posterior probability $\Pi(D_{T,u}|X^T)$ is bounded by one, the inequality in (\ref{cammart}) follows.

Turning to the exact asymptotics for fixed $u\in \R$, the right hand side in \eqref{ratio2} equals $\Pi(D_{T,u}|X^T)(1+o_{P_{b_0}}(1))$, and \eqref{ratio} can be lower bounded by \eqref{ratio2} with $e^{\tilde{r}_Tu^2 + \tilde{r}_T'|u|} $ replaced by $e^{-\tilde{r}_Tu^2 - \tilde{r}_T'|u|}$. It consequently suffices to prove $\Pi(D_{T,u}|X^T) = 1-o_{P_{b_0}}(1)$. Now
\begin{align*}
\Pi(D_{T,u}^c|X^T) & \leq \Pi( g\in V_J^{\otimes d}: \|g+\tfrac{u}{\sqrt{T}}\gamma-b_{0}\|_{\mu_0} > M_T \varepsilon_T|X^T) \\
& \quad + \Pi \Big(  g\in V_J^{\otimes d}: \sup_{\gamma \in \Gamma_T} |\langle g+ \tfrac{u}{\sqrt{T}}\gamma, \gamma \rangle_{\H}| > M \sqrt{T}\varepsilon_T\sigma_{\Gamma_T} \big| X^T \Big).
\end{align*}
By Proposition \ref{as:true_drift}, $\|\tfrac{u}{\sqrt{T}} \gamma\|_{\mu_0}\lesssim \tfrac{|u|}{\sqrt{T}} |\Gamma_T|_2 = O(\varepsilon_T)= o(M_T\varepsilon_T)$, so that the first posterior probability tends to zero by Theorem \ref{thm:contraction}. Using \eqref{eq:GP_expect}, that $\sigma_{\Gamma_T}^2/\sqrt{T} = o(\sqrt{T}\varepsilon_T \sigma_{\Gamma_T})$ and the Borell-Sudakov-Tsirelson inequality (Theorem 2.5.8 of \cite{ginenickl2016}), the prior probability of the second event is bounded by
\begin{align*}
& \Pi \Big(g: \sup_{\gamma\in\Gamma_T} |\langle g,\gamma \rangle_{\H}| + \frac{|u|\sigma_{\Gamma_T}^2}{\sqrt{T}} > E^{\Pi} \sup_{\gamma \in \Gamma_T} |\langle b,\gamma \rangle_{\H}| +  (M-M_0) \sqrt{T}\varepsilon_T\sigma_{\Gamma_T} \Big) \\
& \leq e^{-\tfrac{1}{4} (M-M_0)^2 T\varepsilon_T^2}
\end{align*}
for $T$ large enough depending on $u$. For $M>0$ large enough, Lemma \ref{lem:post_prob_vanish} then yields that the posterior probability of this last set is $o_{P_{b_0}}(1)$, which shows $\Pi(D_{T,u}|X^T) = 1-o_{P_{b_0}}(1)$ as desired. 

Part (ii) is proved in the same way using Theorem \ref{super} (whose proof only relies on Part (i) of the present lemma) to ensure that $\Pi(\bar D_T|X^T) \to^{P_{b_0}} 1$ as $ T \to \infty$, and upon noting that $$\|\tfrac{u}{\sqrt{T}} \gamma\|_{\infty}\lesssim \tfrac{2^{Jd/2}|u|}{\sqrt{T}} |\Gamma_T|_2 = O(\varepsilon_T)= o(\bar M_T\varepsilon_T)$$ since $|\Gamma_T|_{2}=O(1)$ as $T \to \infty$.
\end{proof}

\subsection{An approximation lemma} \label{aux4}

\begin{lemma}\label{lem:semipara_bias}
Suppose $b_{0}\in C^s(\T^d)$ for some $s\geq1$ and let $\lambda \leq J$, $1 \leq j \leq d$ and $b \in V_J^{\otimes d}$. If $a_\lambda = 2^{\lambda d/2}2^{-Jd/2} (\log T)^{-\eta}$ for some $\eta\ge 0$, then
\begin{align*}
&a_\lambda  |\langle \mu_0(b_j-b_{0,j}), \Phi_{\lambda,k}/\mu_0 - P_{V_J} [\Phi_{\lambda,k}/\mu_0] \rangle_{L^2}| \\
&~~~~\leq  C(2^{-2J} \|b_j-b_{0,j}\|_{L^2} +  2^{-J(s+d/2+1)})(\log T)^{-\eta}.
\end{align*}
If instead $a_\lambda = 2^{\lambda d/2} 2^{-J(d-2)} (\log T)^{-\eta}$ for some $\eta\ge 0$, then
\begin{align*}
&a_\lambda  |\langle \mu_0(b_j-b_{0,j}), \Phi_{\lambda,k}/\mu_0 - P_{V_J} [\Phi_{\lambda,k}/\mu_0] \rangle_{L^2}| \\
&~~~~\leq C (2^{-Jd/2} \|b_j-b_{0,j}\|_{L^2} +  2^{-J(s+d-1)}) (\log T)^{-\eta}.
\end{align*}
Finally,
$$\big|\big \langle \mu_0(b_j-b_{0,j}), \frac{\Phi_{\lambda,k}}{\mu_0} - P_{V_J} \big[\frac{\Phi_{\lambda,k}}{\mu_0}\big] \big\rangle_{L^2}\big| \leq  C2^{-\lambda d/2} \big(2^{-2J} \|b_j-b_{0,j}\|_{\infty} +  2^{-J(s+1)} \big).$$
In all cases, the constant $C$ depends only on $b_0$, $\Phi$ and $d$.
\end{lemma}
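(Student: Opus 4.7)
Set $\psi_{\lambda,k}:=\Phi_{\lambda,k}/\mu_0 - P_{V_J}[\Phi_{\lambda,k}/\mu_0]$, which is $L^2$-orthogonal to $V_J$ by construction. The prefactor $2^{-\lambda d/2}$ shared by all three bounds will come from the $L^1$-size of a level-$\lambda$ wavelet: by Cauchy--Schwarz on its support, $\|\Phi_{\lambda,k}\|_{L^1} \lesssim |\mathrm{supp}\,\Phi_{\lambda,k}|^{1/2}\|\Phi_{\lambda,k}\|_{L^2}\lesssim 2^{-\lambda d/2}$. The remaining powers of $2^{-J}$ will each be produced by a one-derivative commutator gain for multiplication operators against $P_{V_J}$. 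Note that by elliptic regularity applied to (\ref{cpde}), $b_0\in C^s$ with $s\ge 1$ yields $\mu_0\in C^{s+1}$, so both $\mu_0$ and $1/\mu_0$ serve as sufficiently smooth multipliers.

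\paragraph{Main steps.}
Using self-adjointness of $P_{V_J}$ together with $\psi_{\lambda,k}\perp V_J$, I would first rewrite
\[
I := \langle\mu_0(b_j-b_{0,j}), \psi_{\lambda,k}\rangle = \langle(I-P_{V_J})[\mu_0(b_j-b_{0,j})], \psi_{\lambda,k}\rangle,
\]
placing both factors in the $V_J$-orthogonal complement, and then split $b_j-b_{0,j}=f_1+f_2$ with $f_1 := b_j - P_{V_J}b_{0,j}\in V_J$ and $f_2 := P_{V_J}b_{0,j}-b_{0,j}$ (so $\|f_2\|_\infty\lesssim 2^{-Js}\|b_{0,j}\|_{C^s}$ by Jackson's inequality), giving $I = I_1 + I_2$. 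For the $V_J$-piece: the identity $(I-P_{V_J})[\mu_0 f_1] = [\mu_0, P_{V_J}]f_1$ combined with the one-derivative commutator bound $\|[\mu_0, P_{V_J}]\|_{L^2\to L^2}\lesssim 2^{-J}\|\mu_0\|_{C^1}$ yields $\|(I-P_{V_J})[\mu_0 f_1]\|_{L^2}\lesssim 2^{-J}\|f_1\|_{L^2}$; applying the same estimate with $(1/\mu_0, \Phi_{\lambda,k})$ in place of $(\mu_0, f_1)$ gives $\|\psi_{\lambda,k}\|_{L^2}\lesssim 2^{-J}$. Cauchy--Schwarz in $L^2$ then delivers $|I_1|\lesssim 2^{-2J}\|f_1\|_{L^2}$, which already implies the first two bounds for $I_1$ via $\lambda\le J$. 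For the third bound I would switch to $L^\infty$--$L^1$ H\"older: $\|\psi_{\lambda,k}\|_{L^1}\lesssim 2^{-\lambda d/2}\cdot 2^{-J}$ is obtained by decomposing $1/\mu_0 = P_{V_J}[1/\mu_0] + (I-P_{V_J})[1/\mu_0]$, combining the $L^1$ commutator bound on the first piece with Jackson's estimate $\|(I-P_{V_J})[1/\mu_0]\|_\infty\lesssim 2^{-J(s+1)}$ on the second (each paired with $\|\Phi_{\lambda,k}\|_{L^1}\lesssim 2^{-\lambda d/2}$). Then $|I_1|\le\|(I-P_{V_J})[\mu_0 f_1]\|_\infty\|\psi_{\lambda,k}\|_{L^1}\lesssim 2^{-\lambda d/2}\cdot 2^{-2J}\|f_1\|_\infty$ via the $L^\infty$-analogue of the commutator bound, while $|I_2|\le\|\mu_0 f_2\|_\infty\|\psi_{\lambda,k}\|_{L^1}\lesssim 2^{-\lambda d/2}\cdot 2^{-J(s+1)}$ directly. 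Summing yields the third bound; the first two then follow by running the same decomposition through the $L^2$-based Cauchy--Schwarz estimates and matching powers through the prescribed $a_\lambda$-scalings (using Bernstein's inequality $\|v\|_\infty\lesssim 2^{Jd/2}\|v\|_{L^2}$ on $V_J$ to interconvert $L^2$- and $L^\infty$-control of $f_1$ where needed).

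\paragraph{Main obstacle.}
The central technical input is the one-derivative commutator gain $\|[\mu_0, P_{V_J}]u\|_{L^p}\lesssim 2^{-J}\|\mu_0\|_{C^1}\|u\|_{L^p}$ in the relevant norms: $p=2$ for the first two bounds and $p\in\{1,\infty\}$ for the third. The $L^2$ version is classical, reflecting that polynomial reproduction in $V_J$ up to degree $S-1$ cancels the leading Taylor term of $\mu_0$ on each wavelet support (i.e.\ the almost-diagonal action of multiplication operators in the wavelet basis); the $L^1$ and $L^\infty$ versions additionally require uniform $L^1$- and $L^\infty$-boundedness of the Daubechies projections $P_{V_J}$, which holds for the $S$-regular basis used in the paper. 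Once these commutator estimates are secured, the distinct bias exponents $s+d/2+1$ and $s+d-1$ appearing in the first two bounds emerge mechanically by rewriting the core estimate $|I|\lesssim 2^{-\lambda d/2}\bigl(2^{J(d/2-2)}\|f\|_{L^2} + 2^{-J(s+1)}\bigr)$ through the two choices of $a_\lambda$ prescribed in the lemma.
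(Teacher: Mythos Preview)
Your approach is correct and reaches the same conclusions, but it differs from the paper's in presentation and technical ingredients. The paper expands $\psi_{\lambda,k}$ in the wavelet basis via Parseval's identity, writing the inner product as $\sum_{l>J}\sum_r \langle \mu_0(b_j-P_{V_J}b_{0,j}),\Phi_{l,r}\rangle\langle \Phi_{\lambda,k}/\mu_0,\Phi_{l,r}\rangle$, and then extracts the $2^{-l}$ gains one level at a time using the Lipschitz trick $\mu_0(x)-\mu_0(x_{l,r})$ on the support of each $\Phi_{l,r}$ (display (\ref{holder}) for the first factor and (\ref{locali}) for the second). This is entirely elementary: only $\|\mu_0\|_{\mathrm{Lip}},\|1/\mu_0\|_{\mathrm{Lip}}<\infty$ from Proposition~\ref{as:true_drift} is used, together with support and $L^1$ bounds for wavelets. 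Your route packages exactly the same cancellations into the operator-level commutator bounds $\|[\mu_0,P_{V_J}]\|_{L^p\to L^p}\lesssim 2^{-J}\|\mu_0\|_{C^1}$ for $p\in\{1,2,\infty\}$; these unpack to the paper's estimates upon writing out the projection kernel, so the two arguments are equivalent in content. Your version is more conceptual and slightly shorter, at the price of invoking the $L^1$ and $L^\infty$ boundedness of $P_{V_J}$ (which holds for the $S$-regular Daubechies basis but is a result one must cite). Two small points: your appeal to $\mu_0\in C^{s+1}$ is more than necessary---only the Lipschitz bound on $\mu_0$ and $1/\mu_0$ is actually used in either argument, and the Jackson estimate $\|(I-P_{V_J})[1/\mu_0]\|_\infty\lesssim 2^{-J}$ from $C^1$ already suffices for your $L^1$ bound on $\psi_{\lambda,k}$; and the Bernstein inequality you mention is not actually needed, since the $L^2$ route handles $I_1$ for the first two bounds and the $L^\infty$--$L^1$ route handles both $I_1$ and $I_2$ for the third.
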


\begin{proof}
By the triangle inequality, the desired quantity is bounded by
\begin{align*}
& a_\lambda  |\langle \mu_0(b_j-P_{V_J} b_{0,j}), \Phi_{\lambda,k}/\mu_0 - P_{V_J} [\Phi_{\lambda,k}/\mu_0] \rangle_{L^2}| \\
& \quad  + a_\lambda  |\langle \mu_0(b_{0,j}-P_{V_J} b_{0,j}), \Phi_{\lambda,k}/\mu_0 - P_{V_J} [\Phi_{\lambda,k}/\mu_0] \rangle_{L^2}| =: (I) + (II).
\end{align*}
By Parseval's identity,
\begin{equation}\label{eq:bias_lem}
\begin{split}
(I) & = a_\lambda \left| \sum_{l >J} \sum_r  \langle \mu_0(b_j-P_{V_J}b_{0,j}), \Phi_{l,r} \rangle_{L^2} \langle \Phi_{\lambda,k}/\mu_0 ,\Phi_{l,r} \rangle_{L^2} \right| \\
& \quad \leq a_\lambda \sum_{l>J} \max_r |\langle \mu_0(b_j-P_{V_J}b_{0,j}), \Phi_{l,r} \rangle_{L^2}| \sum_r |\langle \Phi_{\lambda,k}/\mu_0 ,\Phi_{l,r} \rangle_{L^2}|.
\end{split}
\end{equation}
By Proposition \ref{as:true_drift} we know that $\mu_0$ has finite Lipschitz norm $\|\mu_0\|_\text{Lip}$. Let $x_{l,r}\in I_{l,r} := \supp (\Phi_{l,r})$ and note that $\text{diam}(I_{l,r}) = O(2^{-l})$ by construction of the wavelets. Using that $b_j - P_{V_J}b_{0,j} \in V_J$ is orthogonal to $\Phi_{l,r}$ for any $l>J$, $\|\Phi_{l,r}\|_{L^1}\lesssim 2^{-ld/2}$ and \eqref{eq:wav_bd_2},
\begin{align}\label{holder}
&|\langle \mu_0(b_j-P_{V_J} b_{0,j}), \Phi_{l,r} \rangle_{L^2}| \notag \\
&= \left| \int_{\T^d} (\mu_0(x)-\mu_0(x_{l,r})) (b_j(x)-P_{V_J}b_{0,j}(x)) \Phi_{l,r}(x)  dx  \right| \notag \\
& \leq \|\mu_0\|_\text{Lip} \text{diam}(I_{l,r})  \int_{\T^d} |b_j(x)-P_{V_J}b_{0,j}(x)| |\Phi_{l,r}(x)| dx \notag \\
& \leq C(b_0,\Phi) 2^{-l} \|b_j-P_{V_J}b_{0,j}\|_{\infty} \|\Phi_{l,r}\|_{L^1} \\ \notag
&\leq C(b_0,\Phi) 2^{Jd/2} \|b_j-P_{V_J}b_{0,j}\|_{L^2} 2^{-l(d/2+1)}. 
\end{align}
Moreover, using the following standard properties of wavelet bases, $$\sup_x \sum_r |\Phi_{l,r}(x)| \lesssim 2^{ld/2}, l\geq 0,~~ \langle \Phi_{\lambda,k},\Phi_{l,r}\rangle_{L^2} = 0,~~\lambda\leq J<l,$$ we deduce
\begin{align}\label{locali}
\sum_r |\langle \Phi_{\lambda,k}/\mu_0,\Phi_{l,r}\rangle_{L^2}| & =  \sum_r \left| \int_{\T^d} \left( \frac{1}{\mu_0(x)} - \frac{1}{\mu_0(x_{l,r})} \right) \Phi_{\lambda,k}(x) \Phi_{l,r}(x) dx \right| \notag \\
& \leq  \|1/\mu_0\|_\text{Lip} \text{diam}(I_{l,r}) \int_{\T^d} |\Phi_{\lambda,k}(x)| \sum_r |\Phi_{l,r}(x)| dx \notag \\
& \leq C(b_0,\Phi) 2^{l(d/2-1)} 2^{-\lambda d/2}.
\end{align}
Substituting \eqref{holder} and \eqref{locali} into \eqref{eq:bias_lem} yields
\begin{align*}
(I) & \lesssim a_\lambda 2^{Jd/2} 2^{-\lambda d/2} \|b_j-P_{V_J}b_{0,j}\|_{L^2} \sum_{l>J} 2^{-2l} \lesssim  2^{-2J} (\log T)^{-\eta} \|b_j-b_{0,j}\|_{L^2}
\end{align*}
as desired. Next expanding $(II)$ as in \eqref{eq:bias_lem} and using \eqref{locali},
\begin{align*}
(II) &  \leq C(b_0,\Phi) a_\lambda \sum_{l>J} \max_r |\langle \mu_0(b_{0,j}-P_{V_J}[b_{0,j}]), \Phi_{l,r} \rangle_{L^2}| 2^{l(d/2-1)} 2^{-\lambda d/2} \\
&\lesssim a_\lambda 2^{-Js} 2^{-\lambda d/2} \sum_{l>J} 2^{-l} \lesssim 2^{-J(s+1+d/2)} (\log T)^{-\eta},
\end{align*}
where we have used H\"older's inequality and $\|b_{0,j}-P_{V_J}[b_0,j]\|_\infty \lesssim 2^{-Js}$. The last two displays imply the first inequality in the lemma.  If instead $a_\lambda = 2^{\lambda d/2} 2^{-J(d-2)} (\log T)^{-\eta}$, then substituting this value into the final bounds for $(I)$ and $(II)$ gives the required result. The final inequality of the lemma is proved in the same way, but by using the penultimate $\|\cdot\|_\infty$ bound in (\ref{holder}) rather than the $\|\cdot\|_{L^2}$ bound in the last line. Taking $a_\lambda=1$ in the rest of the argument gives the result.
\end{proof}

\subsection{Proof of Lemma \ref{maxwav}}\label{aux3}

\begin{lemma}\label{lem:change_of_meas_L_inf}
Suppose $b_0\in C^s(\T^d)$ for some $s>d/2$. Then for $\Gamma_T$ as in (\ref{gama_main}) and the RKHS norm $\|\cdot\|_{\H}$ defined in \eqref{eq:RKHS_inner_prod} with $\sigma_l = 2^{-l(\alpha+d/2)}$ for $\alpha\geq 0$, we can take the envelopes from (\ref{enve}) as $$|\Gamma_T|_2 = C(d,\mu_0) \max_{\lambda \le J} a_\lambda, \quad \sigma_{\Gamma_T} = C(d,\mu_0,\Phi) 2^{J(\alpha+d/2)} \max_{\lambda \le J} a_\lambda.$$
\end{lemma}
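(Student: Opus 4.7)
The plan is to compute the two envelopes directly from the definitions, exploiting that the functions $\tilde{\Phi}_{\lambda,k,j}$ are projections of $\Phi_{\lambda,k}/\mu_0$ onto $V_J$ and are therefore well-controlled both in $L^2$ and in the RKHS norm by Parseval-type identities.

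For the $L^2$-envelope $|\Gamma_T|_2$, since $\tilde{\Phi}_{\lambda,k,j}$ has only one non-zero coordinate, $\|\tilde{\Phi}_{\lambda,k,j}\|_{L^2} = a_\lambda \|P_{V_J}[\Phi_{\lambda,k}/\mu_0]\|_{L^2}$. The orthogonal projection $P_{V_J}$ is a contraction on $L^2$, so this is at most $a_\lambda \|\Phi_{\lambda,k}/\mu_0\|_{L^2} \le a_\lambda \|1/\mu_0\|_\infty \|\Phi_{\lambda,k}\|_{L^2} = a_\lambda \|1/\mu_0\|_\infty$ using that the wavelets are $L^2$-orthonormal. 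By Proposition \ref{as:true_drift}, $\|1/\mu_0\|_\infty \le C(d,\mu_0)$, so taking the supremum over $\lambda,k,j$ yields $|\Gamma_T|_2 \le C(d,\mu_0) \max_{\lambda \le J} a_\lambda$.

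For the RKHS envelope $\sigma_{\Gamma_T}$, insert $g_1=g_2=\tilde{\Phi}_{\lambda,k,j}$ into the formula \eqref{eq:RKHS_inner_prod}. Only the $j$-th coordinate contributes, giving
\begin{equation*}
\|\tilde{\Phi}_{\lambda,k,j}\|_{\H}^2 = a_\lambda^2 \sum_{l\le J} \sum_r \sigma_l^{-2} \langle P_{V_J}[\Phi_{\lambda,k}/\mu_0], \Phi_{l,r}\rangle_{L^2}^2.
\end{equation*}
Since $P_{V_J}[\Phi_{\lambda,k}/\mu_0]\in V_J$, for $l\le J$ we have $\langle P_{V_J}[\Phi_{\lambda,k}/\mu_0], \Phi_{l,r}\rangle_{L^2} = \langle \Phi_{\lambda,k}/\mu_0, \Phi_{l,r}\rangle_{L^2}$. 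With $\sigma_l^{-2} = 2^{2l(\alpha+d/2)}$ monotone in $l$, we bound $\sigma_l^{-2} \le 2^{2J(\alpha+d/2)}$ on the sum and apply Parseval to get
\begin{equation*}
\|\tilde{\Phi}_{\lambda,k,j}\|_{\H}^2 \le a_\lambda^2\, 2^{2J(\alpha+d/2)} \|P_{V_J}[\Phi_{\lambda,k}/\mu_0]\|_{L^2}^2 \le a_\lambda^2\, 2^{2J(\alpha+d/2)} \|1/\mu_0\|_\infty^2,
\end{equation*}
again by contractivity of $P_{V_J}$ and orthonormality of the wavelets. Taking the maximum over $\lambda \le J$ produces the claimed form $\sigma_{\Gamma_T} = C(d,\mu_0,\Phi)\, 2^{J(\alpha+d/2)} \max_{\lambda\le J} a_\lambda$.

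No real obstacle is anticipated: the argument is essentially two one-line bounds, each using only that $P_{V_J}$ is an $L^2$-contraction, that $\|1/\mu_0\|_\infty$ is finite (Proposition \ref{as:true_drift}), and that $\sigma_l^{-2}$ is maximised at $l=J$ within the admissible range. The constants absorb only $d$, $\|1/\mu_0\|_\infty$, and at most a universal wavelet constant.
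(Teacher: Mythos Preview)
Your proof is correct. The $L^2$-envelope argument is identical to the paper's. For the RKHS envelope you take a genuinely more direct route: you simply bound $\sigma_l^{-2} \le 2^{2J(\alpha+d/2)}$ uniformly over $l \le J$ and then apply Parseval to collapse the sum to $\|P_{V_J}[\Phi_{\lambda,k}/\mu_0]\|_{L^2}^2$. The paper instead establishes the finer coefficient-wise estimate
\[
|\langle \Phi_{\lambda,k}/\mu_0, \Phi_{l,r}\rangle_{L^2}| \lesssim 2^{-\max(l,\lambda)}2^{-|l-\lambda|d/2}
\]
from the Lipschitz regularity of $1/\mu_0$ and wavelet localisation, counts the overlapping supports at each scale, and sums the resulting geometric series --- arriving at the same bound $a_\lambda^2 2^{(2\alpha+d)J}$. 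Your argument is shorter and fully sufficient for the stated envelope; the paper's finer estimate is of the same type as those used later in Lemma~\ref{lem:semipara_bias}, but is not needed here.
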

\begin{proof}
Using Proposition \ref{as:true_drift}, $$\|\tilde{\Phi}_{\lambda,k,j}\|_{L^2}^2 = \|P_{V_J}[a_\lambda \Phi_{\lambda,k}/\mu_0]\|_{L^2}^2 \leq a_\lambda^2 \|1/\mu_0\|_{\infty}^2 \|\Phi_{\lambda,k}\|_{L^2}^2 \lesssim a_\lambda^2.$$ To prove the second bound, note that Proposition \ref{as:true_drift} implies that $1/\mu_0$ has finite Lipschitz norm $\|1/\mu_0\|_\text{Lip}$ on $\T^d$. Let $x_{l,r}\in \supp (\Phi_{l,r})$ and note that $\text{diam}(\supp (\Phi_{l,r})) = O(2^{-l})$ by construction of the wavelets. Using the orthogonality of the wavelets and H\"older's inequality, for $(l,r) \neq (\lambda,k)$ with $\lambda \leq J$,
\begin{equation*}
\begin{split}\label{wav_lip}
|\langle P_{V_J}[\Phi_{\lambda,k}/\mu_0], \Phi_{l,r} \rangle_{L^2} | &=  \left| \int_{\T^d} \Phi_{\lambda,k}(x)\Phi_{l,r}(x) \left( \frac{1}{\mu_0(x)}-\frac{1}{\mu_0(x_{l,r})} \right) dx \right| \\
& \lesssim \|1/\mu_0\|_\text{Lip} 2^{-\max (l,\lambda)} \int_{\T^d} |\Phi_{\lambda,k}(x)||\Phi_{l,r}(x)| dx\\
& \lesssim 2^{-\max (l,\lambda)} 2^{-|l-\lambda|d/2},
\end{split}
\end{equation*}
while $|\langle P_{V_J}[\Phi_{\lambda,k}/\mu_0], \Phi_{\lambda,k} \rangle_{L^2}| \leq \|1/\mu_0\|_\infty \|\Phi_{\lambda,k}\|_{L^2}^2 \lesssim 1$. Note that for $l\leq \lambda$, there are a constant number of wavelets $\Phi_{l,r}$ intersecting $\supp (P_{V_J}[\Phi_{\lambda,k}/\mu_0])$, while for $l\geq \lambda$, there are $O(2^{(l-\lambda)d})$ such wavelets. Splitting the following sum into these two cases, while separately keeping track of the term $(l,r) = (\lambda,k)$, and using the above bounds gives that for $\lambda \leq J$,
\begin{align*}
\|\tilde{\Phi}_{\lambda,k,j}\|_{\H}^2 & = \sum_{l\leq J} \sum_r \sigma_l^{-2} |\langle P_{V_J}[a_\lambda \Phi_{\lambda,k}/\mu_0], \Phi_{l,r} \rangle_{L^2} |^2 \\
& \lesssim a_\lambda^2 \left(  \sum_{l=0}^{\lambda} \sigma_l^{-2} 2^{-2\lambda-(\lambda-l)d} + \sum_{l=\lambda+1}^J \sigma_l^{-2} 2^{(l-\lambda)d} 2^{-2l -(l-\lambda)d} + \sigma_\lambda^{-2} \right) \\
& \lesssim a_\lambda^2 \left( 2^{-(d+2)\lambda} \sum_{l=0}^\lambda 2^{2l(\alpha+d)} + \sum_{l=\lambda+1}^J 2^{(2\alpha+d-2)l} + 2^{(2\alpha+d)\lambda} \right) \\
& \lesssim a_\lambda^2 2^{(2\alpha+d)J} .
\end{align*}
This yields $\sigma_{\Gamma_T}^2 \lesssim a_\lambda^2 2^{(2\alpha+d)J}$.
\end{proof}

We now prove Lemma \ref{maxwav} from above. The first assertion in Part (i) follows from Lemmas \ref{lem:change_of_meas} and \ref{lem:change_of_meas_L_inf} since the envelopes satisfy $|\Gamma_T|_2 =O(a_J)=O((\log T)^{-\eta})$, $\sqrt T \varepsilon_T \to \infty$ and $\varepsilon_T \sigma_{\Gamma_T} =  o(1)$ for $0 \leq \alpha < a\wedge s-d/2$ and the specified choice $2^J \approx T^{1/(2a+d)}$. To prove the first two maximal inequalities, we start by verifying the conditions of Proposition \ref{prop:laplace_trans} for the case $d \le 4$. Using the envelopes from Lemma \ref{lem:change_of_meas_L_inf}, we can bound $R_T$ and $\tilde{R}_T$ in Proposition \ref{prop:laplace_trans} by
\begin{align*}
R_T \lesssim  M_T T^{\frac{d-a\wedge s+(d/2+\kappa-1)_+}{2a+d}} (\log T)^{3/2-\eta} \rightarrow 0
\end{align*}
for $a\wedge s>\max(3d/2-1,1)$, $0<\kappa <a\wedge s-3d/2+1$ ($\kappa=0$ if $d=1$) and $M_T \rightarrow \infty$ slowly enough, while
\begin{align}\label{eq:tilde(R)_T}
\tilde{R}_T \lesssim  T^{-\frac{1}{2} + \frac{d+(d/2+\kappa-1)_+}{2a+d}} (\log T)^{-2\eta} \sqrt{\log \log T} \rightarrow 0
\end{align}
for $a> \max(d-1,1/2)$ and $0<\kappa <s-d+1$ ($\kappa=0$ if $d=1$). We may thus apply Proposition \ref{prop:laplace_trans} to $G(b) = \langle b_j-b_{0,j},a_\lambda \Phi_{\lambda,k}\rangle_{L^2}$ with $\gamma=\tilde{\Phi}_{\lambda,k,j}\in V_J^{\otimes d}$, so that for $b_u = b-\tfrac{u}{\sqrt{T}}\tilde{\Phi}_{\lambda,k,j}$, $u\in \R$,
\begin{align*}
E^{\Pi^{D_T}}[e^{u\sqrt{T}\langle b_j-b_{0,j},a_\lambda \Phi_{\lambda,k}\rangle_{L^2}} |X^{T}] & = e^{\frac{u}{\sqrt{T}} \int_0^T \tilde{\Phi}_{\lambda,k,j} (X_t).dW_t +  \frac{u^2}{2}\|\tilde{\Phi}_{\lambda,k,j}\|_{\mu_0}^2 + ur_T + u^2\tilde{r}_T} \\
& \quad \times  \frac{\int_{D_T} e^{S_T(b) + \ell_T(b_u)} d\Pi(b)}{\int_{D_T} e^{\ell_T(b)} d\Pi(b)},
\end{align*}
where $r_T = O_{P_{b_0}}(R_T)$, $\tilde{r}_T = O_{P_{b_0}}(\tilde{R}_T)$ uniformly over $\lambda \leq J$ and $k,j$, and
\begin{align*}
S_T(b) & = u\sqrt{T}   \langle b_j-b_{0,j},a_\lambda \Phi_{\lambda,k}\rangle_{L^2} -u\sqrt{T}  \langle b_j-b_{0,j},a_\lambda P_{V_J} [\Phi_{\lambda,k}/\mu_0] \rangle_{\mu_0}  \\
& =  u\sqrt{T} a_\lambda  \langle \mu_0(b_j- b_{0,j}), \Phi_{\lambda,k}/\mu_0 - P_{V_J} [\Phi_{\lambda,k}/\mu_0] \rangle_{L^2}.
\end{align*}
Applying the first bound from Lemma \ref{lem:semipara_bias} with $d\leq 4$, $\eta>1$,
\begin{align} \label{biao}
\sup_{b\in D_T} |S_T(b)| \lesssim |u| \sqrt{T} (\log T)^{-\eta} (2^{-2J} M_T \varepsilon_T + 2^{-J(s+1+d/2)}) =|u| \times o(1),
\end{align}
for $M_T \rightarrow \infty$ slowly enough and $s \geq a-1$. Applying $\alpha |u| \leq \alpha^2 u^2 +1$ for all $\alpha \geq 0$ to \eqref{biao}, and using Lemmas \ref{lem:change_of_meas} and \ref{lem:change_of_meas_L_inf} gives for all $u\in \R$,
\begin{align*}
\frac{\int_{D_T} e^{S_T(b) + \ell_T(b_u)} d\Pi(b)}{\int_{D_T} e^{\ell_T(b)} d\Pi(b)} \leq C_T e^{c_T u^2},
\end{align*}
where $C_T = O_{P_{b_0}}(1)$ and non-random $c_T = o(1)$ are independent of $u$ and uniform over $\lambda \leq J$ and $k,j$.

Setting $Z_{\lambda,k,j}  = \langle b_j - b_{0,j},a_\lambda \Phi_{\lambda,k}\rangle_{L^2} - \tfrac{1}{T} \int_0^T \tilde{\Phi}_{\lambda,k,\cdot} (X_t).dW_t$ and using again that $\alpha |u| \leq \alpha^2 u^2 +1$ for all $\alpha \geq 0$, the Laplace transform satisfies the conditional subgaussian bound
\begin{align*}
E^{\Pi^{D_T}}[e^{u\sqrt{T}Z_{\lambda,k,j}} |X^{T}] \leq C_T' e^{\frac{u^2}{2}(\|\tilde{\Phi}_{\lambda,k,j}\|_{\mu_0}^2+c_T')}
\end{align*}
for sequences $C_T' =O_{P_{b_0}}(1)$ and $c_T' = 2(\tilde{r}_T + r_T^2 + c_T) = o_{P_{b_0}}(1)$, which are independent of $u$ and uniform over $\lambda \leq J$ and $k,j$. Since $\|\tilde{\Phi}_{\lambda,k,j}\|_{\mu_0}^2 = a_\lambda^2 \|P_{V_J}[\Phi_{\lambda,k}/\mu_0]\|_{\mu_0}^2 \leq a_\lambda^2 \|\mu_0\|_{\infty}  \|1/\mu_0\|_\infty^2 \leq C(b_0)$, standard subgaussian inequalities (Lemmas 2.3.2 and 2.3.4 of \cite{ginenickl2016}) give
\begin{align}\label{miracle}
&\sqrt{T} E^{\Pi^{D_T}} [\max_{\lambda \leq J,k,j} |Z_{\lambda,k,j}| |X^T]  \\
&\lesssim \sqrt{2 \log 2 \text{dim} (V_J^{\otimes d})} (C_T'+1) \max_{\lambda \leq J,k,j} (\|\tilde{\Phi}_{\lambda,k,j}\|_{\mu_0}^2 + c_T') =O_{P_{b_0}}( \sqrt{J}) \notag
\end{align}
since $\text{dim}(V_J^{\otimes d}) = O(d2^{Jd})$. Similarly, for $\lambda \le J$,
\begin{align}\label{nmiracle}
&\sqrt{T} E^{\Pi^{D_T}} [\max_{k,j} |Z_{\lambda,k,j}| |X^T] \\
& \lesssim \sqrt{2 \log 2 \text{dim} (V_\lambda^{\otimes d})} (C_T'+1) \max_{k,j} (\|\tilde{\Phi}_{\lambda,k,j}\|_{\mu_0}^2 + c_T') =O_{P_{b_0}}( \sqrt{\lambda}). \notag
\end{align}
We now deduce (\ref{full}) from (\ref{miracle}), the same arguments then also show that (\ref{nfull}) follows from (\ref{nmiracle}). Decompose
\begin{align*}
& E^{\Pi^{D_T}} \left[ \left. \max_{\lambda\leq J, k,j} \sqrt{T}|\langle b_j-b_{0,j},a_\lambda \Phi_{\lambda,k}\rangle_{L^2}| \right| X^T \right]  \\
& \quad  \leq E^{\Pi^{D_T}} \left[ \left. \max_{\lambda\leq J, k,j} \sqrt{T}|Z_{\lambda,k,j}| \right| X^T \right] +  \max_{\lambda\leq J, k,j} \left| \frac{1}{\sqrt{T}} \int_0^T \tilde{\Phi}_{\lambda,k,j} (X_t).dW_t \right|
\end{align*}
and we have shown the first term is $O_{P_{b_0}}(\sqrt{J})$. We now control the $P_{b_0}$-expectation of the second term by showing that $M_T^{\lambda,k,j} =  \int_0^T \tilde{\Phi}_{\lambda,k,j} (X_t).dW_t$ are subgaussian with uniform constants on a suitable event $A_T$. For $\epsilon>0$ fixed, set
\begin{align}\label{eq:event_A}
A_T = \left\{ \max_{\lambda,k,j} \left| \frac{1}{T}\int_0^T \|\tilde{\Phi}_{\lambda,k,j}(X_t)\|^2 dt -\|\tilde{\Phi}_{\lambda,k,j}\|_{\mu_0}^2 \right| \leq \epsilon \right\} .
\end{align}
Applying Markov's inequality, Proposition \ref{prop:laplace_trans}(ii) and \eqref{eq:tilde(R)_T}, $P_{b_0}(A_T^c) \lesssim \epsilon^{-1} \tilde{R}_T \rightarrow 0$. On $A_T$ we have $$T^{-1}[M^{\lambda,k,j}]_T = T^{-1} \int_0^T \|\tilde{\Phi}_{\lambda,k,j}(X_T)\|^2 dt \leq \|\tilde{\Phi}_{\lambda,k,j}\|_{\mu_0}^2 + \epsilon \leq C_0(b_0) + \epsilon,$$ so that applying Bernstein's inequality \eqref{eq:Bernstein}, for any $x >0$,
\begin{align*}
P_{b_0} (T^{-1/2} |M_T^{\lambda,k,j}| 1_{A_T} \geq x) &\leq P_{b_0} (|M_T^{\lambda,k,j}| \geq x\sqrt{T}, [M^{\lambda,k,j}]_T \leq (C_0+\epsilon)T ) \\
& \leq 2e^{-\frac{x^2}{2(C_0+\epsilon)}}.
\end{align*}
Consequently, $(T^{-1/2} M^{\lambda,k,j}1_{A_T}: \lambda,k)$ are subgaussian random variables with uniformly bounded constants, so that $E_{b_0} \max_{\lambda\leq J,k,j}T^{-1/2}|M_T^{\lambda,k,j}|1_{A_T} = O(\sqrt{J})$ by Lemma 2.3.4 of \cite{ginenickl2016}. 
When $d \geq 5$, one proceeds exactly as above with the only difference to the case $d\leq 4$ being that we use the second bound in Lemma \ref{lem:semipara_bias} with $a \leq s+d/2-1$ rather than the first bound, which is needed to ensure $\sup_{b\in D_T} |S_T(b)| = o(|u|)$.

For Part (ii), we can invoke Lemma \ref{lem:change_of_meas}(ii) to obtain $\Pi(\bar D_T|X^T) \to 1$ in $P_{b_0}$-probability. The maximal inequality then follows from the same proof as in (i), using Proposition \ref{prop:laplace_trans}(iii), that the $\|\cdot\|_\infty$-contraction rate implies the same rate in $L^2$-norm, and replacing the bias bound (\ref{biao}) by the third inequality of Lemma \ref{lem:semipara_bias} so that scaling by $a_\lambda$ is not necessary. The conditions $s>a-1+d/2$ and $d<4$ ensure that the terms $\sqrt T 2^{-J(s+1)}$ and $\sqrt T2^{-2J}\|b-b_0\|_\infty$ are both $o(1)$ and hence asymptotically negligible.

\section{Proofs for Section \ref{bvmsec}}\label{prfl}

The proofs follow from Theorems \ref{bvm1} and \ref{mapas} and a version of the `Delta'-method for weak convergence applied to the map $b \mapsto \mu_b$. We will represent $\mu_{b}-\mu_{b+h}$ by a linear transformation of the vector field $h$ plus a remainder term that will be seen to be quadratic in (suitable norms of) $h$. The identity (\ref{deltam}) below is the key to these proofs and can be derived from perturbation arguments for the PDE (\ref{cpde}) as follows: let $\mu_b$ and $\mu_{b+h}$ correspond to vector fields $b,b+h \in C^1(\T^d)$ (cf.~Proposition \ref{as:true_drift}). Then necessarily $L^*_b \mu_b = L^*_{b+h} \mu_{b+h}$ or in other words 
$$\frac{\Delta}{2} \mu_b - b . \nabla \mu_b - div(b) \mu_b = \frac{\Delta}{2} \mu_{b+h} - (b+h) . \nabla \mu_{b+h} - div(b+h) \mu_{b+h},$$
which is the same as
$$\frac{\Delta}{2} (\mu_b - \mu_{b+h}) - b . \nabla (\mu_b - \mu_{b+h}) - div(b)(\mu_b-\mu_{b+h}) = -h . \nabla \mu_{b+h} - div(h) \mu_{b+h}.$$
Thus $u=\mu_b - \mu_{b+h}$ solves the equation
\begin{equation}\label{pertu2}
L^*_b u = -h. \nabla \mu_{b+h} - div(h) \mu_{b+h}.
\end{equation}
Next denote by $v_h=v_{b,h}$ the unique periodic solution of the PDE
\begin{equation}\label{pertu3} L_b^*v_h = -h . \nabla \mu_b - div(h) \mu_b = -\sum_{j=1}^d \frac{\partial}{\partial x_j} (h_j \mu_b) \equiv f_h
\end{equation}
satisfying $\int v_h=0$. In view of the results in Section \ref{pdeeee} and since, with $dx^{(j)}=\prod_{i\neq j} dx_i$,
\begin{align*} 
 \int_{\T^d} f_h(x)dx &= \sum_{j=1}^d \int_0^1 \dots \int_0^1\frac{\partial}{\partial x_j} (h_j(x) \mu_{b}(x))dx \\
& = \sum_j \int_{\T^{d-1}} \Big[(h_j \mu_{b})(x_1, \dots, x_{j-1},1,x_{j+1}, \dots, x_d)  \\
&~~~~~~~~~~~~~~- (h_j \mu_{b})(x_1, \dots,x_{j-1}, 0, x_{j+1} \dots, x_d) \Big] dx^{(j)} =0, \notag
\end{align*}
such a solution exists and can be represented as $v_h = (L_b^*)^{-1}[f_h]$, a map that is linear in $h$. Now since $\int \mu_{b+h}-\int \mu_b = 1-1=0$, we can use (\ref{pertu2}) and (\ref{pertu3}) to see that the differences $w_{b,h} = \mu_b - \mu_{b+h} -v_{h}$ are the unique (periodic) integral-zero solutions of $$L_{b+h}^* w_{b,h} = L_b^*w_{b,h} - h . \nabla w_{b,h} - div(h) w_{b,h}  = \sum_{j=1}^d  \frac{\partial}{\partial x_j} [h_j v_h] \equiv \bar f_h,$$ where again $\int \bar f_h =0$ as in the penultimate display, so that we can write $w_{b,h}= (L_{b+h}^*)^{-1}[\bar f_h]$. Thus we have, for any $h \in C^1(\T^d)$, the decomposition
\begin{equation} \label{deltam}
\mu_b-\mu_{b+h} =  v_{b,h} + w_{b,h} = (L_b^*)^{-1}[f_h]+  (L_{b+h}^*)^{-1}[\bar f_h],
\end{equation}
which for sufficiently smooth $b,h$ (such that also $\mu_b, \mu_{b+h} \in C^r, r>2$, see after (\ref{chain})) holds classically (pointwise on $\T^d$).

\subsection{Proof of Theorem \ref{invabvm}}

It suffices to prove the theorem for $$\sqrt T (\mu_b - \mu_{\hat b_T})|X^T, \quad b \sim \Pi^{\bar D_T}(\cdot|X^T),$$ where $\Pi^{\bar D_T}(\cdot|X^T)$ was introduced at the beginning of the proof of Theorem \ref{bvm1} and $\bar D_T$ is given by \eqref{barD_T}. On the set $\bar D_T$ we have the estimate $$\|b\|_{B^1_{\infty \infty}} \lesssim 2^{J} \|b -b_0\|_\infty + \max_j\|b_{0,j} - P_{V_J}(b_{0,j})\|_{B^1_{\infty \infty}} = O(1)$$ as $T\to \infty$, and the same argument shows $\|\hat b_T\|_{B^1_{\infty \infty}}=O_{P_{b_0}}(1)$ by virtue of Corollary \ref{MAP}. Proposition \ref{as:true_drift} then further implies that $\|\mu_b\|_{Lip}, \|\mu_{\hat b_T}\|_{Lip}$ are also $O(1)$ and $O_{P_{b_0}}(1)$, respectively -- these bounds will be used repeatedly in the proof without further mention. Recall that $v_h = (L_b^*)^{-1}[f_h]$ and $w_{b,h} = (L_{b+h}^*)^{-1}[\bar f_h]$ for $f_h = -\sum_{j=1}^d \tfrac{\partial}{\partial x_j} (h_j \mu_b)$ and $\bar f_h = \sum_{j=1}^d  \tfrac{\partial}{\partial x_j} [h_j v_h]$. We will use the decomposition (\ref{deltam}) with $h=  \hat b_T -b$. First, for the `remainder' term, we can use (\ref{negtreg}) below and (\ref{mult}) to deduce that, uniformly in $\|g\|_{\mathbb B_r} \le 1$,
\begin{align} \label{rembd}
\Big | \int_{\T^d} w_{b,h} g \Big | &\le \|g\|_{L^2} \|(L_{b+h}^*)^{-1}[\bar f_h]\|_{L^2} \notag \\
& \lesssim \|\bar f_h\|_{H^{-2}}=\sup_{\|\phi\|_{H^2} \le 1}\Big|\sum_{j=1}^d \int_{\T^d} \phi \frac{\partial}{\partial x_j} [h_j v_h]  \notag \Big|  \\
&\le \sup_{\|\phi\|_{H^2} \le 1}\Big|\sum_{j=1}^d \int_{\T^d} h_j v_h \frac{\partial}{\partial x_j}\phi   \Big|  \lesssim \|h\|_{\infty} \|(L_b^*)^{-1}[f_h]\|_{L^2}   \\
&\lesssim \|h\|_{\infty} \|f_h\|_{H^{-2}} \lesssim \|h\|_\infty \|h \mu_b\|_{L^2}  \lesssim \|h\|_\infty^2  \notag
\end{align}
is $O_{P_{b_0}}((\|\hat b_T-b_0\|_{\infty} + \|b- b_0\|_\infty)^2) = o_{P_{b_0}}(1/\sqrt T))$ on $\bar D_T$ and by Corollary \ref{MAP}. For the `linear' term we may write, noting the dependence $f_h=f_{h,b}$ on $b$,
$$\int v_{b,h} g = \int (L_{b_0}^*)^{-1}[f_{h,b_0}] g + \int (L_{b_0}^*)^{-1}[f_{h,b}-f_{h,b_0}] g + \int [(L_{b}^*)^{-1}-(L_{b_0}^*)^{-1}][f_{h,b}] g$$ and we denote the right hand side as $A_0+A_1+A_2$. The last term $A_2$ is $o_{P_{b_0}}(1/\sqrt T)$ in $\mathbb B^*_r$ since $[(L_{b}^*)^{-1}-(L_{b_0}^*)^{-1}][f_{h,b}]$ can be written as $-(L_b^*)^{-1}[(b-b_0).\nabla \omega + div(b-b_0) \omega]$ for $\omega= (L_{b_0}^*)^{-1}[f_{h,b}]$ (arguing just as in (\ref{pertu2})), so that using (\ref{negtreg}) gives (as in (\ref{rembd})) the inequality 
\begin{equation} \label{a1}
\|[(L_{b}^*)^{-1}-(L_{b_0}^*)^{-1}][f_h]\|_{L^2} \lesssim \|b-b_0\|_\infty (\|\hat b_T - b_0\|_{\infty} + \|b-b_0\|_{\infty})=o_{P_{b_0}}(1/\sqrt T).
\end{equation}
Similarly the term $A_1$ can be bounded in $\mathbb B^*_r$ by $$\|(L_{b_0}^*)^{-1}[f_{h,b}-f_{h,b_0}]\|_{L^2} \lesssim \|f_{h,b}-f_{h,b_0}\|_{H^{-2}} \lesssim \|h\|_\infty \|b-b_0\|_{\infty} =o_{P_{b_0}}(1/\sqrt T).$$
Finally, for the term $A_0$, we show that the linear operator $$h \mapsto v_{b_0,h} = (L_{b_0}^*)^{-1}[f_{h,b_0}]$$ is Lipschitz on $C^1(\T^d)$ for the norms $(B^{r+1}_{1\infty})^*$ and $\mathbb B_r^*$ for any $d/2-1<r <1$. Using that $v_{b_0,h} \in L^2_0(\T^d)$ and writing $\bar g = g-\int g d\mu_0$,
\begin{align} \label{chain}
&\|v_{b_0,h}\|_{\mathbb B_r^*} \notag \\
&= \sup_{\|g\|_{L^2}+ \|g\|_{B^{r}_{1\infty}} \le 1} \left|\int_{\T^d} g v_{b_0,h} \right| =\sup_{\|g\|_{L^2}+ \|g\|_{B^{r}_{1\infty}} \le 1} \left|\int_{\T^d} L_{b_0}L_{b_0}^{-1}[\bar g] v_{b_0,h} \right| \notag \\
&= \sup_{\|g\|_{L^2}+\|g\|_{B^{r}_{1 \infty}} \le 1} \left| \int_{\T^d} L_{b_0}^*v_{b_0,h} L_{b_0}^{-1}[\bar g] \right| \notag \\
&=\sup_{\|g\|_{L^2}+\|g\|_{B^{r}_{1 \infty}} \le 1} \left|\sum_{j=1}^d \int_{\T^d} h_j \big(\frac{\partial}{\partial x_j}  L_{b_0}^{-1}[\bar g]\big) \mu_{0}\right| \notag \\
& =\sup_{\|g\|_{L^2}+\|g\|_{B^{r}_{1 \infty}} \le 1}  \left| \langle h, \mu_0 \nabla L_{b_0}^{-1}[\bar g] \rangle_{L^2} \right|  \\
&\lesssim \|\mu_0\|_{C^{r+1}} \sup_{\|g\|_{L^2}+\|g\|_{B^{r}_{1 \infty}} \le 1}\|\nabla L_{b_0}^{-1}[\bar g]\|_{B^{r+1}_{1 \infty}} \sup_{\tilde g:\|\tilde g\|_{B^{r+1}_{1 \infty}} \le 1}  \left| \langle h, \tilde g\rangle_{L^2}\right|  \lesssim \|h\|_{(B^{r+1}_{1\infty})^*}, \notag
\end{align}
where we have used (\ref{mult}), (\ref{besovmap}) below and that $\nabla$ maps ${B^{r+2}_{1\infty}}$ continuously into $B^{r+1, \otimes d}_{1\infty}$. We also used that $\mu_0 \in C^{t}(\T^d), t \ge 1,$ whenever $b_0 \in C^s \cap H^s, s>t+d/2-1$. Indeed, $\|\mu_0\|_{H^1} \lesssim \|\mu_0\|_{Lip} \le C(d,\|b_0\|_\infty)$ (by Proposition \ref{as:true_drift} and Rademacher's theorem) allows an iterated application of the inequality (\ref{initiostar}) below with $u=\mu_0$ to bound $\|\mu_0\|_{H^{s+1}}$ by a constant $C(d, s, \|b_0\|_{C^s})$, which in turn bounds $\|\mu_0\|_{C^t}$ by the Sobolev embedding theorem. 

Summarizing, with $h=b-\hat{b}_T$ we have proved uniformly in $\|g\|_{\mathbb{B}_r} \leq 1$,
$$\sqrt{T} \int_{\T^d} (\mu_b - \mu_{\hat{b}_T}) g = \int_{\T^d} v_{b_0,\sqrt{T}(b-\hat{b}_T)} g + o_{P_{b_0}}(1)$$
and that the linear operator $h\mapsto v_{b_0,h}$ is continuous from $(C^1(\T^d),\|\cdot\|_{(B_{1\infty}^{r+1})^*})$ to $\mathbb{B}_r^*.$ Theorem \ref{invabvm} now follows from Theorem \ref{bvm1} with $r+1=\rho$ and the continuous mapping theorem for weak convergence applied to $\sqrt{T}(b-\hat b_T)$. We note that the calculation leading to (\ref{chain}) shows that the covariance of the limiting Gaussian process is the one of the Gaussian process $g \mapsto \mathbb W_0(\mu_0 \nabla L_{b_0}^{-1}[\bar g])$, $\mathbb W_0 \sim \mathcal N_{b_0}$, of the required form. In particular, $\mathcal N_{\mu_0}$ exists as a tight Gaussian probability measure in $\mathbb B_r^*$ as the image of $\mathcal N_{b_0}$ under the continuous map $v_{b_0, \cdot}$. The limit of the MAP-estimate follows from similar (in fact simpler) arguments and Theorem \ref{mapas}, and is left to the reader.

\subsection{Proof of Theorem \ref{invaclt}}

We finally prove Theorem \ref{invaclt} and explain the necessary modifications to the arguments from the proof of Theorem \ref{invabvm}. As in the proof of Theorem \ref{invabvm}, one shows that $\hat b_T, b$ are (in the former case, stochastically) bounded in $B^1_{\infty \infty}$ on the set $\bar D_T$, and so are then $\mu_{\hat b_T}, \mu_b$ by Proposition \ref{as:true_drift}. Using the Sobolev-embedding $H^1(\T) \subset C(\T)$ and then repeatedly Lemma \ref{regestc}, (\ref{negtreg}) and the basic interpolation inequality $\|g\|_{H^1} \lesssim \|g\|_{H^2}^{1/2}\|g\|_{L^2}^{1/2}$, the second term in the decomposition (\ref{deltam}) can be bounded by 
$$\|(L_{b+h}^*)^{-1}[\bar f_h]\|_{H^1} \lesssim \|\bar f_h\|_{L^2}^{1/2} \|\bar f_h\|_{H^{-2}}^{1/2} \lesssim \|h\|_{H^1} \|h\|_{L^2},$$ which for $a>3/2$ and $h=b-\hat b_T=b-b_0 -(\hat b_T - b_0), b \sim \Pi^{\bar D_T}(\cdot|X^T),$ is of order $\|h\|_{H^1} \|h\|_{L^2} =  o_P(1/\sqrt T)$ since $\|h\|_{H^1} \lesssim 2^J \|h\|_{L^2}$ for $h \in V_J$. The linear term in (\ref{deltam}) can be decomposed as
$$(L_{b_0}^*)^{-1}[f_h](x) -[(L_{b_0}^*)^{-1}-(L_{b}^*)^{-1}][f_h](x).$$ Then arguing as before (\ref{a1}) and using the Sobolev embedding $H^1 \subset C(\T)$ as well as Lemma \ref{regestc}, the second term is bounded, for $a>3/2$, by
$$\|[(L_{b}^*)^{-1}-(L_{b_0}^*)^{-1}][f_h]\|_{\infty} \lesssim  \|b-b_0\|_\infty (\|\hat b_T - b_0\|_{H^1} + \|b-b_0\|_{H^1}) =o_{P_{b_0}}(1/\sqrt T).$$
Similarly, noting the dependence $f_h=f_{h,b}$ on $b$, the term $(L_{b_0}^*)^{-1}[f_{h,b}] - (L_{b_0}^*)^{-1}[f_{h,b_0}]$ can be shown to be $o_{P_{b_0}}(1/\sqrt T)$ in $C(\T)$. 

We next establish continuity of the linear operator $$h \mapsto v_{b_0, h} = (L_{b_0}^*)^{-1}[f_{h,b_0}]$$ on $C^1(\T)$ for the norms of $(B^1_{1\infty}(\T))^*$ and $C(\T)$, so that the theorem follows from Theorem \ref{bvm1} and the continuous mapping theorem for weak convergence, just as in the proof of Theorem \ref{invabvm}. We use a dual representation for the weighted wavelet sequence norms characterising Besov spaces -- more precisely, that the classical identities $(c_0)^*=\ell_1, (\ell_1)^*=\ell_\infty$, where $c_0=\{(a_k): \lim_{k \to \infty}a_k=0\}$ is equipped with the supremum-norm on sequences, imply $$\|g\|_{B^0_{\infty 1}} \lesssim \sup_{\phi \in C(\T):\|\phi\|_{B^0_{1\infty}}\le 1} |\langle g, \phi\rangle_{L^2}|, \quad g \in C(\T).$$ Moreover (\ref{mult}) and (\ref{besovmap}) imply
$$\sup_{\|\phi\|_{B^0_{1\infty}}\le 1} \| \mu_0 \frac{d}{dy} L_{b_0}^{-1}[\bar \phi]\|_{B^1_{1\infty}} \lesssim \|\mu_0\|_{Lip} \sup_{\|\phi\|_{B^0_{1\infty}}\le 1} \| L_{b_0}^{-1}[\bar \phi]\|_{B^2_{1\infty}} <\infty,$$ which will be used in the following estimate. For $\bar \phi = \phi- \int \phi d\mu_{b_0}$, and since $v_{b_0, h} \in L^2_0(\T) \cap H^2 \subset C(\T)$ in view of Lemma \ref{regestc} below,
\begin{align*}
\|v_{b_0,h}\|_\infty &\lesssim \|v_{b_0,h}\|_{B^0_{\infty 1}} \lesssim \sup_{\phi \in C(\T): \|\phi\|_{B^0_{1\infty}}\le 1} \left|\int v_{b_0,h} \phi \right| \\
&= \sup_{\phi \in C(\T):\|\phi\|_{B^0_{1\infty}}\le 1} \left|\int (L_{b_0}^*)^{-1}[f_{h,b_0}] L_{b_0}L_{b_0}^{-1}[\bar \phi] \right| \\
&= \sup_{\phi \in C(\T):\|\phi\|_{B^0_{1\infty}}\le 1} \left|\int f_{h, b_0} L_{b_0}^{-1}[\bar \phi] \right| \\
& \le \sup_{\|\phi\|_{B^0_{1\infty}}\le 1} \| \mu_0 \frac{d}{dy} L_{b_0}^{-1}[\bar \phi]\|_{B^1_{1\infty}} \|h\|_{(B^1_{1\infty})^*} \lesssim \|h\|_{(B^1_{1\infty})^*}.
\end{align*}
The covariance of the limiting Gaussian process is obtained as follows: since $G_{b_0}(x,y)$ is the periodic Green kernel of $L_{b_0}^{-1}$, the Green kernel of $(L_{b_0}^{*})^{-1}$ is $G_{b_0}(y,x)$, and thus by the definitions and integration by parts,
$$(L_{b_0}^*)^{-1}[f_{h,b_0}] = -\int_\T G_{b_0}(y,\cdot) \frac{d}{dy}[h \mu_0](y)dy = \int_\T \frac{d}{dy}G_{b_0}(y, \cdot)  h(y) \mu_0(y)dy.$$ Inserting for $h$ the limit $\mathbb W_0 \sim \mathcal N_{b_0}$ of $\sqrt T(b - \hat b_T)$ gives the desired form of the limiting covariance. Finally, the limit distribution of the MAP estimate follows from the same (in fact simpler) arguments and Theorem \ref{mapas}.

\section{Appendix: Some basic facts on the elliptic PDEs involved}\label{pdeeee}

We record here some basic facts about elliptic PDEs and refer to, e.g., Chapter II.3 in \cite{BJS64} as a reference for standard background material in the periodic setting considered here. The generator $L=L_b$ of the diffusion process given in (\ref{eq:generator}) is a strongly elliptic second order partial differential operator. We will  suppress the dependence on $b$ in most of what follows; all that is required is that $b$ is `smooth enough', and $b \in V_J^{\otimes d}$ for a $S$-regular wavelet basis with $S$ large enough will be sufficient throughout. The maximum principle for elliptic operators (see \cite{GT98, BJS64}) implies that any (strong and then also weak) periodic solution of the Laplace equation
\begin{equation}\label{lapl}
Lu = 0 \text{ on } \T^d
\end{equation}
equals a constant. The adjoint operator $L^*=L^*_b$ was defined in (\ref{eq:cgen}), and in the periodic setting considered here the operators $(L, L^*)$ form a Fredholm pair on $L^2(\T^d)$, see p.175f.~in \cite{BJS64}. As a consequence, the inhomogeneous equation
\begin{equation} \label{inhomd}
Lu = f,\quad f \in L^2(\T^d),
\end{equation}
has a solution $u$ if and only if $\langle f, m \rangle_{L^2}=0$ for every  solution $m \in L^2(\T^d)$ of
\begin{equation} \label{homcon}
L^*m = 0 \text{ on } \T^d.
\end{equation}
By the Fredholm property the kernel of $L^*$ has the same dimension as the kernel of $L$ and inspection of the form of $L^*$ shows that the solutions $m \in L^2(\T^d)$ to (\ref{homcon}) are determined up to a normalising constant. It  follows that 
\begin{equation} \label{stark}
L^*m = 0 ~\iff ~ m \in \mathcal K= \{c \mu: c \in \mathbb R\},
\end{equation}
where $\mu > 0$ is the unique solution $m$ (`invariant measure') satisfying $\int_{\T^d} m=1$. Positivity of $\mu$ can be deduced from appropriate heat kernel estimates: in fact (arguing, e.g., as on p.167f. in \cite{N97}) the solution $\mu$ can be seen to be Lipschitz continuous and bounded away from zero on $\T^d$, and $\|\mu\|_{Lip}$ is bounded by a fixed constant that only depends on $d$ and on an upper bound for $\|b\|_\infty$, proving in particular Proposition \ref{as:true_drift}.


We can now state the following basic result for the PDE (\ref{inhomd}).

\begin{lemma}\label{regest}
Let $t \ge 2$ and assume $b \in C^{t-2}(\T^d)$. For any $f \in L^2_\mu(\T^d)$, there exists a unique solution $L_b^{-1}[f] \in  L^2_0(\T^d)$ of equation (\ref{inhomd}) satisfying $L_b L_b^{-1}[f]=f$ almost everywhere. Moreover, $$\|L_b^{-1}[f]\|_{H^t} \lesssim \|f\|_{H^{t-2}},$$
with constants depending on $t,d$ and on an upper bound $B$ for $\|b\|_{B^{t-2}_{\infty \infty}}$.
\end{lemma}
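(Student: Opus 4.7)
The statement packages together existence/uniqueness and an elliptic regularity estimate, so I would prove it in three stages: (i) solvability via the Fredholm machinery already recorded at (\ref{stark}); (ii) a base $L^2 \to H^2$ estimate together with an $L^2$ a priori bound; (iii) a bootstrap in Sobolev scale using the multiplication inequality (\ref{mult}).

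For (i), the operator $L_b$ is strongly elliptic with smooth enough coefficients, so by standard theory (as recalled before the statement and on p.~175f.~of \cite{BJS64}) $L_b: H^t \cap L^2_0(\T^d) \to H^{t-2}(\T^d)$ is Fredholm of index zero, with $\ker L_b = \R$ (constants, by the maximum principle applied to $L_b u = 0$). The adjoint kernel is $\mathcal{K} = \R \mu$ by (\ref{stark}), so the solvability condition for $L_b u = f$ is precisely $\int f \, d\mu = 0$, i.e.\ $f \in L^2_\mu(\T^d)$. Imposing $\int u = 0$ selects a unique representative $L_b^{-1}[f] \in L^2_0(\T^d)$, and when $f \in H^{t-2}$ the solution lies in $H^t$ by interior elliptic regularity (the problem is globally on the torus, so there is no boundary).

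For (ii), the base case $t=2$: rewrite the equation as $\tfrac{1}{2}\Delta u = f - b\cdot \nabla u$. On $\T^d$ the periodic Laplacian satisfies $\|u\|_{H^2} \lesssim \|\Delta u\|_{L^2} + \|u\|_{L^2}$ by an elementary Fourier argument, so
\[
\|u\|_{H^2} \lesssim \|f\|_{L^2} + \|b\|_\infty \|u\|_{H^1} + \|u\|_{L^2} \lesssim \|f\|_{L^2} + C(\|b\|_\infty)\|u\|_{L^2},
\]
after absorbing $\|u\|_{H^1}$ by the interpolation inequality $\|u\|_{H^1} \le \varepsilon \|u\|_{H^2} + C_\varepsilon \|u\|_{L^2}$. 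To finish I need $\|u\|_{L^2} \lesssim \|f\|_{L^2}$. I would obtain this by the standard compactness/contradiction argument: if the bound failed there would be $u_n \in L^2_0$ with $\|u_n\|_{L^2}=1$ and $L_b u_n \to 0$ in $L^2$; the $H^2$-bound just derived (combined with $\|u_n\|_{L^2}=1$) gives a bounded sequence in $H^2$, hence precompact in $L^2$; any limit $u$ lies in $L^2_0$ with $L_b u = 0$, so $u$ is constant and mean zero, i.e.\ $u=0$, contradicting $\|u\|_{L^2}=1$. This is the step I expect to be the main obstacle, because it is the place where the abstract Fredholm structure has to translate into a quantitative $L^2$-estimate; the dependence of the constant on $b$ is absorbed through the $H^2$-estimate.

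For (iii), I would iterate. Assume $u\in H^t$ satisfies $L_b u = f$ with $f \in H^{t-2}$ and $\int u = 0$. The identity $\tfrac12\Delta u = f - b\cdot \nabla u$ combined with $\|u\|_{H^t}\lesssim \|\Delta u\|_{H^{t-2}}+\|u\|_{L^2}$ (Fourier) and the Besov multiplication inequality (\ref{mult}) (using $C^{t-2}\hookrightarrow B^{t-2}_{\infty\infty}$) yields
\[
\|u\|_{H^t} \lesssim \|f\|_{H^{t-2}} + C(\|b\|_{B^{t-2}_{\infty\infty}}) \|u\|_{H^{t-1}} + \|u\|_{L^2}.
\]
Interpolating $\|u\|_{H^{t-1}}\le \varepsilon\|u\|_{H^t}+C_\varepsilon\|u\|_{L^2}$ and absorbing into the left-hand side gives $\|u\|_{H^t}\lesssim \|f\|_{H^{t-2}}+C(\|b\|_{B^{t-2}_{\infty\infty}})\|u\|_{L^2}$, and the $L^2$ bound from step (ii) closes the argument since $\|u\|_{L^2}\lesssim \|f\|_{L^2}\le \|f\|_{H^{t-2}}$. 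The constants depend only on $t,d$ and the stated norm of $b$, as required.
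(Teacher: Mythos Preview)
Your proof is correct and uses the same ingredients as the paper's: Fredholm solvability via (\ref{stark}), the Fourier estimate for the periodic Laplacian, the multiplication inequality (\ref{mult}), and a compactness/contradiction argument to remove the lower-order term. The paper organises things slightly more economically: because $\langle u,1\rangle_{L^2}=0$ one has the sharper bound $\|u\|_{H^t}\lesssim\|\Delta u\|_{H^{t-2}}$ with \emph{no} additive $\|u\|_{L^2}$ term, which together with (\ref{mult}) yields $\|u\|_{H^t}\lesssim\|Lu\|_{H^{t-2}}+\|b\|_{B^{t-2}_{\infty\infty}}\|u\|_{H^{t-1}}$; the contradiction argument is then run once, directly at level $H^t$, so neither the separate $L^2$ base case nor the interpolation step is needed.
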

\begin{proof}
By standard Sobolev space theory and definition of the Laplacian we have for any $u \in \mathcal H \equiv H^t \cap \{u: \langle u, 1 \rangle_{L^2} =0\}$ the inequality 
\begin{equation}\label{laplaceest}
\|u\|_{H^t} \lesssim \|\Delta u\|_{H^{t-2}}.
\end{equation}
Indeed, for $\{e_k: k =(k_1, \dots, k_d) \in \mathbb Z^d\}$ the usual trigonometric basis of $L^2(\T^d)$ we have $\langle u, e_0 \rangle_{L^2}=\langle u, 1 \rangle_{L^2}=0$, $\langle \Delta u, e_k \rangle = -(2\pi)^2\sum_j k_j^2 \langle u, e_k \rangle, $ and $\sup_{k \neq 0} (1+\|k\|^2)/\|k\|^2<\infty$, which gives the result using the characterisation of Sobolev norms in the basis $\{e_k\}$. We then also have, by the triangle inequality and (\ref{mult}),
\begin{equation}\label{initio}
\|u\|_{H^t} \lesssim \|L u\|_{H^{t-2}} + \|b . \nabla u\|_{H^{t-2}} \lesssim \|Lu\|_{H^{t-2}} + \|b\|_{B^{t-2}_{\infty \infty}} \|u\|_{H^{t-1}}
\end{equation}
for all $u \in \mathcal H$, with constants depending on $t,d$. We now deduce from this the inequality
\begin{equation}\label{key}
\|u\|_{H^t} \lesssim \|Lu\|_{H^{t-2}}~ \quad \forall u \in \mathcal H.
\end{equation}
Indeed, if the latter inequality does not hold true, then there exists a sequence $u_m \in \mathcal H$ such that $\|u_m\|_{H^t}=1$ for all $m$ but $\|Lu_m\|_{H^{t-2}} \to 0$ as $m \to \infty$. At the same time, by compactness, $u_m$ converges in $\|\cdot\|_{H^{t-1}}$-norm (if necessary along a subsequence) to some $u\in \mathcal H$ satisfying $Lu=0$. Using (\ref{initio}) with fixed constant depending only on $B, t, d$, we see that $u_m$ is also Cauchy in $H^t$, and its limit must necessarily satisfy $\|u\|_{H^t}=1$. However, as remarked after (\ref{lapl}), the only solution $u \in \mathcal H$ to $Lu=0$ on $\T^d$ equals $u=const=0$, a contradiction to $\|u\|_{H^t}=1$, proving (\ref{key}). 

By the Fredholm property and (\ref{stark}), a solution $u_f$ to (\ref{inhomd}) exists whenever $\int f d\mu = 0$, and for $f \in H^{t-2}(\T^d)$ any such solution belongs to $H^t(\T^d)$ (see Theorem 3.5.3 in \cite{BJS64}, which is proved for smooth $b$, but the proof remains valid for $b \in C^{t-2}(\T^d)$). The weak maximum principle (p.179 in \cite{GT98}) now implies that $u_f$ is unique up to an additive constant, and applying (\ref{key}) to the unique selection $u_f=L^{-1}[f] \in \mathcal H$ completes the proof. 
\end{proof}

We next obtain corresponding results for the adjoint PDE. It follows from (\ref{stark}) that the unique element $m \in \mathcal K$ satisfying $\int_{\T^d} m=0$ must necessarily vanish identically, and we can study the solution operator $(L^*)^{-1}$ of the inhomogeneous adjoint PDE
\begin{equation} \label{cpdeih}
L^* u = f ~\text{on } \T^d,
\end{equation}
which assigns to any $f \in L^2_0(\T^d)$ the unique  solution $u=(L^*)^{-1}[f] \in L^2_0(\T^d)$. Indeed, using the Fredholm property from Section 3.6 in \cite{BJS64} in a reverse way (with $L$ equal to our $L^*$ so that the new $L^*$ is our $(L^*)^*=L$), we see that solutions $u=u_f$ to (\ref{cpdeih}) exist for any periodic $f$ for which $\int f=0$ (since solutions to $Lu=0$ equal constants), and if $u_1, u_2$ are two such solutions, so that $L^*(u_1-u_2)=0$ and $\int u_1 = \int u_2$, then necessarily $u_1=u_2$ by what precedes. 
\begin{lemma}\label{regestc}
Let $t \ge 2$ and assume $b \in C^{t-1}(\T^d)$. Then for any $f \in H^{t-2}(\T^d) \cap L^2_0(\T^d),$ we have $(L_b^*)^{-1}[f] \in H^{t}(\T^d)$ and $$\|(L_b^*)^{-1}[f]\|_{H^t} \lesssim \|f\|_{H^{t-2}},$$
with constants depending on $t,d$ and on an upper bound $B$ for $\|b\|_{B^{t-1}_{\infty \infty}}$. 
\end{lemma}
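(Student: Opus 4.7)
The proof will mirror that of Lemma \ref{regest}, with the adjoint operator $L^*_b = \tfrac{1}{2}\Delta - b\cdot \nabla - \mathrm{div}(b)$ replacing $L_b$. The strategy is to establish the a priori estimate
\begin{equation*}
\|u\|_{H^t} \lesssim \|L_b^* u\|_{H^{t-2}}, \qquad u \in H^t(\T^d) \cap L^2_0(\T^d), \tag{$\star$}
\end{equation*}
and then to combine it with the Fredholm existence statement recalled before the lemma. Starting from the base inequality $\|u\|_{H^t} \lesssim \|\Delta u\|_{H^{t-2}}$ for mean-zero $u$ (proved in the text via the trigonometric basis, see (\ref{laplaceest})), the triangle inequality and the multiplication inequality (\ref{mult}) yield
\begin{equation*}
\|u\|_{H^t} \lesssim \|L_b^* u\|_{H^{t-2}} + \|b\cdot \nabla u\|_{H^{t-2}} + \|\mathrm{div}(b)\, u\|_{H^{t-2}} \lesssim \|L_b^* u\|_{H^{t-2}} + \|b\|_{C^{t-1}} \|u\|_{H^{t-1}}.
\end{equation*}
Note that the factor $\|b\|_{C^{t-1}}$ (rather than $\|b\|_{C^{t-2}}$ as in Lemma \ref{regest}) is \emph{forced} by the presence of $\mathrm{div}(b)$, since (\ref{mult}) applied at regularity $t-2$ requires a $C^{t-2}$-bound for $\mathrm{div}(b)$, i.e. a $C^{t-1}$-bound for $b$. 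This is the only structural difference from the proof of Lemma \ref{regest}.

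To pass from this inequality to ($\star$) I would remove the lower-order term $\|u\|_{H^{t-1}}$ by the same compactness/contradiction argument used to prove (\ref{key}). Suppose ($\star$) fails: then there exists $(u_m) \subset H^t \cap L^2_0$ with $\|u_m\|_{H^t}=1$ and $\|L_b^* u_m\|_{H^{t-2}} \to 0$. By the compact embedding $H^t \hookrightarrow H^{t-1}$, a subsequence converges in $H^{t-1}$ to some $u \in H^{t-1}\cap L^2_0$; plugging back into the a priori inequality (with the fixed constant depending only on $t,d,B$) shows that $(u_m)$ is Cauchy in $H^t$, so $u \in H^t$ with $\|u\|_{H^t}=1$ and $L_b^* u = 0$. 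By (\ref{stark}) the kernel of $L_b^*$ in $L^2$ is $\{c\mu_b : c \in \R\}$, and since $\mu_b>0$ and $\int u = 0$ this forces $u = 0$, a contradiction.

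For existence and regularity of $(L_b^*)^{-1}[f]$ when $f \in H^{t-2} \cap L^2_0$, I would invoke the Fredholm alternative exactly as recalled above Lemma \ref{regestc}: the solvability condition $\int f\cdot 1 = 0$ is satisfied (since $1 \in \ker L_b$), so a unique weak solution $u \in L^2_0(\T^d)$ exists. Standard elliptic regularity bootstrapping (Theorem 3.5.3 of \cite{BJS64}, again valid for $b \in C^{t-1}$ by inspection of the proof) then upgrades $u$ to $H^t(\T^d)$, and applying ($\star$) yields the claimed bound. The main obstacle is really the bookkeeping of regularity thresholds for $b$ in the adjoint setting; once one sees that $b \in C^{t-1}$ is precisely what is needed to handle the $\mathrm{div}(b)$ term via (\ref{mult}), the rest of the argument runs in perfect parallel with Lemma \ref{regest}.
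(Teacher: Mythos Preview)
Your proof is correct and follows essentially the same route as the paper's: derive an analogue of (\ref{initio}) for $L_b^*$, remove the lower-order term by the compactness/contradiction argument, and conclude via Fredholm existence and elliptic regularity. The paper's proof is in fact just a pointer to this outline. Two minor points of comparison: (i) the paper records the a priori inequality in the slightly sharper form (\ref{initiostar}), keeping the $b\cdot\nabla u$ and $\mathrm{div}(b)\,u$ contributions separate as $\|b\|_{B^{t-2}_{\infty\infty}}\|u\|_{H^{t-1}} + \|b\|_{B^{t-1}_{\infty\infty}}\|u\|_{H^{t-2}}$ rather than lumping them into $\|b\|_{C^{t-1}}\|u\|_{H^{t-1}}$; this sharper version is what is later iterated after (\ref{chain}) to bootstrap regularity of $\mu_0$, so it is worth stating in that form. (ii) Using the $B^{t-1}_{\infty\infty}$-norm (via the first inequality in (\ref{mult})) rather than the $C^{t-1}$-norm gives the dependence on $\|b\|_{B^{t-1}_{\infty\infty}}$ claimed in the statement.
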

\begin{proof}
The proof is similar to the one of Lemma \ref{regest} after deriving the basic inequality
\begin{equation}\label{initiostar}
\begin{split}
\|u\|_{H^t} & \lesssim \|L^* u\|_{H^{t-2}} + \|b . \nabla u + div(b) u\|_{H^{t-2}} \\
& \lesssim \|L^*u\|_{H^{t-2}} + \|b\|_{B^{t-2}_{\infty \infty}} \|u\|_{H^{t-1}} + \|b\|_{B^{t-1}_{\infty \infty}} \|u\|_{H^{t-2}}
\end{split}
\end{equation}
in analogy to (\ref{initio}).
\end{proof}

We can also give a version of Lemma \ref{regestc} with $t=0$. Since $(L^*_b)^{-1}[f] \in L^2_0(\T^d)$ we have for all $f \in L^2_0(\T^d)$ and $\bar \phi = \phi - \int \phi d\mu_b$ the estimate
\begin{align} 
\|(L^*_b)^{-1}[f]\|_{L^2} &= \sup_{\|\phi\|_{L^2} \le 1} \left|\int (L^*_b)^{-1}[f] L_b L_b^{-1}[\bar \phi]  \right| = \sup_{\|\phi\|_{L^2} \le 1} \left|\int f L_b^{-1}[\bar \phi ]  \right| \notag \\
&\leq \|f\|_{H^{-2}} \sup_{\|\phi\|_{L^2}\le 1} \|L_b^{-1}[\bar \phi ]\|_{H^2} \lesssim \|f\|_{H^{-2}},\label{negtreg}
\end{align}
where we have used Lemma \ref{regest} with $t=2$ in the last inequality and where the constants in the last inequality depend only on $d$ and on bounds for $\|b\|_{B^1_{\infty \infty}}$ and $\|\mu_b\|_{L^2}$.

\subsubsection{Refinements on the Besov scale}

For the the proofs of Theorems \ref{invabvm} and \ref{invaclt} we need more refined regularity estimates for the solutions of the PDE involved, replacing the Sobolev norms in Lemma \ref{regest} by appropriate Besov norms. The inequality
\begin{equation}\label{besovmap}
\|L_b^{-1}[f]\|_{B^t_{1\infty}} \lesssim \|f\|_{B^{t-2}_{1\infty}}, ~t-2 \ge 0, \forall f \in L^2_{\mu_b}(\T^d),
\end{equation} 
with constants depending on $b$ only via a bound $B$ for $\|b\|_{B^{t-1}_{\infty \infty}}$, is proved in the same way as Lemma \ref{regest}, replacing the basic inequality (\ref{laplaceest}) by its analogue for Besov norms 
\begin{equation}\label{laplacebesov}
\|u\|_{B^t_{1\infty}} \lesssim \|\Delta u\|_{B^{t-2}_{1\infty}} \quad \forall u \in B^t_{1\infty} \cap \{u:\langle u, e_0 \rangle =0\},
\end{equation} 
which is proved as follows: for all $u$ such that $\langle u, e_0 \rangle = 0$ and $\mathbb N_0 = \mathbb N \cup \{0\}$, an equivalent Littlewood-Paley norm on any Besov space $B^r_{1\infty}$ is given by 
\begin{equation*}
\|u\|_{B^r_{1\infty}} = \sup_{j \in \mathbb N_0} 2^{jr} \big\|\sum_{k \in \mathbb Z, k \neq 0} \psi_j(k) \langle u, e_k \rangle e_k \big\|_{L^1(\T^d)},
\end{equation*}
where the $\psi_j = \psi(\cdot/2^j), \supp(\psi) \in (1/2, 2)^d$ form a Littlewood-Paley resolution of unity, see p.162f.~in \cite{ST87}. Then as after (\ref{laplaceest}),
\begin{align*}
\|u\|_{B^t_{1\infty}} &= \sup_{j \in \mathbb N_0} 2^{jt} \big\|\sum_{k \in \mathbb Z, k \neq 0} \frac{1}{4\pi^2 \|k\|^2} \psi_j(k) \langle \Delta u, e_k \rangle e_k \big\|_{L^1(\T^d)} \\
&= \sup_{j \in \mathbb N_0} 2^{j(t-2)} \big\|\sum_{k \in \mathbb Z, k \neq 0} M_j(k)\psi_j(k) \langle \Delta u, e_k \rangle e_k \big\|_{L^1(\T^d)}
\end{align*}
where $M_j= M(\cdot/2^j)$ and $M=\Phi/(4\pi^2\|\cdot\|^2)$ with $\Phi$ a smooth function supported in $(1/4,9/4)^d$ such that $\Phi =1$ on $(1/2,2)^d$. By a standard Fourier multiplier inequality (e.g., Lemma 4.3.27 in \cite{ginenickl2016}, which easily generalises to $d>1$) the last norm can be estimated by
$$\sup_{j \in \mathbb N_0} 2^{j(t-2)} \big\|\sum_{k \in \mathbb Z, k \neq 0} \psi_j(k) \langle \Delta u, e_k \rangle e_k \big\|_{L^1(\T^d)} \times \|F^{-1}M_j\|_{L^1(\mathbb R^d)},$$ where $F^{-1}$ is the inverse Fourier transform. Since $\Phi$ is smooth and supported in $(-1/4,3/4)^d$, both $M$ and $F^{-1}M$ belong to the Schwartz-class $\mathcal S$, so that (\ref{laplacebesov}) follows from $$\sup_j \|F^{-1}[M_j]\|_{L^1(\mathbb R^d)}=\|F^{-1}[M]\|_{L^1(\mathbb R^d)}<\infty.$$ 

\textbf{Acknowledgements.} RN was supported by the European Research Council under ERC grant No. 647812 (UQMSI). We would like to thank James Norris for helpful discussions and the Associate Editor and two referees for helpful comments and for drawing several references to our attention.

\bibliography{diffusion_bvm}{}

\begin{thebibliography}{10}

\bibitem{A18}
{\sc Abraham, K.}
\newblock Nonparametric {B}ayesian posterior contraction rates for scalar
  diffusions with high-frequency data.
\newblock {\em Bernoulli, to appear, arXiv:1802.05635\/} (2018).

\bibitem{AS18b}
{\sc Aeckerle-Willems, C., and Strauch, C.}
\newblock Concentration of scalar ergodic diffusions and some statistical
  implications.
\newblock {\em arXiv:1807.11331\/} (2018).

\bibitem{AS18a}
{\sc Aeckerle-Willems, C., and Strauch, C.}
\newblock Sup-norm adaptive simultaneous drift estimation for ergodic
  diffusions.
\newblock {\em arXiv:1808.10660\/} (2018).

\bibitem{ALS13}
{\sc Agapiou, S., Larsson, S., and Stuart, A.~M.}
\newblock Posterior contraction rates for the {B}ayesian approach to linear
  ill-posed inverse problems.
\newblock {\em Stochastic Process. Appl. 123}, 10 (2013), 3828--3860.

\bibitem{B11}
{\sc Bass, R.~F.}
\newblock {\em Stochastic processes}.
\newblock Cambridge Univ. Press, Cambridge, 2011.

\bibitem{BJS64}
{\sc Bers, L., John, F., and Schechter, M.}
\newblock {\em Partial differential equations}.
\newblock Lectures in Applied Mathematics, Vol. III. Wiley \& Sons, Inc.\, New
  York-London-Sydney, 1964.

\bibitem{BPRF06}
{\sc Beskos, A., Papaspiliopoulos, O., Roberts, G.~O., and Fearnhead, P.}
\newblock Exact and computationally efficient likelihood-based estimation for
  discretely observed diffusion processes.
\newblock {\em J. R. Stat. Soc. Ser. B Stat. Methodol. 68}, 3 (2006), 333--382.

\bibitem{BFS16}
{\sc Bladt, M., Finch, S., and S{\o}rensen, M.}
\newblock Simulation of multivariate diffusion bridges.
\newblock {\em J. R. Stat. Soc. Ser. B. Stat. Methodol. 78}, 2 (2016),
  343--369.

\bibitem{castillo2014}
{\sc Castillo, I.}
\newblock On {B}ayesian supremum norm contraction rates.
\newblock {\em Ann. Statist. 42}, 5 (2014), 2058--2091.

\bibitem{C17}
{\sc Castillo, I.}
\newblock P\'{o}lya tree posterior distributions on densities.
\newblock {\em Ann. Inst. Henri Poincar\'{e} Probab. Stat. 53}, 4 (2017),
  2074--2102.

\bibitem{CN13}
{\sc Castillo, I., and Nickl, R.}
\newblock Nonparametric {B}ernstein--von {M}ises {T}heorems in {G}aussian white
  noise.
\newblock {\em Ann. Statist. 41}, 4 (2013), 1999--2028.

\bibitem{CN14}
{\sc Castillo, I., and Nickl, R.}
\newblock On the {B}ernstein--von {M}ises phenomenon for nonparametric {B}ayes
  procedures.
\newblock {\em Ann. Statist. 42}, 5 (2014), 1941--1969.

\bibitem{CR15}
{\sc Castillo, I., and Rousseau, J.}
\newblock A {B}ernstein--von {M}ises theorem for smooth functionals in
  semiparametric models.
\newblock {\em Ann. Statist. 43}, 6 (2015), 2353--2383.

\bibitem{D05}
{\sc Dalalyan, A.}
\newblock Sharp adaptive estimation of the drift function for ergodic
  diffusions.
\newblock {\em Ann. Statist. 33}, 6 (2005), 2507--2528.

\bibitem{DR06}
{\sc Dalalyan, A., and Rei\ss, M.}
\newblock Asymptotic statistical equivalence for scalar ergodic diffusions.
\newblock {\em Probab. Theory Related Fields 134}, 2 (2006), 248--282.

\bibitem{DR07}
{\sc Dalalyan, A., and Rei\ss, M.}
\newblock Asymptotic statistical equivalence for ergodic diffusions: the
  multidimensional case.
\newblock {\em Probab. Theory Related Fields 137}, 1-2 (2007), 25--47.

\bibitem{DLSV13}
{\sc Dashti, M., Law, K. J.~H., Stuart, A.~M., and Voss, J.}
\newblock {MAP} estimators and their consistency in {B}ayesian nonparametric
  inverse problems.
\newblock {\em Inverse Problems 29}, 9 (2013), 095017, 27.

\bibitem{D02}
{\sc Dudley, R.~M.}
\newblock {\em Real analysis and probability}.
\newblock Cambridge University Press, Cambridge, 2002.

\bibitem{D14}
{\sc Dudley, R.~M.}
\newblock {\em Uniform central limit theorems}, second~ed.
\newblock Cambridge University Press, New York, 2014.

\bibitem{KE86}
{\sc Ethier, S.~N., and Kurtz, T.~G.}
\newblock {\em Markov processes, Characterization and convergence}.
\newblock John Wiley \& Sons, Inc., New York, 1986.

\bibitem{ghosal2000}
{\sc Ghosal, S., Ghosh, J.~K., and van~der Vaart, A.~W.}
\newblock Convergence rates of posterior distributions.
\newblock {\em Ann. Statist. 28}, 2 (2000), 500--531.

\bibitem{GvdV17}
{\sc Ghosal, S., and van~der Vaart, A.~W.}
\newblock {\em Fundamentals of Nonparametric Bayesian Inference}.
\newblock Cambridge University Press, New York, 2017.

\bibitem{GT98}
{\sc Gilbarg, D., and Trudinger, N.~S.}
\newblock {\em Elliptic partial differential equations of second order}.
\newblock Classics in Mathematics. Springer-Verlag, Berlin, 2001.
\newblock Reprint of the 1998 edition.

\bibitem{GN08}
{\sc Gin\'e, E., and Nickl, R.}
\newblock Uniform central limit theorems for kernel density estimators.
\newblock {\em Probab. Theory Related Fields 141}, 3-4 (2008), 333--387.

\bibitem{GN09}
{\sc Gin\'e, E., and Nickl, R.}
\newblock An exponential inequality for the distribution function of the kernel
  density estimator, with applications to adaptive estimation.
\newblock {\em Probab. Theory Related Fields 143}, 3-4 (2009), 569--596.

\bibitem{ginenickl2016}
{\sc Gin\'e, E., and Nickl, R.}
\newblock {\em Mathematical foundations of infinite-dimensional statistical
  models}.
\newblock Cambridge University Press, New York, 2016.

\bibitem{GK18}
{\sc Giordano, M., and Kekkonen, H.}
\newblock {Bernstein-von Mises theorems and uncertainty quantification for
  linear inverse problems}.
\newblock {\em arXiv:1811.04058\/} (2018).

\bibitem{GS14}
{\sc Gugushvili, S., and Spreij, P.}
\newblock Nonparametric {B}ayesian drift estimation for multidimensional
  stochastic differential equations.
\newblock {\em Lith. Math. J. 54}, 2 (2014), 127--141.

\bibitem{horn2013}
{\sc Horn, R.~A., and Johnson, C.~R.}
\newblock {\em Matrix analysis}, second~ed.
\newblock Cambridge University Press, Cambridge, 2013.

\bibitem{KS18}
{\sc Knapik, B., and Salomond, J.-B.}
\newblock A general approach to posterior contraction in nonparametric inverse
  problems.
\newblock {\em Bernoulli 24}, 3 (2018), 2091--2121.

\bibitem{KvdVvZ11}
{\sc Knapik, B., van~der Vaart, A.~W., and van Zanten, J.~H.}
\newblock Bayesian inverse problems with {G}aussian priors.
\newblock {\em Ann. Statist. 39}, 5 (2011), 2626--2657.

\bibitem{K04}
{\sc Kutoyants, Y.~A.}
\newblock {\em Statistical inference for ergodic diffusion processes}.
\newblock Springer Series in Statistics. Springer-Verlag London, Ltd., London,
  2004.

\bibitem{LLL11}
{\sc L\"{o}cherbach, E., Loukianova, D., and Loukianov, O.}
\newblock Penalized nonparametric drift estimation for a continuously observed
  one-dimensional diffusion process.
\newblock {\em ESAIM Probab. Stat. 15\/} (2011), 197--216.

\bibitem{MNP17}
{\sc Monard, F., Nickl, R., and Paternain, G.~P.}
\newblock Efficient nonparametric {B}ayesian inference for {$X$}-ray
  transforms.
\newblock {\em Ann. Statist. 47}, 2 (2019), 1113--1147.

\bibitem{N17}
{\sc Nickl, R.}
\newblock Bernstein-von {M}ises theorems for statistical inverse problems {I}:
  Schr\"odinger equation.
\newblock {\em Journal of the European Mathematical Society, to appear;
  arXiv:1707.01764\/} (2017).

\bibitem{NS17b}
{\sc Nickl, R., and S\"ohl, J.}
\newblock Bernstein-von {M}ises theorems for statistical inverse problems {II}:
  compound {P}oisson processes.
\newblock {\em arXiv:1709.07752\/} (2017).

\bibitem{NS17a}
{\sc Nickl, R., and S\"ohl, J.}
\newblock Nonparametric {B}ayesian posterior contraction rates for discretely
  observed scalar diffusions.
\newblock {\em Ann. Statist. 45}, 4 (2017), 1664--1693.

\bibitem{NvdGW18}
{\sc Nickl, R., van~de Geer, S., and Wang, S.}
\newblock {Convergence rates for penalised least squares estimators in
  PDE-constrained regression problems}.
\newblock {\em arXiv:1809.08818\/} (2018).

\bibitem{N97}
{\sc Norris, J.~R.}
\newblock Long-time behaviour of heat flow: global estimates and exact
  asymptotics.
\newblock {\em Arch. Rational Mech. Anal. 140}, 2 (1997), 161--195.

\bibitem{PPRS12}
{\sc Papaspiliopoulos, O., Pokern, Y., Roberts, G.~O., and Stuart, A.~M.}
\newblock Nonparametric estimation of diffusions: a differential equations
  approach.
\newblock {\em Biometrika 99}, 3 (2012), 511--531.

\bibitem{PV01}
{\sc Pardoux, E., and Veretennikov, A.~Y.}
\newblock On the {P}oisson equation and diffusion approximation. {I}.
\newblock {\em Ann. Probab. 29}, 3 (2001), 1061--1085.

\bibitem{pokern2013}
{\sc Pokern, Y., Stuart, A.~M., and van Zanten, J.~H.}
\newblock Posterior consistency via precision operators for {B}ayesian
  nonparametric drift estimation in {SDE}s.
\newblock {\em Stochastic Process. Appl. 123}, 2 (2013), 603--628.

\bibitem{R13}
{\sc Ray, K.}
\newblock Bayesian inverse problems with non-conjugate priors.
\newblock {\em Electron. J. Stat. 7\/} (2013), 2516--2549.

\bibitem{R14}
{\sc Ray, K.}
\newblock {\em Asymptotic theory for Bayesian nonparametric procedures in
  inverse problems}.
\newblock PhD thesis, University of Cambridge, 2014.

\bibitem{R17}
{\sc Ray, K.}
\newblock Adaptive {B}ernstein--von {M}ises theorems in {G}aussian white noise.
\newblock {\em Ann. Statist. 45}, 6 (2017), 2511--2536.

\bibitem{revuz1999}
{\sc Revuz, D., and Yor, M.}
\newblock {\em Continuous martingales and {B}rownian motion}, third~ed.,
  vol.~293.
\newblock Springer-Verlag, Berlin, 1999.

\bibitem{RR12}
{\sc Rivoirard, V., and Rousseau, J.}
\newblock Bernstein-von {M}ises theorem for linear functionals of the density.
\newblock {\em Ann. Statist. 40}, 3 (2012), 1489--1523.

\bibitem{runst1996}
{\sc Runst, T., and Sickel, W.}
\newblock {\em Sobolev spaces of fractional order, {N}emytskij operators, and
  nonlinear partial differential equations}.
\newblock Walter de Gruyter \& Co., Berlin, 1996.

\bibitem{vdMS17b}
{\sc Schauer, M., van~der Meulen, F., and van Zanten, H.}
\newblock Guided proposals for simulating multi-dimensional diffusion bridges.
\newblock {\em Bernoulli 23}, 4A (2017), 2917--2950.

\bibitem{ST87}
{\sc Schmeisser, H.-J., and Triebel, H.}
\newblock {\em Topics in {F}ourier analysis and function spaces}.
\newblock A Wiley-Interscience Publication. John Wiley \& Sons, Ltd.,
  Chichester, 1987.

\bibitem{S13}
{\sc Schmisser, E.}
\newblock Penalized nonparametric drift estimation for a multidimensional
  diffusion process.
\newblock {\em Statistics 47}, 1 (2013), 61--84.

\bibitem{S15}
{\sc Strauch, C.}
\newblock Sharp adaptive drift estimation for ergodic diffusions: the
  multivariate case.
\newblock {\em Stochastic Process. Appl. 125}, 7 (2015), 2562--2602.

\bibitem{S16}
{\sc Strauch, C.}
\newblock Exact adaptive pointwise drift estimation for multidimensional
  ergodic diffusions.
\newblock {\em Probab. Theory Related Fields 164}, 1-2 (2016), 361--400.

\bibitem{S18}
{\sc Strauch, C.}
\newblock Adaptive invariant density estimation for ergodic diffusions over
  anisotropic classes.
\newblock {\em Ann. Statist. 46}, 6B (2018), 3451--3480.

\bibitem{vdMS17}
{\sc van~der Meulen, F., and Schauer, M.}
\newblock Bayesian estimation of discretely observed multi-dimensional
  diffusion processes using guided proposals.
\newblock {\em Electron. J. Stat. 11}, 1 (2017), 2358--2396.

\bibitem{vdMvZ13}
{\sc van~der Meulen, F., and van Zanten, H.}
\newblock Consistent nonparametric {B}ayesian inference for discretely observed
  scalar diffusions.
\newblock {\em Bernoulli 19}, 1 (2013), 44--63.

\bibitem{vdmeulen2006}
{\sc van~der Meulen, F.~H., van~der Vaart, A.~W., and van Zanten, J.~H.}
\newblock Convergence rates of posterior distributions for {B}rownian
  semimartingale models.
\newblock {\em Bernoulli 12}, 5 (2006), 863--888.

\bibitem{vdvaart1998}
{\sc van~der Vaart, A.~W.}
\newblock {\em Asymptotic statistics}.
\newblock Cambridge Univ. Press, Cambridge, 1998.

\bibitem{vdvaart2008}
{\sc van~der Vaart, A.~W., and van Zanten, J.~H.}
\newblock Rates of contraction of posterior distributions based on {G}aussian
  process priors.
\newblock {\em Ann. Statist. 36}, 3 (2008), 1435--1463.

\bibitem{vanwaaij2016}
{\sc van Waaij, J., and van Zanten, H.}
\newblock Gaussian process methods for one-dimensional diffusions: optimal
  rates and adaptation.
\newblock {\em Electron. J. Stat. 10}, 1 (2016), 628--645.

\bibitem{V10}
{\sc Vershynin, R.}
\newblock Introduction to the non-asymptotic analysis of random matrices.
\newblock {\em arXiv:1011.3027\/} (2010).

\end{thebibliography}
\bibliographystyle{acm}

\end{document}